\crefname{hypothesis}{Hypothesis}{Hypotheses}
\title{A mean field game model of firm--level innovation}
\author{Matt Barker\thanks{Science and Solutions for a Changing Planet DTP, and the Department of Mathematics, Imperial College London, London, UK 
  (\email{m.barker17@imperial.ac.uk}). This work was supported by the Natural Environment Research Council [grant number NE/L002515/1]}
\and Pierre Degond\thanks{Department of Mathematics, Imperial College London, London, UK 
  (\email{p.degond@imperial.ac.uk}). PD acknowledges support by the Engineering and Physical Sciences Research Council (EPSRC) under grants no. EP/M006883/1 and by the Royal Society and the Wolfson Foundation through a Royal Society Wolfson Research Merit Award no. WM130048. PD's affiliation from 01/01/2021 is "Institut de Mathématiques de Toulouse, CNRS \& Université Paul Sabatier, 31062 TOULOUSE, France. MB is supported by the Natural Environment Research Council}
\and Ralf Martin\thanks{Imperial Business School, Imperial College London, London, UK
  (\email{r.martin@imperial.ac.uk}, \email{m.muuls@imperial.ac.uk}).}
\and Mirabelle Mu\^uls}
\DeclareMathOperator{\E}{\mathbb{E}}
\DeclareMathOperator{\R}{\mathbb{R}}
\DeclareMathOperator{\N}{\mathbb{N}}
\begin{document}

\maketitle
\begin{abstract}
    Knowledge spillovers occur when a firm researches a new technology and that technology is adapted or adopted by another firm, resulting in a social value of the technology that is larger than the initially predicted private value. As a result, firms systematically under--invest in research compared with the socially optimal investment strategy. Understanding the level of under--investment, as well as policies to correct it, is an area of active economic research. In this paper, we develop a new model of spillovers, taking inspiration from the available microeconomic data. We prove existence and uniqueness of solutions to the model, and we conduct some initial simulations to understand how indirect spillovers contribute to the productivity of a sector.
\end{abstract}

\begin{AMS}
35Q89, 91B69, 91A16, 49N80 
\end{AMS}

\begin{keywords}
mean field games, knowledge spillovers, innovation model
\end{keywords}

\section{Introduction}
When a business invests in research and development (R\&D), such strategy only takes into account how a potential innovation may increase the investing company's private value. However, other businesses may utilise innovations made by the original investing company to increase their own profits. This is known in economic literature as the knowledge spillover effect. By only considering its private return, businesses systematically undervalue their own innovations and hence under--invest in R\&D, compared with the socially optimal investment level. To counteract the under--investment, governments introduce R\&D subsidy policies for certain sectors of the economy. In order to effectively allocate such subsidies, it is therefore important to understand the extent of under--investment and how it varies between sectors. 

To understand the spillover effect we develop a mean field game (MFG) model of firms distributed heterogeneously between sectors and according to their productivity level, taking into account their microscopic behaviour. From a microeconomic perspective, the size of knowledge spillovers can be inferred from the network of patent citations~\cite{Fung2005}. When an industrial technology is developed, it often gets patented. As part of the patent any previous technology that has been used must be cited. This results in a network of patent citations, where each citation can be used as a proxy for a spillover from one technology to another, so spillover sizes can be evaluated~\cite{Dechezlepretre2013}. In the model we develop, sectors are connected by a graph that is informed by and can be calibrated to the microeconomic network of patent citation data.

A first model of knowledge spillovers, by Cohen and Levinthal~\cite{Levinthal1989}, considered the stock of knowledge of a firm to depend on the amount of investment in R\&D of that firm and the total amount of investment by all other firms, through a mean field--type interaction. Only an initial analysis of the model was conducted in~\cite{Levinthal1989}. A later model, acknowledged in Section 13.2 of~\cite{Acemoglu2009}, started from a macroscopic perspective, hence only the aggregate knowledge of the entire economy was considered and spillovers were assumed to increase the aggregate uniformly. This does not explain how spillovers heterogeneously affect firms. Similar models have also been used to study entrepreneurship and intellectual property rights, such as in~\cite{Acs2012}. There has been particularly extensive research of knowledge spillovers in cross--country models. In such models, a country's own output is aggregated and the knowledge level increases at a rate that depends on the leading country's knowledge level. However, simplifying assumptions are made that may affect their accuracy, such as in~\cite{Eeckhout2002}, where interactions take place in a discrete time setting, or~\cite{Aghion2005,Howitt2002} where the interactions between firms was described only through the evolution of aggregate quantities. In this paper, we use an MFG model to both increase the complexity of the description of firms, and to link firm--level evolution directly to microeconomic data for spillover sizes. There have been several other papers focussing on MFG--type models of knowledge spillovers, see~\cite{Burger2016,Lucas2014}. The Boltzmann model studied in both papers doeas not consider how innovation among firms evolves, nor did it incorporate the microeconomic data related to patent citations in its formulation. As a result, the model studied in this paper can give greater insight into firm--level dynamics.

In this paper, we analyse a stationary MFG model describing the spillover effect. The MFG model describes the long--term behaviour of firms with full anticipation of the future. MFGs were described mathematically by Lasry and Lions~\cite{Lasry2006,Lasry2007}, and simultaneously by Huang, Caines and Malham\'e~\cite{Huang2006} and they build on the work of Aumann and related authors on anonymous games~\cite{Aumann1964,Schmeidler1973}. The novelties of the system we develop are, first, that the distribution dependence enters into the drift term rather than in the cost functional and, second, that we are considering more than one population of agents. Therefore our MFG model can be classed as a multi--population MFG with a non--separable Hamiltonian. There has been some work in both multi--population MFGs (see~\cite{Cirant2015}) and MFG models with non--separable Hamiltonians (see~\cite{Ambrose2018,Ferreira2018}). However, we are aware of no literature for models that display both characteristics, so although our model is one dimensional, its interest reaches beyond this setting. As a result of the novelty of our model, the techniques we use to prove existence and uniqueness are also novel. However, they rely heavily on the ability to write a stationary Fokker--Planck equation in the form of an exponential. This characteristic has previously been used in~\cite{Barker2019} to prove existence and uniqueness in MFG and BRS models in a slightly different framework.

The paper is organised as follows. In Section 2, we develop the spillover model by describing firm behaviour at a microscopic level and formally deriving the mean field limit. In section 3, we describe the MFG problem and prove existence of solutions to it. We also show uniqueness of such solutions holds, provided the coupling strength between sectors is small enough. In Section 4, we provide some deeper insights into the effects of the modelling parameters, through numerical simulations. The first simulations show how parameters describing effects unrelated to spillovers (for example the discount factor, the noise level and the labour efficiency) change the MFG model. Our second group of simulations demonstrate the effect of the spillover network on the model. The spillover network is a sector--level network that aggregates the patent citation network. We show that the effect of a spillover on any sector is a result of all paths to that sector in the associated network, and not just the immediate connections between sectors, which is contrary to the current economic state of the art. Finally, in Section 5, we briefly discuss future research prospects for the model, including how we intend to apply the model to economic questions relevant to R\&D subsidy policy.

\section{Model development} \label{sec:mfg}
\subsection{The microscopic model} \label{sec:mfg-micro-model}
\subsubsection*{Firms}
Assume there are $L$ sectors within the economy, and in sector $\ell$ there are $N_{\ell}$ firms. We assume firm $i$ in group $\ell$ has $s^{\ell,\ell'}_{i,j}$ links with firm $j$ in group $\ell'$, where $s^{\ell,\ell'}_{i,j}$ is a random variable, taking a value $s \in \N$ with probability $\frac{1}{N_{\ell'}} p(\ell,\ell',s)$. The $i^{th}$ firm in sector $\ell$ has a productivity level $Z_{\ell,i} \in \Omega = (0,\bar{z})$, which increases as a result of employing labour $h_{\ell,i}$ or due to knowledge spillovers from firms that they are linked with. The productivity dynamics are also affected by noise with strength $\sigma \in (0,\infty)$. As a result, $Z_{\ell,i}$ evolves according to the following SDE
\begin{subequations} \label{eq:mfg-micro-dyn}
    \begin{align}
        & dZ_{\ell,i}(t) = \left( \left( h_{\ell,i}(t) \right)^{\gamma} + \frac{1}{N} \sum_{\ell' = 1}^L \sum_{j = 1}^{N_{\ell'}} s^{\ell,\ell'}_{i,j} Z_{\ell',j}(t) \right) dt + \sigma dB_{\ell,i}(t) \\
        & \mathcal{L}\left(Z_{\ell,i}(0)\right) = m_{\ell}^0 \, , \label{eq:mfg-micro-dyn-init}
    \end{align}
\end{subequations}
where $N = \sum_{\ell = 1}^L N_{\ell}$, $B_{\ell,i}$ is an independent Brownian motion with reflection at boundaries 0 and $\bar{z}$, and $\gamma \in (0,1)$ represents the inefficiency in converting one unit of labour to one unit of knowledge. In the initial condition~\eqref{eq:mfg-micro-dyn-init}, $\mathcal{L}\left(Z_{\ell,i}(0)\right)$ denotes the law of the random variable $Z_{\ell,i}(0)$ and $m_{\ell}^0$ is an initial distribution, which may be different for each sector. We assume firms produce a quantity of differentiated good at a rate $q_{\ell,i}$ according to the production function
\begin{equation} \label{eq:productivity-to-goods}
    q_{\ell,i} = Z_{\ell,i} \, .
\end{equation}
Each firm sells their product at a market--determined price $r_{\ell,i}$ and maximises their profit subject to the other firms' decisions. Each agent's profit functional is given by
\begin{equation} \label{eq:micro-profit}
   J_{\ell,i}(h) = \E \left[ \int_0^\infty \left( r_{\ell,i}(t) q_{\ell,i}(t) - w h_{\ell,i}(t) \right) e^{- \rho t}~dt \right] \, ,
\end{equation}
where $h = \left( \left( h_{\ell,i} \right)_{i = 1}^{N_{\ell}} \right)_{\ell = 1}^L$. The wage, $w$, and the discount rate, $\rho$, are given constants.

\subsubsection*{Consumers}
Assume there is a representative consumer with preferences given by $Q = \frac{1}{N} \sum_{\ell = 1}^L\sum_{i = 1}^{N_{\ell}} q_{\ell,i}^{\alpha}$, the Dixit--Stiglitz constant elasticity of substitution (CES) form, and with average income $Y$. The value $\alpha \in (0,1)$ is related to the elasticity of substitution. The demand for each variety can be found by maximising $Q$ under the budget constraint that average expenditure is equal to average income, i.e. $\frac{1}{N}{\sum_{\ell = 1}^L\sum_{i = 1}^{N_{\ell}} r_{\ell,i} q_{\ell,i} = Y}$. This gives
\begin{equation} \label{eq:consumer-pricing}
    q_{\ell,i} = B r_{\ell,i}^{\frac{1}{\alpha - 1}} \, , \quad B = Y R^{\frac{\alpha}{1 - \alpha}} \, , \quad R = \left( \frac{1}{N} \sum_{\ell = 1}^L\sum_{i = 1}^{N_{\ell}} r_{\ell,i}^{\frac{\alpha}{\alpha - 1}} \right)^{\frac{\alpha - 1}{\alpha}} \, .
\end{equation}
For the purposes of the firm--level optimisation problem, we assume $R$ is fixed, in that it can't be changed by any individual firm --- this becomes true as $N_{\ell} \to \infty$ for each $\ell$. For the mathematical analysis we assume $B$ to be a fixed constant, which simplifies matters and enhances the model's relevant features. Later in the numerical simulations $B$ will be determined as the solution to a fixed point problem, which endogenises the price formation through the interaction between firms and consumers.

\subsubsection*{Firm profits revisited}
Now, the profit functional~\eqref{eq:micro-profit} can be rewritten as $J_{\ell,i}(h) = \E \left[ \int_0^\infty \left( \frac{\left(Z_{\ell,i}(t)\right)^{\alpha}}{B^{\alpha - 1}} - w h_{\ell,i}(t) \right) e^{- \rho t}~dt \right]$, using the consumer behaviour~\eqref{eq:consumer-pricing} and the production function~\eqref{eq:productivity-to-goods}.

\subsection{Mean field limit}
When there are large numbers of firms in each sector, the microscopic model developed in Section~\ref{sec:mfg-micro-model} can become intractable. Instead, we assume the number of firms in each sector, $N_{\ell}$, goes to infinity while $\frac{N_{\ell}}{N} \to A_{\ell}$ for some $A_{\ell} \in (0,1)$, which represents the proportion of firms in sector ${\ell}$. In order to derive the limiting mean field model, we first define the empirical distributions for each sector ${\ell} = 1, \ldots, L$ by $m^{N_{\ell}}_{\ell} = \frac{1}{N_{\ell}} \sum_{i = 1}^{N_{\ell}} \delta_{Z_{\ell,i}}$, where $\delta_{Z_{\ell,i}}$ is a Dirac delta at the point $Z_{\ell,i}$. We can then rewrite the dynamics~\eqref{eq:mfg-micro-dyn} using $m^{N_{\ell}}_{\ell}$ as
\[ \begin{aligned}
    & dZ_{\ell,i}(t) = \left( \left( h_{\ell,i}(t) \right)^{\gamma} + \sum_{\ell' = 1}^L \frac{(N_{\ell'})^2}{N} \int_{\Omega} z'~dm^{N_{\ell'}}_{\ell'}(z',t) \frac{1}{N_{\ell'}} \sum_{j = 1}^{N_{\ell'}} s^{\ell,\ell'}_{i,j} \right) dt + \sigma dB_{\ell,i}(t) \\
    & \mathcal{L}\left(Z_{\ell,i}(0)\right) = m_{\ell}^0 \, ,
\end{aligned} \]
Assuming $ m^{N_{\ell}}_{\ell}$ has a limit, $m_{\ell}$, as $N_{\ell} \to \infty$ then, in the limiting model, a representative firm in sector ${\ell}$ evolves according to the SDE
\begin{subequations} \label{eq:mean-field-dynamics}
    \begin{align}
    & dZ_{\ell}^{h,m}(t) = \left( \left( h_{\ell}(t) \right)^{\gamma} + \sum_{\ell' = 1}^L A_{\ell'} p(\ell,\ell') \int_{\Omega} z'~dm_{\ell'}(z',t) \right) dt + \sigma dB_{\ell}(t) \\
    & \mathcal{L}\left(Z_{\ell}^{h,m}(0)\right) = m_{\ell}^0 \, ,
    \end{align}
\end{subequations}
where, by the law of large numbers, $p(\ell,\ell') = \sum_{s = 0}^{\infty} p(\ell,\ell',s)$.  The corresponding profit functional is
\begin{equation} \label{eq:mean-field-profit}
    J_{\ell}(h;m) = \E \left[ \int_0^\infty \left( \frac{\left(Z_{\ell}^{h,m}(t) \right)^{\alpha}}{B^{\alpha - 1}} - w h_{\ell}(t) \right) e^{- \rho t}~dt \right] \, .
\end{equation}
If all firms act in the same way as the representative firm, then the distribution of firms with respect to productivity level is given by a system of $L$ Fokker--Planck equations
\begin{subequations} \label{eq:mfg-FP-gen}
    \begin{align}
        & \partial_t m_{\ell} = - \partial_z \left[ \left( \left( h_{\ell} \right)^{\gamma} + \sum_{\ell' = 1}^L A_{\ell'} p(\ell,\ell') \int_{\Omega} z'~dm_{\ell'}(z',t) \right) m_{\ell} \right] + \frac{\sigma^2}{2} \partial_{zz}^2 m_{\ell} \\
        & \left. - \left( \left( h_{\ell} \right)^{\gamma} + \sum_{\ell' = 1}^L A_{\ell'} p(\ell,\ell') \int_{\Omega} z'~dm_{\ell'}(z',t) \right) m_{\ell} + \frac{\sigma^2}{2} \partial_z m_{\ell} \right|_{z = 0, \bar{z}} = 0 \\
        & m_{\ell}(z,0) = m_{\ell}^0(z) \, .
    \end{align}
\end{subequations}

\section{The MFG model} \label{sec:mfg-mean-field-form}
\subsection{Problem formulation}
The MFG problem is related to the search for Nash equilibria in the optimisation of the profit functional~\eqref{eq:mean-field-profit} while agents evolve according to the dynamics~\eqref{eq:mean-field-dynamics}.

\begin{definition}
    The MFG problem is to find a pair $(h^*,m^*)$, where $h^* = \left( h^*_{\ell} \right)_{\ell = 1}^L$ is a sequence of controls and $m^* = \left( m^*_{\ell} \right)_{\ell = 1}^L$ is a sequence of probability distributions on $\bar{\Omega}$, such that for any other sequence of controls $h$ and every $\ell$
    \begin{subequations}
        \begin{align}
            J_{\ell}(h_{\ell}^*,m^*) \geq J_{\ell}(h_{\ell},m^*) \label{eq:mfg-opt-prob} \\
            \text{and } m_{\ell}^* = \mathcal{L}\left( Z_{\ell}^{h^*,m^*} \right) \, . \label{eq:mfg-consist-prob}
        \end{align}
    \end{subequations}
    Such a distribution is called an MFG equilibrium.
\end{definition}

To find an MFG equilibrium we first describe the Hamilton--Jacobi--Bellman (HJB) PDE related to the optimisation part of the problem~\eqref{eq:mfg-opt-prob}. Then we couple the HJB PDE to the Fokker--Planck PDE~\eqref{eq:mfg-FP-gen} to solve the consistency part~\eqref{eq:mfg-consist-prob}. We start by defining $L$ Hamiltonians $H_{\ell}:\Omega \times \left( H^1 (\Omega) \right)^L \times \R \to \R$, for $\ell = 1, \ldots L$ as 
\begin{equation} \label{eq:mfg-ham}
    \begin{aligned}
        & H_{\ell}(z,m,\lambda) = \sup_{h \geq 0} \left( h^{\gamma} + \sum_{\ell'=1}^L A_{\ell'} p(\ell,\ell') \int_{\Omega} z' m_{\ell'}(z')~dz'\right) \lambda + \frac{z^{\alpha}}{B^{\alpha - 1}} - w h \\
        & = (1 - \gamma) \left( \frac{\gamma}{w} \right)^{\frac{\gamma}{1 - \gamma}} \max(0,\lambda)^{\frac{1}{1 - \gamma}} + \lambda \sum_{\ell'=1}^L A_{\ell'} p(\ell,\ell') \int_{\Omega} z' m_{\ell'}(z')~dz' + \frac{z^{\alpha}}{B^{\alpha - 1}}\, ,
    \end{aligned}
\end{equation}
where $z \in \Omega$ is productivity, $m = \left(m_{\ell}\right)_{\ell = 1}^L$ is a distribution of firms in each sector and $\lambda$ is an adjoint variable. The optimal control is given by $h_{\ell}^* = \left( \frac{\gamma}{w} \max(0,\lambda) \right)^{\frac{1}{1 - \gamma}}$, for $\ell = 1, \ldots, L$. Then we define the running profit $V_{\ell}(z,t)$, for $\ell = 1, \ldots, L$, by
\begin{equation} \label{eq:mfg-value-func}
        V_{\ell}(z,t) = \sup_{h_{\ell}} \mathbb{E} \left[ \left. \int_t^\infty \left( \frac{\left(Z_{\ell}(s)\right)^{\alpha}}{B^{\alpha - 1}} - w h_{\ell}(z) \right) e^{- \rho (s - t)}~ds \right|  Z_{\ell}(t) = z \right] \, ,
\end{equation}
where $Z_{\ell}(s)$ follows~\eqref{eq:mean-field-dynamics}. If we let the equilibrium distribution be given by $m_{\ell}$ (for $\ell = 1, \ldots, L)$, then the MFG PDE system is stationary and given by
\begin{subequations} \label{eq:mfg-model}
    \begin{align}
        & V_{\ell} \in H^1(\Omega) \\
        & m_{\ell} \in H^1(\Omega) \\
        & -\frac{\sigma^2}{2} V_{\ell}'' + \rho V_{\ell} - H_{\ell} \left( z,m,V_{\ell}' \right) = 0 \label{eq:mfg-hjb} \\
        & - \frac{\sigma^2}{2} m_{\ell}'' + \left( \partial_{\lambda} H_{\ell} \left( z,m,V_{\ell}' \right) m_{\ell} \right)' = 0 \label{eq:mfg-fp} \\
        & \left. V_{\ell}' \right|_{z = 0,\bar{z}} = 0 \label{eq:mfg-hjb-bc} \\
        & \left. - \frac{\sigma^2}{2} m_{\ell}' + \partial_{\lambda} H_{\ell} \left( z,m,0 \right) m_{\ell} \right|_{z = 0,\bar{z}} = 0 \label{eq:mfg-fp-bc} \\
        & \int_{\Omega} m_{\ell}(z)~dz = 1 \, . \label{eq:mfg-fp-ic}
    \end{align}
\end{subequations}
It can be shown, using either the dynamic programming principle (c.f~\cite{Tembine2017}) or the stochastic maximum principle (c.f.~\cite{Delarue2018}), that $V_{\ell}(z)$, as defined by~\eqref{eq:mfg-value-func}, satisfies the HJB equation~\eqref{eq:mfg-hjb},~\eqref{eq:mfg-hjb-bc}. The Fokker--Planck system~\eqref{eq:mfg-fp},~\eqref{eq:mfg-fp-bc},~\eqref{eq:mfg-fp-ic} comes from the distribution in the previous section~\eqref{eq:mfg-FP-gen} and the consistency condition~\eqref{eq:mfg-consist-prob}.

\subsection{Existence and uniqueness of solutions to the MFG}
\begin{definition}
    A solution to the innovation MFG model~\eqref{eq:mfg-model} is defined to be a tuple $(m,V) = (m_1,\ldots, m_L, V_1,\ldots, V_L)$ such that $m_{\ell}:\Omega \to (0,\infty)$, $V_{\ell}: \Omega \to \R$ satisfy~\eqref{eq:mfg-model} in the weak sense for each $\ell = 1, \ldots, L$.
\end{definition}

\begin{theorem}
    There exists a solution $(m,V) \in \left[C^2(\Omega)\cap C^1\left(\bar{\Omega}\right)\right]^{2L}$ to~\eqref{eq:mfg-model}. Furthermore, if $\sum_{\ell' = 1}^L A_{\ell'} p(\ell,\ell'    )$ is small enough for every $\ell = 1, \ldots L$, then the solution is unique.
\end{theorem}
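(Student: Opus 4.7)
The plan is to exploit the fact that the sector coupling in~\eqref{eq:mfg-ham} depends on $m$ only through the $L$ scalar first moments, reducing the whole MFG to a finite--dimensional fixed point on a compact convex set. Specifically, for $\bar{\mathbf z}=(\bar z_1,\dots,\bar z_L) \in K := [0,\bar z]^L$, set $c_\ell(\bar{\mathbf z}) := \sum_{\ell'=1}^L A_{\ell'} p(\ell,\ell')\bar z_{\ell'}$; with these constants frozen, the HJB equations~\eqref{eq:mfg-hjb}--\eqref{eq:mfg-hjb-bc} decouple into $L$ independent semilinear Neumann problems of the form
\[ -\tfrac{\sigma^2}{2}V_\ell'' + \rho V_\ell = (1-\gamma)(\gamma/w)^{\gamma/(1-\gamma)} (V_\ell')_+^{1/(1-\gamma)} + c_\ell V_\ell' + z^\alpha/B^{\alpha-1},\qquad V_\ell'(0)=V_\ell'(\bar z)=0. \]

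First I would solve each of these decoupled HJBs. The constant $\underline{V}_\ell \equiv 0$ is a subsolution (its right-hand side is nonnegative while its left-hand side vanishes, and the boundary condition holds trivially), and $\overline{V}_\ell \equiv \bar z^\alpha/(\rho B^{\alpha-1})$ is a supersolution, so monotone iteration produces a classical solution $V_\ell$ with $L^\infty$ bounds independent of $\bar{\mathbf z}\in K$. A standard gradient estimate exploiting the Neumann condition (Bernstein's method, or integration of the equation against $V_\ell$) then yields a uniform $C^1$ bound that tames the superlinear term $(V_\ell')_+^{1/(1-\gamma)}$ and lets Schauder theory upgrade this to uniform $C^{2,\alpha}$ estimates. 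Next, because the interior Fokker--Planck equation~\eqref{eq:mfg-fp} is exactly the $z$--derivative of the flux appearing in~\eqref{eq:mfg-fp-bc}, the no--flux boundary condition forces the flux to vanish everywhere, producing the explicit exponential ansatz
\[ m_\ell(z) = Z_\ell^{-1}\exp\!\Bigl(\tfrac{2}{\sigma^2}\!\int_0^z\!\bigl[(\gamma/w)^{\gamma/(1-\gamma)}(V_\ell'(\zeta))_+^{\gamma/(1-\gamma)}+c_\ell\bigr]d\zeta\Bigr), \]
with $Z_\ell$ the normalisation enforcing~\eqref{eq:mfg-fp-ic}. Defining $T:K\to K$ by $T(\bar{\mathbf z})_\ell := \int_\Omega z' m_\ell(z')\,dz'$, the uniform $C^1$ bound on $V_\ell$ keeps the exponent bounded and $Z_\ell$ bounded away from zero, so $T$ is continuous; Brouwer's fixed point theorem then produces $\bar{\mathbf z}^*\in K$ with $T(\bar{\mathbf z}^*)=\bar{\mathbf z}^*$, and the associated pair $(V^*,m^*)$ solves~\eqref{eq:mfg-model}.

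For uniqueness under the smallness assumption, I would upgrade these continuity statements to quantitative Lipschitz bounds. Subtracting the HJBs for two parameter values $c_\ell,\tilde c_\ell$, linearising the superlinear term via the mean value theorem and the a priori $C^1$ bound, and applying the maximum principle yields $\|V_\ell-\tilde V_\ell\|_{C^1}\le C_1|c_\ell-\tilde c_\ell|$; substituting into the exponential formula gives $\|m_\ell-\tilde m_\ell\|_{L^1}\le C_2(\|V_\ell-\tilde V_\ell\|_{C^1}+|c_\ell-\tilde c_\ell|)$, and combining these with $|c_\ell-\tilde c_\ell|\le \bigl(\sum_{\ell'} A_{\ell'}p(\ell,\ell')\bigr)\|\bar{\mathbf z}-\tilde{\bar{\mathbf z}}\|_\infty$ produces
\[ \|T(\bar{\mathbf z})-T(\tilde{\bar{\mathbf z}})\|_\infty \le C\bar z\,\max_\ell\sum_{\ell'=1}^L A_{\ell'} p(\ell,\ell')\,\|\bar{\mathbf z}-\tilde{\bar{\mathbf z}}\|_\infty, \]
so once the row sums are small enough $T$ is a strict contraction on $K$ and Banach's theorem gives a unique fixed point, hence a unique MFG equilibrium. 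The principal obstacle is the HJB analysis: the nonlinearity $(V_\ell')_+^{1/(1-\gamma)}$ is super--linear in $V_\ell'$ (the exponent exceeds one), so both the uniform $C^1$ a priori bound and the Lipschitz dependence of $V_\ell$ on $c_\ell$ require careful use of the Neumann boundary condition and the monotone sub-/super-solution structure; by contrast, the exponential ansatz handles the Fokker--Planck half of the coupling essentially for free.
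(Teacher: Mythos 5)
Your strategy matches the paper's: both reduce the MFG to a finite--dimensional fixed point (you parameterize by first moments $\bar{\mathbf z}\in[0,\bar z]^L$, the paper by the coupling constants $k_\ell = \sum_{\ell'} A_{\ell'}p(\ell,\ell')\bar z_{\ell'}$; these are equivalent up to a linear change of variable), both solve the decoupled HJBs by upper/lower solutions, both read off the Fokker--Planck solution from the no--flux exponential ansatz, and both use Brouwer for existence and a contraction argument for uniqueness under a smallness condition on the row sums.

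There is, however, a genuine gap in your uniqueness step, precisely where you claim the Fokker--Planck half is ``essentially for free.'' To obtain $\|m_\ell-\tilde m_\ell\|_{L^1}\le C\|V_\ell'-\tilde V_\ell'\|_\infty$, you must compare the exponents $\int_0^z (V_\ell')^{\gamma/(1-\gamma)}\,d\zeta$ and $\int_0^z(\tilde V_\ell')^{\gamma/(1-\gamma)}\,d\zeta$. When $\gamma\ge \tfrac12$ the map $t\mapsto t^{\gamma/(1-\gamma)}$ is $C^1$ on $[0,\infty)$ and the mean value theorem with the uniform $C^1$ bound gives the Lipschitz estimate you want. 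But when $\gamma<\tfrac12$ the exponent $\gamma/(1-\gamma)<1$, so $t\mapsto t^{\gamma/(1-\gamma)}$ is only H\"older continuous (infinite derivative) at $t=0$; since $V_\ell'$ vanishes at both endpoints of $\Omega$ by the Neumann condition, a naive mean--value estimate either degenerates or produces only a H\"older modulus, not the Lipschitz bound your contraction needs. The way out is a boundary barrier: one must show $V_\ell''(0)>0>V_\ell''(\bar z)$ (plus strict positivity of $V_\ell'$ in the interior, with both statements uniform in the frozen constant), which gives $V_\ell'(z)\gtrsim\min(z,\bar z-z)$ and makes $\int_0^{\bar z}\bigl[\min(V_\ell',\tilde V_\ell')\bigr]^{(2\gamma-1)/(1-\gamma)}\,dz$ finite, thereby rescuing the Lipschitz estimate. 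This non--obvious regularity statement about the second derivative at the boundary is where a substantial part of the real work lies, and your sketch does not account for it.

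A smaller point: your Brouwer step requires continuity of $T$ on $K$, for which, for the same reason, you should check that the normalizing constant $Z_\ell$ is continuous in the frozen moment vector; this follows from the $L^\infty$ continuity of $V_\ell'$ in the parameter together with the a priori $C^1$ bound, and does not require the finer boundary analysis, so that part of the proposal does go through.
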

\begin{proof}[Proof outline]
   As noted in the introduction, this proof is based on the proof of existence and uniqueness in~\cite{Barker2019}. The proof presented here has some technical differences compared with the one in~\cite{Barker2019}, hence it is reproduced in full. However, it follows a similar framework and so we do not claim the proof to be new. First, for $k \in [0,\infty)$ we introduce an auxiliary system of PDEs defined by
    \begin{subequations} \label{eq:mfg-alt-model-HJB}
        \begin{align}
            & V^k \in H^1(\Omega) \label{eq:mfg-alt-vreg} \\
            & - \frac{\sigma^2}{2} \left(V^k\right)'' + \rho V^k - H^k\left(z,\left(V^k\right)'\right) = 0 \label{eq:mfg-alt-HJB} \\
            & \left. \left(V^k\right)' \right|_{z = 0,\bar{z}} = 0 \label{eq:mfg-alt-HJB-bc} \, ,
        \end{align}
    \end{subequations}
    \begin{subequations} \label{eq:mfg-alt-model-FP}
        \begin{align}
            & m^k \in H^1(\Omega) \label{eq:mfg-alt-mreg} \\
            & - \frac{\sigma^2}{2} \left(m^k\right)'' + \left( \left[ \left( \frac{\gamma}{w} \max(0,\left(V^k\right)') \right)^{\frac{\gamma}{1 - \gamma}} + k \right] m^k \right)' = 0  \label{eq:mfg-alt-FP} \\
            & \left. - \frac{\sigma^2}{2} \left(m^k\right)' + k m^k \right|_{z = 0,\bar{z}} = 0  \label{eq:mfg-alt-FP-bc} \\
            & \int_{\Omega} m^k(z)~dz = 1 \, ,  \label{eq:mfg-alt-FP-ic}
        \end{align}
    \end{subequations}
    where $H^k(z,\lambda) = (1 - \gamma)\left( \frac{\gamma}{w} \right)^{\frac{\gamma}{1 - \gamma}} \left( \max \left(0, \lambda\right) \right)^{\frac{1}{1 - \gamma}} + k \lambda + \frac{z^{\alpha}}{B^{\alpha - 1}}$. We use a modified version of upper and lower solutions (c.f.~\cite{Schmitt1978}) to prove existence and uniqueness of a weak solution $V^k$ to~\eqref{eq:mfg-alt-model-HJB} for any $k \in [0,\infty)$, and use elliptic regularity theory to show $V^k \in C^2(\Omega) \cap C^1\left(\bar{\Omega}\right)$. Next we define $m^k = \frac{1}{\left\| \bar{m}^k \right\|_1} \bar{m}^k$, where $\bar{m}^k = e^{\frac{2}{\sigma^2} \left( k z + \int_0^z \left( \frac{\gamma}{w} \max\left(0,\left(V^k\right)'\right) \right)^{\frac{\gamma}{1 - \gamma}} dy \right)}$ and $ \left\| \bar{m}^k \right\|_1 = \int_{\Omega} \bar{m}^k dz$, for $k \in [0,\infty)$. We prove that $m^k \in C^2(\Omega) \cap C^1\left(\bar{\Omega}\right)$ and that $m^k$ is the unique solution of~\eqref{eq:mfg-alt-model-FP}. Finally, we define a map $\Phi:[0,\infty)^L \to [0,\infty)^L$ by $\Phi_{\ell}(k) = \sum_{\ell' = 1}^L A_{\ell'} p(\ell,\ell') \int_{\Omega} z m^{k_{\ell'}}(z)~dz$ for$\ell = 1, \ldots, L$, and using the Brouwer fixed point theorem we prove there exists $\bar{k} \in [0,\infty)^L$ such that $\Phi\left(\bar{k}\right) = \bar{k}$. We use the contraction mapping theorem to prove uniqueness under certain smallness assumptions for the data. Then it follows, by replacing $\bar{k}_{\ell}$ with $\Phi\left(\bar{k}_{\ell}\right)$ in~\eqref{eq:mfg-alt-model-HJB} and~\eqref{eq:mfg-alt-model-FP}, that $\left(m^{\bar{k}},V^{\bar{k}}\right) = \left(m^{\bar{k}_1}, \ldots, m^{\bar{k}_L}, V^{\bar{k}_1}, \ldots, V^{\bar{k}_L}\right)$ is a (unique) solution to~\eqref{eq:mfg-model} with the required regularity.
\end{proof}

\subsubsection*{Solutions to the auxiliary HJB PDE}
\begin{theorem} \label{thm:mfg-hjb-xu}
    There exists a unique solution $V^k \in C^{2,\tau}\left(\bar{\Omega}\right)$ to the auxiliary HJB PDE~\eqref{eq:mfg-alt-model-HJB} for any $k \in [0,\infty)$ and some $\tau \in (0,1)$, where $C^{2,\tau}\left(\bar{\Omega}\right)$ is the set of $C^2$ functions on $\bar{\Omega}$ whose second derivative is H\"older continuous with exponent $\tau$. Furthermore, $0 \leq V^k \leq \frac{\bar{z}^{\alpha}}{\rho B^{\alpha - 1}}$
\end{theorem}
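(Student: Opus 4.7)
My plan is to construct $V^k$ by the classical method of upper and lower solutions, upgrade regularity by an elliptic bootstrap, and prove uniqueness via a linearized comparison argument that exploits the Hopf maximum principle.

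\textbf{Step 1: explicit sub/super-solutions.} I would propose the constants $\underline{V} \equiv 0$ and $\overline{V} \equiv \bar{z}^{\alpha}/(\rho B^{\alpha-1})$. Both have vanishing derivative, so the Neumann condition~\eqref{eq:mfg-alt-HJB-bc} is satisfied with equality. Substituting $\underline{V}$ into the left-hand side of~\eqref{eq:mfg-alt-HJB} gives $-H^k(z,0) = -z^{\alpha}/B^{\alpha-1} \leq 0$, while $\overline{V}$ gives $\rho \overline{V} - H^k(z,0) = (\bar{z}^{\alpha}-z^{\alpha})/B^{\alpha-1} \geq 0$ on $\Omega$. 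Hence they form a sub-/super-solution pair, and any solution sandwiched between them automatically satisfies the pointwise bound announced in the theorem.

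\textbf{Step 2: existence between the barriers.} Following the Schmitt~\cite{Schmitt1978} framework, I would first truncate the dependence of $H^k$ on $\lambda$ at some large level $M$ (to tame the superlinear growth $\lambda^{1/(1-\gamma)}$, since $1/(1-\gamma) > 1$), then solve the linearized, damped Neumann problem with a sufficiently large constant $C>0$ by Lax--Milgram and iterate. The monotonicity given by the sub/super-solution pair of Step 1 keeps the iterates in $[\underline{V}, \overline{V}]$, and a Schauder fixed point (or a monotone iteration) produces a weak $H^1$ solution $V^k$ to the truncated problem with $0 \leq V^k \leq \overline{V}$.

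\textbf{Step 3: gradient bound and $C^{2,\tau}$ regularity.} To remove the truncation, I would derive an a priori bound on $W = (V^k)'$. Differentiating~\eqref{eq:mfg-alt-HJB} gives
\begin{equation*}
-\tfrac{\sigma^2}{2} W'' - \partial_\lambda H^k(z, W)\, W' + \rho W = \partial_z H^k(z, W) = \frac{\alpha z^{\alpha-1}}{B^{\alpha-1}},
\end{equation*}
with Dirichlet data $W(0) = W(\bar z) = 0$ inherited from~\eqref{eq:mfg-alt-HJB-bc}. A one-dimensional maximum-principle (Bernstein-type) argument using the $L^\infty$ bound on $V^k$ and the coercivity of $H^k$ in $\lambda$ for large positive $\lambda$ yields $W \in L^\infty$, which justifies taking $M$ arbitrarily large and removing the truncation. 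Once $(V^k)' \in L^\infty$, the nonlinearity $H^k(z, (V^k)')$ is bounded; Calder\'on--Zygmund then gives $V^k \in W^{2,p}(\Omega)$ for every $p < \infty$, hence $V^k \in C^{1,\beta}(\bar\Omega)$ for any $\beta \in (0,1)$; in turn, $H^k(z, (V^k)')$ is H\"older continuous in $z$, and classical Schauder theory (Gilbarg--Trudinger) upgrades $V^k$ to $C^{2,\tau}(\bar\Omega)$ for some $\tau \in (0,1)$.

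\textbf{Step 4: uniqueness.} If $V_1, V_2$ are two such solutions and $W = V_1 - V_2$, the fundamental theorem of calculus gives
\begin{equation*}
-\tfrac{\sigma^2}{2} W'' + \rho W - b(z)\, W' = 0, \qquad b(z) = \int_0^1 \partial_\lambda H^k\bigl(z, V_2' + s(V_1' - V_2')\bigr)\, ds,
\end{equation*}
with $W'(0) = W'(\bar z) = 0$ and $b \in L^\infty$ by Step 3. Since $\rho > 0$, any interior extremum of $W$ forces $W \leq 0$ (resp.\ $W \geq 0$) at that point, and Hopf's lemma rules out boundary extrema under the Neumann condition. Applied to $W$ and $-W$, this forces $W \equiv 0$. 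The main obstacle I expect is the superlinearity in $\lambda$ of $H^k$ (exponent $1/(1-\gamma) > 1$): it prevents a direct Schauder fixed-point argument without truncation and tightly couples Step~2 to the a priori gradient bound of Step~3, which is where the one-dimensionality of the problem is used decisively.
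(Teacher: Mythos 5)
Your overall skeleton matches the paper's closely: the same barrier pair $\underline{V}\equiv 0$, $\overline{V}\equiv \bar z^{\alpha}/(\rho B^{\alpha-1})$, an upper/lower--solution existence argument, an elliptic bootstrap to $C^{2,\tau}(\bar\Omega)$, and uniqueness via the strong maximum principle plus Hopf's lemma under the Neumann condition. The uniqueness step is sound; linearizing through $b(z)=\int_0^1\partial_\lambda H^k\,ds$ versus writing the nonlinear difference directly (as the paper does) are equivalent here.

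The genuine problem is in Step 3. The differentiated equation you display has source term $\partial_z H^k(z,W)=\alpha z^{\alpha-1}/B^{\alpha-1}$, which, because $\alpha\in(0,1)$, blows up as $z\to 0^+$. If the interior maximum $z^*$ of $W=(V^k)'$ sits near $0$, the pointwise inequality $\rho W(z^*)\le \alpha (z^*)^{\alpha-1}/B^{\alpha-1}$ obtained from $W''(z^*)\le 0$, $W'(z^*)=0$ gives no uniform bound, so the maximum-principle argument applied to the differentiated equation does not close. The correct manoeuvre --- which is what the paper does later in Proposition~\ref{prop:mfg-alt-hjb-props}, Property~(3), and what your phrase ``coercivity of $H^k$'' hints at without carrying through --- is to evaluate the \emph{original}, undifferentiated HJB equation~\eqref{eq:mfg-alt-HJB} at the interior critical point $z^*$ of $(V^k)'$, where $(V^k)''(z^*)=0$: this yields $(1-\gamma)(\gamma/w)^{\gamma/(1-\gamma)}\bigl((V^k)'(z^*)\bigr)^{1/(1-\gamma)}=\rho V^k(z^*)-k(V^k)'(z^*)-(z^*)^\alpha/B^{\alpha-1}\le \bar z^\alpha/B^{\alpha-1}$, and the superlinear coercivity gives the gradient bound without ever meeting the singular term $z^{\alpha-1}$. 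You should also note that for this particular theorem the paper avoids the gradient bound entirely: it cites a Nagumo-type existence result (Loc~2012) that produces a $W^{1,p}$ solution directly under the polynomial growth bound with $p=2/(1-\gamma)$, and then bootstraps from there ($f\in L^2 \Rightarrow V^k\in H^2 \Rightarrow (V^k)'\in L^\infty \Rightarrow f\in H^1 \Rightarrow V^k\in H^3\hookrightarrow C^{2,\tau}$); your truncation plus a-priori gradient bound is a more self-contained but longer route, and it only works once the Step~3 gap is repaired as above.
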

\begin{proof}
    The existence part of the proof uses the theory of upper and lower solutions, specifically Theorem 4.3. in~\cite{Loc2012}, and follows along similar lines to the proof of Proposition 3.12 in~\cite{Barker2019}. This shows that a solution $V^k \in W^{1,p}(\Omega)$ to the auxiliary HJB PDE exists, for some $p \geq 1$, provided the following hold true:
    
    \begin{enumerate}
        \item There exist constants $\underaccent{\bar}{V} \leq \bar{V}$ such that $\rho \underaccent{\bar}{V} - \frac{z^{\alpha}}{B^{\alpha - 1}} \leq 0 \leq \rho \bar{V} - \frac{z^{\alpha}}{B^{\alpha - 1}}$, for every $z \in \bar{\Omega}$.
        \item There exist constants $a_k \in \R$ and $b_k > 0$ such that
        \[ \left| \rho u - (1 - \gamma) \left( \frac{\gamma}{w} \right)^{\frac{\gamma}{1 - \gamma}} \left( \max(0,\lambda) \right)^{\frac{1}{1 - \gamma}} - k \lambda - \frac{z^{\alpha}}{B^{\alpha - 1}} \right| \leq a_k + b_k |\lambda|^p \, , \]
        for every $z \in \Omega$, $u \in \left[\underaccent{\bar}{V},\bar{V}\right]$ and every $\lambda \in \R$.
    \end{enumerate}
    If these two properties hold, then $\underaccent{\bar}{V} \leq V^k \leq \bar{V}$. The first assertion is true by taking $\underaccent{\bar}{V} = 0$ and $\bar{V} = \frac{\bar{z}^{\alpha}}{\rho B^{\alpha - 1}}$, which also gives the required bounds for $V^k$. The second assertion is true with $b_k = k + (1 - \gamma) \left( \frac{\gamma}{w} \right)^{\frac{\gamma}{1 - \gamma}}$, $a_k = \frac{2 \bar{z}^{\alpha}}{B^{\alpha - 1}} + b_k$, and $p = \frac{2}{1 - \gamma}$, as then
    \[ \begin{aligned}
        & \left| \rho u - (1 - \gamma) \left( \frac{\gamma}{w} \right)^{\frac{\gamma}{1 - \gamma}} \left( \max(0,\lambda) \right)^{\frac{1}{1 - \gamma}} - k \lambda - \frac{z^{\alpha}}{B^{\alpha - 1}} \right| \\
        &\leq \rho |u| + \left( k + (1 - \gamma) \left( \frac{\gamma}{w} \right)^{\frac{\gamma}{1 - \gamma}}\right) \max\left(1,|\lambda|^{\frac{1}{1 - \gamma}}\right) + \frac{\bar{z}^{\alpha}}{B^{\alpha - 1}} \leq a_k + b_k |\lambda|^p \, .
    \end{aligned} \]
    Now, since $\Omega$ is bounded and $p > 2$, $V^k \in H^1(\Omega)$. To show $V^k \in C^{2,\tau}\left(\bar{\Omega}\right)$, take any solution $V^k$ to~\eqref{eq:mfg-alt-model-HJB} and define 
    \[ f = \frac{2}{\sigma^2} \left( \left(\frac{\sigma^2}{2} - \rho \right) V^k + k \left(V^k\right)' + (1 - \gamma) \left(\frac{\gamma}{w}\right)^{\frac{\gamma}{1 - \gamma}} \left(\max(0,\left(V^k\right)'\right)^{\frac{1}{1 - \gamma}} +\frac{z^{\alpha}}{B^{\alpha - 1}}\right) \, . \]
    Then $V^k$ is a solution of $- u'' + u = f$, where $f \in L^2(\Omega)$. So, from the elliptic regularity result of Proposition 7.2. p.404 in~\cite{Taylor2011}, $V^k \in H^2(\Omega)$. Therefore $\left(V^k\right)' \in H^1(\Omega)$, and so $f \in H^1(\Omega)$ because $\alpha \in (0,1)$. So, from the elliptic regularity result of Proposition 7.4. p.407 in~\cite{Taylor2011}, $V^k \in H^3(\Omega)$. Then, by the Sobolev inequality (c.f. Theorem 6 p.270 in~\cite{Evans1998}) $V^k \in C^{2,\tau}\left(\bar{\Omega}\right)$.   
    
    To prove uniqueness we use the strong maximum principle and Hopf's lemma, as stated in~\cite{Evans1998} Section 6.4.2. pp. 330--333. Suppose, for some $k \in [0,\infty)$, there are two solutions $V_1,V_2 \in C^2(\Omega) \cap C^1(\bar{\Omega})$ to~\eqref{eq:mfg-alt-model-HJB} and $V_1 \neq V_2$. If we define $u = V_1 - V_2$, then $u$ must attain its maximum at some point $z^* \in \bar{\Omega}$. Suppose at this point $u > 0$. Note that if this were not the case, we could consider the minimum, as either its maximum or its minimum must be non--zero. The argument for the minimum is the same as the one for the maximum, so it is omitted. First suppose $z^* \in \Omega$. Since this is the maximal point, $u'(z^*) = 0$, so $V_1'(z^*) = V_2'(z^*)$. Hence, there exists an open, connected and bounded region $U$ such that $U \subset \Omega$, $z^* \in U$ and 
    \[ - \frac{\sigma^2}{2} u'' = -\rho u + ku' + (1 - \gamma) \left( \frac{\gamma}{w} \right)^{\frac{\gamma}{1 - \gamma}} \left[ \max \left( 0, V_1' \right)^{\frac{1}{1 - \gamma}} - \max \left( 0, V_2' \right)^{\frac{1}{1 - \gamma}} \right] \leq 0 \, , \]
    for every $z \in U$. So, by the strong maximum principle, $u$ is constant in $U$. In particular, using~\eqref{eq:mfg-alt-HJB}, $u(z^*) = 0$. But this is a contradiction. The only other case is $z^* \in \partial \Omega$ and $u(z) < u(z^*)$ for every $z \in \Omega$. Then, $\left. \frac{\partial u}{\partial \nu} \right|_{z^*} > 0$ by Hopf's Lemma, but by~\eqref{eq:mfg-alt-HJB-bc}, $\frac{\partial u}{\partial \nu} = \frac{\partial V_1}{\partial \nu} - \frac{\partial V_2}{\partial \nu} = 0$. This again leads to a contradiction. Therefore $V_1 = V_2$ and solutions to~\eqref{eq:mfg-alt-model-HJB} are unique for every $k \in [0,\infty)$. 
\end{proof}

\begin{proposition} \label{prop:mfg-alt-hjb-props}
   Fix $k,k_1,k_2 \in [0,\infty)$. Then, the unique classical solution to the auxiliary HJB PDE~\eqref{eq:mfg-alt-model-HJB}, as found in Theorem~\ref{thm:mfg-hjb-xu}, satisfies the following properties:
   
   \begin{enumerate}
        \item $V^k$ is an increasing function on $\bar{\Omega}$ i.e. $\left(V^k\right)' \geq 0$
        \item $\left(V^k\right)' > 0$ for all $z \in \Omega$
        \item $\|\left(V^k\right)'\|_{\infty} = \sup_{z \in \Omega} \left(V^k\right)'(z) \leq \left[ \frac{\bar{z}^{\alpha}}{(1 - \gamma)B^{\alpha - 1}} \right]^{1 - \gamma} \left( \frac{w}{\gamma} \right)^{\gamma}$
        \item $\|V^{k_1} - V^{k_2}\|_{\infty} \leq \frac{1}{\rho} \left[ \frac{\bar{z}^{\alpha}}{(1 - \gamma)B^{\alpha - 1}} \right]^{1 - \gamma} \left( \frac{w}{\gamma} \right)^{\gamma} |k_1 - k_2|$
        \item $V^k$ is strictly increasing with respect to $k$
        \item $\|\left(V^{k_1}\right)' - \left(V^{k_2}\right)'\|_{\infty} \leq \frac{4 \bar{z}}{\sigma^2} \left[ \frac{\bar{z}^{\alpha}}{(1 - \gamma)B^{\alpha - 1}} \right]^{1 - \gamma} \left( \frac{w}{\gamma} \right)^{\gamma} |k_1 - k_2|$
        \item $\left(V^k\right)''(0) > 0 > \left(V^k\right)''(\bar{z})$.
    \end{enumerate}
\end{proposition}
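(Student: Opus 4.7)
The seven assertions split into two groups: items~1, 2, 3, 7 concern a single solution $V^k$, and items~4, 5, 6 compare two solutions. My plan for the first group is to differentiate the HJB in $z$ to obtain a linear elliptic equation for $u := (V^k)'$ and then read off each property from a maximum-principle or Hopf-lemma argument for that linear equation; for the second group I would proceed analogously with $W := V^{k_1} - V^{k_2}$. The $C^{2,\tau}(\bar{\Omega})$ regularity supplied by Theorem~\ref{thm:mfg-hjb-xu} is what makes these differentiations legitimate.

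For the single-solution group, once $u \geq 0$ is known the $\max(0,\cdot)^{1/(1-\gamma)}$ term in $H^k$ is $C^1$, so differentiating \eqref{eq:mfg-alt-HJB} in $z$ yields
\[
  -\frac{\sigma^2}{2} u'' \;-\; \Bigl[k + \Bigl(\frac{\gamma}{w}\Bigr)^{\gamma/(1-\gamma)} u^{\gamma/(1-\gamma)}\Bigr] u' \;+\; \rho\, u \;=\; \frac{\alpha\, z^{\alpha-1}}{B^{\alpha-1}} \qquad \text{in } \Omega,
\]
with Dirichlet data $u(0) = u(\bar{z}) = 0$ from \eqref{eq:mfg-alt-HJB-bc} and a strictly positive source. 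The weak maximum principle then delivers property~1 (and justifies removing the $\max(0,\cdot)$ a posteriori); the strong maximum principle gives property~2, since an interior zero of $u$ would force $u \equiv 0$, contradicting the positivity of the source; and Hopf's lemma applied to $u$ at the two boundary points, where $u$ attains its minimum value $0$ and the operator has zeroth-order coefficient $\rho>0$, yields property~7 in the form $-u'(0) < 0$ and $u'(\bar{z}) < 0$. Property~3 is extracted by testing the original HJB at an interior maximum $z^{\ast}$ of $u$, where $(V^k)''(z^{\ast}) = 0$ and so
\[
  \rho\, V^k(z^{\ast}) \;=\; (1-\gamma)\Bigl(\frac{\gamma}{w}\Bigr)^{\gamma/(1-\gamma)} M^{1/(1-\gamma)} \;+\; k M \;+\; \frac{(z^{\ast})^{\alpha}}{B^{\alpha-1}};
\]
combining with $V^k \leq \bar{z}^{\alpha}/(\rho B^{\alpha-1})$ from Theorem~\ref{thm:mfg-hjb-xu} and discarding the non-negative terms $k M$ and $(z^{\ast})^{\alpha}/B^{\alpha-1}$ gives the claimed bound on $M$ after raising to the power $1-\gamma$.

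For the comparison group, the fundamental theorem of calculus rewrites the Hamiltonian difference as
\[
H^{k_1}\bigl(y,(V^{k_1})'\bigr) - H^{k_2}\bigl(y,(V^{k_2})'\bigr) \;=\; \tilde a(y)\, W'(y) \;+\; (k_1 - k_2)(V^{k_2})'(y),
\]
with $\tilde a(y) := \int_0^1 \partial_{\lambda} H^{k_1}\bigl(y,\, s(V^{k_1})'(y) + (1-s)(V^{k_2})'(y)\bigr)\, ds \geq 0$, so that $W$ solves the linear Neumann problem
\[
-\frac{\sigma^2}{2} W'' \;-\; \tilde a(y)\, W' \;+\; \rho\, W \;=\; (k_1 - k_2)(V^{k_2})', \qquad W'(0) = W'(\bar{z}) = 0.
\]
Property~4 is the Neumann maximum principle for this problem: at any extremum of $W$ the drift term $\tilde a\, W'$ vanishes (by the boundary condition at an endpoint, or because $W' = 0$ at an interior extremum), giving $\rho\, \|W\|_{\infty} \leq |k_1 - k_2|\, \|(V^{k_2})'\|_{\infty}$, which together with property~3 is the stated estimate. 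Property~5 is its strict version: for $k_1 > k_2$ the source is non-negative and strictly positive on $\Omega$ by property~2, so the strong maximum principle combined with Hopf at the boundary (where the Neumann condition $W'(0) = W'(\bar{z}) = 0$ excludes a boundary minimum) yields $W > 0$ throughout $\bar{\Omega}$. Property~6 is the most delicate step; my plan is to integrate the $W$-equation once from $0$ to $z$ using $W'(0) = 0$ and once from $\bar{z}$ to $z$ using $W'(\bar{z}) = 0$, average the two resulting expressions for $W'(z)$ to symmetrise the $\rho \int W$ term over the whole interval (producing a factor $\bar{z}/\sigma^2$ from each endpoint), and then insert property~4 for $\|W\|_{\infty}$ and property~3 for $\|(V^{k_2})'\|_{\infty}$ to arrive at the constant $4\bar{z}/\sigma^2$. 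The main obstacle is handling the $\tilde a\, W'$ term under the integral, which a naive Lipschitz estimate would return to the right-hand side as a spurious $\|W'\|_{\infty}$; I would eliminate it by substituting back the identity $H^{k_1} - H^{k_2} = \rho W - \tfrac{\sigma^2}{2} W''$ coming from the two HJBs, so that the problematic piece collapses to a difference of boundary values of $W'$ that vanishes by the Neumann conditions, mimicking the analogous step in~\cite{Barker2019} that the authors explicitly announce they are adapting.
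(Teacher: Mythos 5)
Your route to properties 1, 2 and 7 is genuinely different from the paper's. The paper never differentiates the HJB; it proves (1) and (2) by a direct contradiction argument in which two points $z_0<z_1$ with $(V^k)'(z_0)=(V^k)'(z_1)=0$ and $(V^k)'\leq 0$ between them are produced and then the HJB is evaluated at both, using monotonicity of $z\mapsto z^\alpha$ to reach a contradiction; and it proves (7) by an explicit Taylor-expansion argument near each endpoint. Differentiating the HJB in $z$ to get a Dirichlet problem for $u=(V^k)'$ with source $\alpha z^{\alpha-1}/B^{\alpha-1}>0$ and then invoking the weak and strong maximum principles and Hopf's lemma is cleaner and captures the same structure (the paper's contradiction at $z_0,z_1$ is essentially the discrete incarnation of the positivity of that source). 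Two small points of care for your version: the parenthetical ``once $u\geq 0$ is known the $\max(0,\cdot)^{1/(1-\gamma)}$ term is $C^1$'' is misplaced --- $\lambda\mapsto\max(0,\lambda)^{1/(1-\gamma)}$ is $C^1$ everywhere because $1/(1-\gamma)>1$, so the differentiation is legitimate before you know the sign of $u$, which is good because otherwise your argument would be circular; and the source $\alpha z^{\alpha-1}$ is singular at $z=0$ when $\alpha<1$, so you should phrase the maximum-principle step weakly (the relevant regularity $V^k\in H^3$, hence $u\in H^2$, is what the paper supplies). Properties 3, 4 and 5 match the paper's arguments closely.

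Property 6 is where your proposal has a genuine gap. Your plan to eliminate $\int \tilde a\,W'\,dy$ by substituting the identity $H^{k_1}-H^{k_2}=\rho W-\tfrac{\sigma^2}{2}W''$ is circular: $\tilde a W' = (H^{k_1}-H^{k_2}) - (k_1-k_2)(V^{k_2})'$ by construction, so substituting $\rho W - \tfrac{\sigma^2}{2}W''$ for $H^{k_1}-H^{k_2}$ inside the integral and then integrating just reproduces the equation you started from ($0=0$), leaving $\int\tilde a W'$ untouched. The paper avoids this obstacle by a sign argument rather than a substitution: at a point $z$ where $W'(z)\geq 0$, one chooses $z_0\leq z$ with $W'(z_0)=0$ and $W'\geq 0$ on $[z_0,z]$ (this exists because $W'(0)=0$), writes $W'(z)=\int_{z_0}^z W''$, and observes that on $[z_0,z]$ both $-k_1W'$ and the nonlinear difference $-(1-\gamma)(\gamma/w)^{\gamma/(1-\gamma)}[((V^{k_1})')^{1/(1-\gamma)}-((V^{k_2})')^{1/(1-\gamma)}]$ are $\leq 0$, because $W'\geq 0$ means $(V^{k_1})'\geq (V^{k_2})'$ there and the power function is increasing. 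These terms can therefore be discarded, and what remains is $\tfrac{2}{\sigma^2}\int_{z_0}^z[\rho W - (k_1-k_2)(V^{k_2})']\leq \tfrac{2\bar z}{\sigma^2}(\rho\|W\|_\infty + |k_1-k_2|\|(V^{k_2})'\|_\infty)$, which with properties 3 and 4 produces exactly $\tfrac{4\bar z}{\sigma^2}[\ldots]|k_1-k_2|$; the case $W'(z)\leq 0$ is symmetric. You will need some version of this sign-based cancellation; your symmetrised double-integration idea does not on its own provide it.
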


\begin{proof}
    Property (1): Suppose, for a contradiction, there exists $z \in \bar{\Omega}$ such that $\left(V^k\right)'(z) < 0$. First, by the boundary condition~\eqref{eq:mfg-alt-HJB-bc}, $z \in \Omega$. So, by the boundary conditions and continuity of $\left(V^k\right)'$, there exists $z_0,z_1 \in \bar{\Omega}$ with $z_0 < z_1$, $\left(V^k\right)'(z_0) = \left(V^k\right)'(z_1) = 0$ and $\left(V^k\right)'(z) \leq 0$ for all $z \in (z_0,z_1)$. Suppose that $z_0,z_1 \in \Omega$. Then $\left(V^k\right)''(z_0) \leq 0 \leq \left(V^k\right)''(z_1)$ by construction of $z_0,z_1$ and differentiability of $\left(V^k\right)'$. Furthermore,  $V^k(z_0) > V^k(z_1)$ because $\left(V^k\right)' < 0$ in $(z_0,z_1)$. So, using~\eqref{eq:mfg-alt-HJB}
    \[ 0 = - \frac{\sigma^2}{2} \left(\left(V^k\right)''(z_1) - \left(V^k\right)''(z_0) \right) + \rho \left( V^k(z_1) - V^k(z_0) \right) - \frac{1}{B^{\alpha - 1}} \left( z_1^{\alpha} - z_0^{\alpha} \right) < 0 \, . \]
    This is a contradiction, so $z_0 = 0$ or $z_1 = \bar{z}$. Assume $z_0 = 0$, we will again prove a contradiction (the other two cases of $z_1 = \bar{z}$ and both $z_0 = 0, z_1 = \bar{z}$ follow along similar arguments so their proofs are omitted). Since $\left(V^k\right)'(0) = \left(V^k\right)'(z_1) = 0$ and $\left(V^k\right)'(z) < 0$ for all $z \in (0,z_1)$ then, by continuity of $\left(V^k\right)''$, we can find $\epsilon_1,\delta_1 \in (0,\frac{z_1}{2})$ such that $\left(V^k\right)''(z) \leq 0$ for all $z \in (0,\epsilon_1]$ and $\left(V^k\right)''(z) \geq 0$ for all $z \in [z_1 - \delta_1,z_1)$. Furthermore, $V^k$ is strictly decreasing on $(z_0,z_1)$. So, using these two facts and continuity of $\left(V^k\right)'$ there exists $\delta \in (0,\delta_1]$ and $\epsilon \in (0,\epsilon_1]$ such that 
    \begin{enumerate}
        \item $\left(V^k\right)'(\epsilon) = \left(V^k\right)'(z_1 - \delta) = \min \left( \left(V^k\right)'(\epsilon_1), \left(V^k\right)'(z_1 - \delta_1) \right)$
        \item $V^k(\epsilon) > V^k(z_1 - \delta)$
        \item $\left(V^k\right)''(\epsilon) \leq 0 \leq \left(V^k\right)''(z_1 - \delta)$.
    \end{enumerate}
    Then $- \frac{\sigma^2}{2} \left(\left(V^k\right)''(z_1 - \delta) - \left(V^k\right)''(\epsilon) \right) + \rho \left( V^k(z_1 - \delta) - V^k(\epsilon) \right) - \frac{(z_1 - \delta)^{\alpha} - \epsilon^{\alpha}}{B^{\alpha - 1}} < 0$, which contradicts the fact that $V^k$ is a solution to~\eqref{eq:mfg-alt-model-HJB}. Therefore, $\left(V^k\right)' \geq 0$ in $\bar{\Omega}$.

    Property (2): From Property~(1), we know $\left(V^k\right)' \geq 0$. Now suppose, for a contradiction, there exists $z^* \in \Omega$ such that $\left(V^k\right)'(z^*) = 0$. Then $\left(V^k\right)''(z^*) = 0$, since it is a minimum of $\left(V^k\right)'$. So, by~\eqref{eq:mfg-alt-HJB}, $V^k(z^*) = \frac{z^{\alpha}}{\rho B^{\alpha - 1}}$ and, since $\left(V^k\right)'(z^*) < \frac{d}{dz} \left( \frac{z^{\alpha}}{\rho B^{\alpha - 1}} \right)$, there exists $z_0,z_1 \in \Omega$ with $z_0 < z^* < z_1$ such that
    \begin{enumerate}
        \item $\left(V^k\right)'(z_0) = \left(V^k\right)'(z_1)$
        \item $V^k (z_0) > \frac{z_0^{\alpha}}{\rho B^{\alpha - 1}}$ and $V^k (z_1) < \frac{z_1^{\alpha}}{\rho B^{\alpha - 1}}$ 
        \item $\left(V^k\right)''(z_0) \leq 0 \leq \left(V^k\right)''(z_1)$.
    \end{enumerate}
    Then $- \frac{\sigma^2}{2} \left(\left(V^k\right)''(z_1) - \left(V^k\right)''(z_0) \right) + \rho \left( V^k(z_1) - V^k(z_0) \right) - \frac{1}{B^{\alpha - 1}} \left( z_1^{\alpha} - z_0^{\alpha} \right) < 0 $, which is a contradiction of~\eqref{eq:mfg-alt-HJB}. Therefore, $\left(V^k\right)'(z) > 0$ for all $z \in \Omega$.
    
    Property (3): Since $\left(V^k\right)'$ is continuous on $\bar{\Omega}$, $\left(V^k\right)' \geq 0$ and $\left(V^k\right)'(0) = \left(V^k\right)'(\bar{z}) = 0$, then $\left(V^k\right)'$ must have a maximum that it attains at some point $z^* \in \Omega$. Furthermore, since $\left(V^k\right)'$ is continuously differentiable in $\Omega$, then $\left(V^k\right)''(z^*) = 0$. So, using the bound on $V^k$ found in Theorem~\ref{thm:mfg-hjb-xu}
    \[ \begin{aligned}
        0 \leq \left(V^k\right)'(z) & \leq \left(V^k\right)'(z^*) = \left[ \frac{w^{\frac{\gamma}{1 - \gamma}}}{\left(1 - \gamma\right) \gamma^{\frac{\gamma}{1 - \gamma}}} \left( \rho V^k(z^*) - k \left(V^k\right)'(z^*) - \frac{\left(z^*\right)^{\alpha}}{B^{\alpha - 1}} \right) \right]^{1 - \gamma} \\
        & \leq  \left[ \frac{\bar{z}^{\alpha}}{(1 - \gamma)B^{\alpha - 1}} \right]^{1 - \gamma} \left( \frac{w}{\gamma} \right)^{\gamma} \, .
    \end{aligned} \]
    
    Property (4): Take $k_1,k_2 \in [0,\infty)$ such that $k_1 < k_2$. First we show $V^{k_2} - V^{k_1} \geq 0$, then we show $V^{k_2} - V^{k_1} \leq \frac{\|\left(V^{k_1}\right)'\|_{\infty}}{\rho} (k_2 - k_1)$, and we can conclude using Property~(3). Let $u_1 = V^{k_2} - V^{k_1}$ and assume, for a contradiction, there exists $z \in \bar{\Omega}$ such that $u_1(z) < 0$. Then, $u_1$ attains a minimum at $z^* \in \bar{\Omega}$ and $u_1(z^*) < 0$. First suppose $z^* \in \Omega$, then $u_1'(z^*) = 0$ and from~\eqref{eq:mfg-alt-HJB}
    \[ - \frac{\sigma^2}{2} u_1''(z^*) = - \rho u_1(z^*) + (k_2 - k_1) \left(V^{k_1}\right)'(z^*) > 0 \, , \]
    since $\left(V^{k_1}\right)' \geq 0$ and $u_1 < 0$. Then, by continuity of $u_1''$, there exists an open bounded, connected $\Omega' \subset \Omega$ such that $z^* \in \Omega'$ and $u_1'' < 0$ for all $z \in \Omega'$. So, by the strong maximum principle, $u_1$ is constant in $\Omega'$. In particular, $u_1'' = 0$, which contradicts $u_1'' < 0$ for all $z \in \Omega'$. So, $z^* \in \partial \Omega$ and $u_1(z) < u_1(z^*)$ for all $z \in \Omega$. However, from Hopf's lemma $u_1'(z^*) \neq 0$, which contradicts~\eqref{eq:mfg-alt-HJB-bc}. So, we conclude that $u_1 \geq 0$. Now let $u_2 = V^{k_2} - V^{k_1} - \epsilon$, with $\epsilon = \frac{\|\left(V^{k_1}\right)'\|_{\infty}}{\rho} (k_2 - k_1) < \infty$. We assume, for a contradiction, there exists $z \in \bar{\Omega}$ such that $u_2(z) > 0$. Then $u_2$ attains a maximum at $z^* \in \bar{\Omega}$ and $u_2(z^*) > 0$. First suppose $z^* \in \Omega$, then $u_2'(z^*) = 0$ and from~\eqref{eq:mfg-alt-HJB}
    \[ - \frac{\sigma^2}{2} u_2''(z^*) = - \rho u_2(z^*) + (k_2 - k_1) \left(V^{k_1}\right)'(z^*) - \rho \epsilon < 0 \, , \]
    since $u_2 > 0$ and $\rho \epsilon \geq (k_1 - k_2) \left(V^{k_1}\right)'(z^*)$. Then, by continuity of $u_2''$, there exists an open bounded, connected $\Omega' \subset \Omega$ such that $z^* \in \Omega'$ and $u_2'' > 0$ for all $z \in \Omega'$. So, by the strong maximum principle, $u_2$ is constant in $\Omega'$. In particular, $u_2'' = 0$, which contradicts $u_2'' > 0$ for all $z \in \Omega'$. So, $z^* \in \partial \Omega$ and $u_2(z) > u_2(z^*)$ for all $z \in \Omega$. However, from Hopf's lemma $u_2'(z^*) \neq 0$, which contradicts~\eqref{eq:mfg-alt-HJB-bc}. So, we can conclude that $u_2 \leq 0$.
    
    Property (5): The proof of Property~(4) shows $V^k$ is increasing with respect to $k$. Now suppose, for a contradiction, there exists $z^* \in \bar{\Omega}$ such that $k_1 < k_2$ but $V^{k_1}(z^*) = V^{k_2}(z^*)$. First, assume $z^* \in \Omega$ and define $u = V^{k_2} - V^{k_1}$. Then, $u(z^*) = 0$, $u'(z^*) = 0$ and $u''(z^*) \geq 0$, since $z^*$ is a minimum of $u$. Furthermore, from Property~(2), $\left(V^{k_1}\right)'(z^*) > 0$. Therefore, using~\eqref{eq:mfg-alt-HJB}, we get the contradiction
    \[ 0 = - \frac{\sigma^2}{2} u'' + (k_1 - k_2) \left(V^{k_1}\right)' < 0 \, . \]
    Hence, $z^* \in \partial \Omega$, so $u'(z^*) = 0$ ,using~\eqref{eq:mfg-alt-HJB-bc}. But, $u'(z^*) \neq 0$ by Hopf's lemma, which is a contradiction. So, $V^k$ is strictly increasing with respect to $k$.
    
    Property (6):
    Fix $k_1, k_2 \in [0,\infty)$. Let $u = V^{k_1} - V^{k_2}$. Then, $u$ satisfies
    \[ \frac{\sigma^2}{2} u'' = \rho u - k_1 u' + (k_2 - k_1) \left(V^{k_2}\right)' - (1 - \gamma) \left( \frac{\gamma}{w} \right)^{\frac{\gamma}{1 - \gamma}} \left( \left( \left(V^{k_1}\right)' \right)^{\frac{1}{1 - \gamma}} - \left( \left(V^{k_2}\right)' \right)^{\frac{1}{1 - \gamma}} \right) \, . \]
    Suppose for $z \in \Omega$, $u'(z) \geq 0$. Then, since $u'(0) = 0$, there exists $z_0 \in [0,z]$ such that $u'(y) \geq 0$ for all $y \in [z_0,z]$ and $u'(z_0) = 0$. Therefore
    \[ \begin{aligned}
        0 \leq u'(z) = \int_{z_0}^{z} u''(y)~dy & \leq \frac{2 \bar{z}}{\sigma^2} \left( \rho ||u||_{\infty} + |k_2 - k_1| \left| \left|  \left(V^{k_2}\right)' \right| \right|_{\infty} \right) \\ 
        & \leq \frac{4 \bar{z}}{\sigma^2} \left[ \frac{\bar{z}^{\alpha}}{(1 - \gamma)B^{\alpha - 1}} \right]^{1 - \gamma} \left( \frac{w}{\gamma} \right)^{\gamma} |k_2 - k_1| \, .
    \end{aligned} \]
    We can similarly show that $u'(z) \geq - \frac{4 \bar{z}}{\sigma^2} \left[ \frac{\bar{z}^{\alpha}}{(1 - \gamma)B^{\alpha - 1}} \right]^{1 - \gamma} \left( \frac{w}{\gamma} \right)^{\gamma} |k_2 - k_1|$ if $u'(z) \leq 0$. Hence, $\left(V^k\right)'$ is Lipschitz continuous with respect to $k$ with the required constant.
    
    Property (7): First, we will show $\left(V^k\right)''(0) > 0$ and then we will show $\left(V^k\right)''(\bar{z}) < 0$. Both steps use a similar method. Note that $\left(V^k\right)''(0) \geq 0$ and $\left(V^k\right)''(\bar{z}) \leq 0$, because ${\left(V^k\right)'(0) = \left(V^k\right)'(\bar{z}) = 0}$ and $\left(V^k\right)(z) > 0$ for all $z \in (0,\bar{z})$. So suppose, for a contradiction, that $\left(V^k\right)''(0) = 0$. Then, since $V^k \in C^{2,\tau}\left(\bar{\Omega}\right)$, we can use continuity of $V^k, \left(V^k\right)', \left(V^k\right)''$ and~\eqref{eq:mfg-alt-HJB},~\eqref{eq:mfg-alt-HJB-bc} to show $V^k(0) = 0$. We can also use continuity of $\left(V^k\right)''$ to show that for every $C > 0$ there exists $\epsilon_1 > 0$ such that $z \in (0,\epsilon_1) \implies \left(V^k\right)''(z) < C$. Therefore, for any $z \in (0,\epsilon_1)$
    \begin{equation} \label{eq:mfg-contra-Vbound}
        V^k(z) = \int_0^z \int_0^y \left(V^k\right)''(y')~dy'~dy - z \left(V^k\right)'(0) - V^k(0) < \frac{C}{2} z^2 \, .
        \end{equation}
    Since $\left(V^k\right)' > 0$ in $\Omega$, there exists $\epsilon_2 > 0$ such that $\left(V^k\right)'$ increases on $(0,\epsilon_2)$. Therefore, $\left(V^k\right)'' \geq 0$ on $(0,\epsilon_2)$. Take $C = \frac{2}{\rho B^{\alpha - 1}}$ and $\epsilon = \min(\epsilon_1,\epsilon_2,1)$. Then, from~\eqref{eq:mfg-contra-Vbound}
    \begin{equation} \label{eq:mfg-contra-Vbound2}
        V^k(z) < \frac{z^2}{\rho B^{\alpha - 1}} \leq \frac{z^{\alpha}}{\rho B^{\alpha - 1}} \, ,
    \end{equation}
    for all $z \in (0,\epsilon)$. But, by rearranging~\eqref{eq:mfg-alt-HJB} and using $\left(V^k\right)',\left(V^k\right)'' \geq 0$, we get $V^k(z) \geq \frac{z^{\alpha}}{\rho B^{\alpha - 1}}$, which contradicts~\eqref{eq:mfg-contra-Vbound2}. Hence, $\left(V^k\right)''(0) > 0$. Now suppose, for a contradiction, that $\left(V^k\right)''(\bar{z}) = 0$. Then, since $V^k \in C^{2,\tau}\left(\bar{\Omega}\right)$, we can use continuity of $V^k, \left(V^k\right)', \left(V^k\right)''$ and~\eqref{eq:mfg-alt-HJB},~\eqref{eq:mfg-alt-HJB-bc} to show $V^k(\bar{z}) = \frac{\bar{z}^{\alpha}}{\rho B^{\alpha - 1}}$. We can also use continuity of $\left(V^k\right)''$ to show that for every $C > 0$ there exists $\epsilon_1 > 0$ such that $z \in (\bar{z} - \epsilon_1, \bar{z}) \implies \left(V^k\right)''(z) > -C$. Therefore, for any $z \in (\bar{z} - \epsilon_1, \bar{z})$
    \begin{equation} \label{eq:mfg-contra-Vbound3} 
        V^k(z) = \int_z^{\bar{z}} \int_y^{\bar{z}} \left(V^k\right)''(y)~dy + V^k(\bar{z}) -\left(\bar{z} - z\right) \left(V^k\right)'(\bar{z})  > \frac{\bar{z}^{\alpha}}{\rho B^{\alpha - 1}} - \frac{C}{2} (\bar{z} - z)^2 \, .
    \end{equation}
    Since $\left(V^k\right)' > 0$ in $\Omega$, there exists $\epsilon_2 > 0$ such that $\left(V^k\right)'$ decreases on $(\bar{z} - \epsilon_2, \bar{z})$. Therefore, $\left(V^k\right)'' \leq 0$ on $(\bar{z} - \epsilon_2, \bar{z})$. Take $C > 0$ such that
    \[ \frac{k C}{\rho} + \frac{C}{2} + \frac{C^{\frac{1}{1 - \gamma}}}{\rho} (1 - \gamma) \left( \frac{\gamma}{w} \right)^{\frac{\gamma}{1 - \gamma}} \leq \alpha \frac{\bar{z}^{\alpha - 1}}{{\rho B^{\alpha - 1}}} \, , \]
    and $\epsilon = \min(\epsilon_1,\epsilon_2,1)$. Then, from~\eqref{eq:mfg-contra-Vbound3} 
    \begin{equation} \label{eq:mfg-contra-Vbound4}
        V^k(z) > \frac{\bar{z}^{\alpha}}{\rho B^{\alpha - 1}} - \frac{C}{2} (\bar{z} - z)^2 \geq \frac{\bar{z}^{\alpha}}{\rho B^{\alpha - 1}} - \frac{C}{2} (\bar{z} - z) \, ,
    \end{equation}
    for all $z \in (\bar{z} - \epsilon, \bar{z})$. But, from~\eqref{eq:mfg-alt-HJB} and using $\left(V^k\right)' \leq C (\bar{z} - z)$, $\left(V^k\right)'' \leq 0$, we get
    \[ \begin{aligned}
            V^k(z) & = \frac{1}{\rho} \left( \frac{\sigma^2}{2} \left(V^k\right)''(z) + k \left(V^k\right)'(z) + (1 - \gamma) \left( \frac{\gamma}{w} \right)^{\frac{\gamma}{1 - \gamma}} \left( \left(V^k\right)' \right)^{\frac{1}{1 - \gamma}} + \frac{z^{\alpha}}{B^{\alpha - 1}} \right) \\
            & \leq \frac{k C}{\rho} (\bar{z} - z) + \frac{C^{\frac{1}{1 - \gamma}}}{\rho} (1 - \gamma) \left( \frac{\gamma}{w} \right)^{\frac{\gamma}{1 - \gamma}} (\bar{z} - z)^{\frac{1}{1 - \gamma}} + \frac{z^{\alpha}}{\rho B^{\alpha - 1}} \\
            & \leq \frac{z^{\alpha}}{\rho B^{\alpha - 1}} + \left( \frac{k C}{\rho} + \frac{C^{\frac{1}{1 - \gamma}}}{\rho} (1 - \gamma) \left( \frac{\gamma}{w} \right)^{\frac{\gamma}{1 - \gamma}} \right) (\bar{z} - z) \, .
        \end{aligned} \]
    So, using~\eqref{eq:mfg-contra-Vbound4}, we get $\alpha \frac{\bar{z}^{\alpha - 1}}{\rho B^{\alpha - 1}} (\bar{z} - z) <  \left( \frac{k C}{\rho} + \frac{C^{\frac{1}{1 - \gamma}}}{\rho} (1 - \gamma) \left( \frac{\gamma}{w} \right)^{\frac{\gamma}{1 - \gamma}} \right) (\bar{z} - z)$, which contradicts the definition of $C$. Hence, $\left(V^k\right)''(\bar{z}) < 0$.
\end{proof}

\subsubsection*{The auxiliary Fokker--Planck equation}

\begin{definition} \label{def:mfg-alt-FPsol}
    Fix $k \in [0,\infty)$ and let $V^k \in C^2(\Omega) \cap C^1\left(\bar{\Omega}\right)$ denote the unique solution to~\eqref{eq:mfg-alt-model-HJB}. Then, we define the function $m^k: \Omega \to (0,\infty)$ by
    \begin{subequations} \label{eq:mfg-alt-FPsol}
        \begin{equation}
            \bar{m}^k = e^{\frac{2}{\sigma^2} \left( k z + \int_0^z \left( \frac{\gamma}{w} \left(V^k\right)' \right)^{\frac{\gamma}{1 - \gamma}}~dy \right)}
        \end{equation}
        \begin{equation}
           \left\| \bar{m}^k \right\|_1 = \int_{\Omega} \bar{m}^k dz
        \end{equation}
        \begin{equation}
            m^k = \frac{1}{\left\| \bar{m}^k \right\|_1} \bar{m}^k \, .
        \end{equation}
    \end{subequations}
\end{definition}

\begin{proposition} \label{prop:mfg-alt-FPsol}
    For every $k \in [0,\infty)$, $m^k \in C^2(\Omega) \cap C^1\left(\bar{\Omega}\right)$ where $m^k$ is defined by~\eqref{def:mfg-alt-FPsol}. 
\end{proposition}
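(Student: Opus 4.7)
The plan is to propagate regularity from $V^k$ through the exponent in~\eqref{eq:mfg-alt-FPsol} to $\bar{m}^k$, and then divide by a positive constant. By Theorem~\ref{thm:mfg-hjb-xu}, $V^k \in C^{2,\tau}(\bar{\Omega})$, so $(V^k)' \in C^{1,\tau}(\bar{\Omega})$; in particular $(V^k)' \in C(\bar{\Omega}) \cap C^1(\Omega)$. By Property~(2) of Proposition~\ref{prop:mfg-alt-hjb-props}, $(V^k)' > 0$ on the open set $\Omega$, while $(V^k)'(0) = (V^k)'(\bar{z}) = 0$ at the boundary.

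Define $g : [0,\infty) \to [0,\infty)$ by $g(x) = \left(\tfrac{\gamma}{w} x\right)^{\gamma/(1-\gamma)}$ and let $\phi(z) = g\bigl((V^k)'(z)\bigr)$ be the integrand in~\eqref{eq:mfg-alt-FPsol}. Since $\gamma/(1-\gamma) > 0$, $g$ is continuous on $[0,\infty)$ with $g(0) = 0$ and $g \in C^1((0,\infty))$. Composing with $(V^k)' \in C(\bar{\Omega})$ yields $\phi \in C(\bar{\Omega})$. On $\Omega$, $(V^k)'$ takes values in $(0,\infty)$ where $g$ is smooth, and $(V^k)' \in C^1(\Omega)$, so by the chain rule $\phi \in C^1(\Omega)$.

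Next set $\Psi(z) = \frac{2}{\sigma^2}\bigl(k z + \int_0^z \phi(y)\,dy\bigr)$. The linear term $kz$ is smooth, and by the fundamental theorem of calculus the integral is $C^1(\bar{\Omega})$ (since $\phi \in C(\bar{\Omega})$) and $C^2(\Omega)$ (since $\phi \in C^1(\Omega)$). Hence $\Psi \in C^1(\bar{\Omega}) \cap C^2(\Omega)$, and because the exponential is $C^\infty$, $\bar{m}^k = e^{\Psi} \in C^1(\bar{\Omega}) \cap C^2(\Omega)$. Finally, $\bar{m}^k$ is strictly positive and continuous on $\bar{\Omega}$, so $\|\bar{m}^k\|_1 = \int_\Omega \bar{m}^k\,dz \in (0,\infty)$, and dividing by this positive constant preserves the regularity class. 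Therefore $m^k \in C^2(\Omega) \cap C^1(\bar{\Omega})$.

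The only delicate point—and really the reason the statement does not assert $C^2(\bar{\Omega})$—is that $g$ is in general not $C^1$ at $0$: when $\gamma/(1-\gamma) < 1$ (i.e.\ $\gamma < 1/2$), $g'(x) \to \infty$ as $x \to 0^+$, so $\phi'$ may blow up at $z = 0, \bar{z}$ where $(V^k)'$ vanishes. This prevents $\bar{m}^k$ from being $C^2$ up to the boundary, but on the open interval $\Omega$ we have $(V^k)' > 0$ uniformly on compact subsets, so the $C^2(\Omega)$ regularity is unaffected; and the $C^1(\bar{\Omega})$ regularity only requires continuity of $\phi$ on $\bar{\Omega}$, which holds because $g$ itself is continuous at $0$.
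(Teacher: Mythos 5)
Your proof is correct and takes essentially the same approach as the paper: propagate regularity from $V^k$ through the exponential formula, using continuity and boundedness of $(V^k)'$ on $\bar{\Omega}$ for the $C^1(\bar{\Omega})$ conclusion and strict positivity of $(V^k)'$ in $\Omega$ (Property~(2) of Proposition~\ref{prop:mfg-alt-hjb-props}) for the $C^2(\Omega)$ conclusion, then observe that dividing by the positive constant $\|\bar{m}^k\|_1$ preserves regularity. The paper verifies this by writing the derivative formulas for $(m^k)'$ and $(m^k)''$ explicitly rather than phrasing it as a chain-rule argument on the composition $g \circ (V^k)'$, but the content is the same; your closing remark about why $C^2$ fails up to the boundary when $\gamma < 1/2$ is a correct and useful clarification that the paper leaves implicit.
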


\begin{proof}
    First, note that $m^k$ is well defined because $\left(V^k\right)' \geq 0$ and $\left(V^k\right)'$ is uniformly bounded. Hence, there exists $C \in (1,\infty)$ such that $\bar{m}^k(z) \in [1,C]$ and $\left| \left| \bar{m}^k \right| \right|_1 \in [\bar{z},C \bar{z}]$, so $m^k(z) \in \left[\frac{1}{C \bar{z}}, \frac{C}{\bar{z}}\right]$. Furthermore, $m^k \in C\left(\bar{\Omega}\right)$ because $V^k \in C^1\left(\bar{\Omega}\right)$. Now, if $m^k \in C^2(\Omega) \cap C^1\left(\bar{\Omega}\right)$, then its derivatives would be
    \begin{subequations} \label{eq:mfg-alt-FP-derivs}
        \begin{equation}
            \left(m^k\right)' = \frac{2}{\sigma^2} \left(k + \left( \frac{\gamma}{w} \left(V^k\right)' \right)^{\frac{\gamma}{1 - \gamma}} \right) m^k
        \end{equation}
        \begin{equation}
            \begin{aligned}
                \left(m^k\right)'' =& \frac{2}{\sigma^2} \left(k + \left( \frac{\gamma}{w} \left(V^k\right)' \right)^{\frac{\gamma}{1 - \gamma}} \right) \left(m^k\right)' \\
                &+ \frac{2 \gamma^2}{\sigma^2 w (1 - \gamma)} \left( \frac{\gamma}{w} \left(V^k\right)' \right)^{\frac{2\gamma - 1}{1 - \gamma}} \left(V^k\right)'' m^k \, . 
            \end{aligned}
        \end{equation}
    \end{subequations}
    But, since $V^k \in C^1\left(\bar{\Omega}\right)$ and $m^k \in C\left(\bar{\Omega}\right)$, then $\frac{2}{\sigma^2} \left(k + \left( \frac{\gamma}{w} \left(V^k\right)' \right)^{\frac{\gamma}{1 - \gamma}} \right) m^k$ is well--defined and continuous for all $z \in \bar{\Omega}$. Hence, $m^k \in C^1\left(\bar{\Omega}\right)$. Then, $\left(m^k\right)',\left(V^k\right)$ and $\left(V^k\right)''$ are continuous in $\Omega$ and from Proposition~\ref{prop:mfg-alt-hjb-props} $\left(V^k\right)' > 0$ in $\Omega$. Hence, $\left(m^k\right)''$ is well--defined in $\Omega$, $\left(m^k\right)'' \in C(\Omega)$ and $m^k \in C^2(\Omega) \cap C^1\left(\bar{\Omega}\right)$.
\end{proof}

\begin{theorem} \label{thm:mfg-alt-FP-xu}
   There exists a unique solution $m^k \in C^2(\Omega) \cap C^1\left(\bar{\Omega}\right)$ to the auxiliary Fokker--Planck PDE~\eqref{eq:mfg-alt-model-FP} for any $k \in [0,\infty)$.
\end{theorem}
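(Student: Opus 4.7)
Proof proposal. The plan is to argue existence by direct verification that the explicit formula of Definition~\ref{def:mfg-alt-FPsol} satisfies all the conditions of~\eqref{eq:mfg-alt-model-FP}, and then to deduce uniqueness by integrating the PDE once and using the boundary condition to pin down the (otherwise free) integration constant.

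For existence, the regularity $m^k \in C^2(\Omega)\cap C^1(\bar\Omega)$ is already provided by Proposition~\ref{prop:mfg-alt-FPsol}, so only the equation, the boundary condition and the mass constraint remain. From~\eqref{eq:mfg-alt-FP-derivs} (and using Property~(1) of Proposition~\ref{prop:mfg-alt-hjb-props} to replace $\max(0,(V^k)')$ by $(V^k)'$), I would first write
\[ -\tfrac{\sigma^2}{2}(m^k)' + \Bigl[k + \bigl(\tfrac{\gamma}{w}(V^k)'\bigr)^{\frac{\gamma}{1-\gamma}}\Bigr] m^k = 0 \qquad \text{on } \bar\Omega, \]
which is immediate from the first line of~\eqref{eq:mfg-alt-FP-derivs}. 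Differentiating this identity gives~\eqref{eq:mfg-alt-FP}. For the boundary condition~\eqref{eq:mfg-alt-FP-bc} I would invoke $(V^k)'(0) = (V^k)'(\bar z) = 0$ (from~\eqref{eq:mfg-alt-HJB-bc}), which makes the extra term vanish and reduces the displayed identity to exactly $-\tfrac{\sigma^2}{2}(m^k)' + k m^k = 0$ at the endpoints. The mass constraint~\eqref{eq:mfg-alt-FP-ic} holds by construction because we divided $\bar m^k$ by $\|\bar m^k\|_1$.

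For uniqueness, let $m^k_1$ and $m^k_2$ be two $C^2(\Omega)\cap C^1(\bar\Omega)$ solutions. Integrating~\eqref{eq:mfg-alt-FP} once on $[0,z]$ yields, for $i=1,2$,
\[ -\tfrac{\sigma^2}{2}(m^k_i)'(z) + \Bigl[k + \bigl(\tfrac{\gamma}{w}(V^k)'(z)\bigr)^{\frac{\gamma}{1-\gamma}}\Bigr] m^k_i(z) = C_i, \]
for some constant $C_i \in \R$. Evaluating at $z=0$ and using $(V^k)'(0)=0$ together with the boundary condition~\eqref{eq:mfg-alt-FP-bc} gives $C_i = 0$. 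Each $m^k_i$ therefore solves the linear first-order ODE $(m^k_i)' = \tfrac{2}{\sigma^2}\bigl[k+(\tfrac{\gamma}{w}(V^k)')^{\frac{\gamma}{1-\gamma}}\bigr] m^k_i$, whose solution is determined up to a multiplicative constant and is precisely $\bar m^k$ from~\eqref{eq:mfg-alt-FPsol}. Hence $m^k_i = c_i \bar m^k$ for some $c_i>0$, and the normalization~\eqref{eq:mfg-alt-FP-ic} forces $c_1 = c_2 = 1/\|\bar m^k\|_1$, giving $m^k_1 = m^k_2$.

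I do not anticipate a serious obstacle here: the whole argument hinges on the observation that~\eqref{eq:mfg-alt-FP} with the Robin-type boundary condition~\eqref{eq:mfg-alt-FP-bc} is exactly the derivative of a first-order linear ODE whose integration constant is forced to be zero by the boundary behaviour of $(V^k)'$. The only small point to check carefully is that $(V^k)'(0)=(V^k)'(\bar z)=0$ is really what makes the drift term in~\eqref{eq:mfg-alt-FP-bc} reduce to $k m^k$; this is where the HJB boundary condition~\eqref{eq:mfg-alt-HJB-bc} is used and is what makes Definition~\ref{def:mfg-alt-FPsol} compatible with~\eqref{eq:mfg-alt-FP-bc} rather than with, say, pure Neumann data.
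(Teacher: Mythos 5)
Your proposal is correct, and the existence half is exactly the paper's: cite Proposition~\ref{prop:mfg-alt-FPsol} for regularity, differentiate the first line of~\eqref{eq:mfg-alt-FP-derivs} to recover~\eqref{eq:mfg-alt-FP}, observe that $(V^k)'$ vanishes at the endpoints to get~\eqref{eq:mfg-alt-FP-bc}, and note the normalisation gives~\eqref{eq:mfg-alt-FP-ic}. (Your remark that $\max(0,(V^k)')=(V^k)'$ by Property~(1) of Proposition~\ref{prop:mfg-alt-hjb-props} is the small point that reconciles the formula in Definition~\ref{def:mfg-alt-FPsol} with the $\max$ appearing in~\eqref{eq:mfg-alt-FP}.)

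For uniqueness you take a different, somewhat more elementary route than the paper. You integrate~\eqref{eq:mfg-alt-FP} once, use the boundary condition~\eqref{eq:mfg-alt-FP-bc} at $z=0$ (together with $(V^k)'(0)=0$) to force the integration constant to vanish, and then solve the resulting first-order linear ODE explicitly, leaving only a multiplicative constant fixed by the normalisation. The paper instead first shows~\eqref{eq:mfg-alt-model-FP} is equivalent to the divergence-form system $\bigl(\bar m^k(m^k/\bar m^k)'\bigr)'=0$ with no-flux data, then multiplies by $m^k/\bar m^k$, integrates over $\Omega$, and integrates by parts to obtain $\int_\Omega \bar m^k\bigl((m^k/\bar m^k)'\bigr)^2\,dz=0$, whence $m^k/\bar m^k$ is constant; normalisation again pins down the constant. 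Both arguments are valid and exploit the same structural fact that the 1D stationary Fokker--Planck operator is exactly solvable. Your direct-integration proof is cleaner in one dimension (one integration, one evaluation at the endpoint, no test function); the paper's energy identity is the version that would survive in higher dimensions, where one cannot simply "integrate once." One minor technical point you should make explicit: since $m^k\in C^2(\Omega)\cap C^1(\bar\Omega)$, the integration should be done over $[\epsilon,z]$ and then $\epsilon\to 0$, using the continuity of $(m^k)'$ and $(V^k)'$ up to the boundary, before invoking~\eqref{eq:mfg-alt-FP-bc} at $z=0$.
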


\begin{proof}
    Take $m^k$ defined in Definition~\ref{def:mfg-alt-FPsol}. Then, $m^k \in C^2(\Omega) \cap C^1\left(\bar{\Omega}\right)$ by Proposition~\ref{prop:mfg-alt-FPsol}. Furthermore, from~\eqref{eq:mfg-alt-FP-derivs}, $m^k$ satisfies~\eqref{eq:mfg-alt-FP},~\eqref{eq:mfg-alt-FP-bc}. Finally, by construction, $m^k$ satisfies~\eqref{eq:mfg-alt-FP-ic}. Therefore, a solution to the auxiliary Fokker--Planck equation~\eqref{eq:mfg-alt-model-FP} exists, it is given by $m^k$, and $m^k \in C^2(\Omega) \cap C^1\left(\bar{\Omega}\right)$. To prove uniqueness we follow the same proof in~\cite{Barker2019}. For brevity we only outline the argument here. First, with $\bar{m}^k$ defined as in~\eqref{eq:mfg-alt-FPsol}, we can use regularity of $\bar{m}^k$ from Proposition~\ref{prop:mfg-alt-FPsol} to show~\eqref{eq:mfg-alt-model-FP} is equivalent to
    \begin{subequations} \label{eq:mfg-alt-FPuniq1}
        \begin{align}
            & m^k, \frac{m^k}{\bar{m}^k} \in H^1(\Omega) \\
            & \left( \bar{m}^k\left(\frac{m^k}{\bar{m}^k}\right)'\right)' = 0 \label{eq:mfg-alt-FPuniq1c} \\
            & \left. \bar{m}^k\left(\frac{m^k}{\bar{m}^k}\right)' \right|_{z = 0,\bar{z}} = 0 \, , \quad \int_{\Omega} m^k~dz = 1 \, .
        \end{align}
    \end{subequations}
    Then, by multiplying~\eqref{eq:mfg-alt-FPuniq1c} by $\frac{m^k}{\bar{m}^k}$, integrating over $\Omega$ and using integration by parts, the system~\eqref{eq:mfg-alt-FPuniq1} is equivalent to
    \begin{equation} \label{eq:mfg-alt-FPuniq2}
        m^k \in H^1(\Omega) \, , \quad \text{there exists } Z > 0 \text{ such that } m^k = \frac{1}{Z} \bar{m}^k \, , \quad \int_{\Omega} m^k~dz = 1 \, .
    \end{equation}
    From the previous results in this section, we have shown there exists a unique solution to~\eqref{eq:mfg-alt-FPuniq2} given by $m^k$ from Definition~\ref{def:mfg-alt-FPsol}. Hence, existence and uniqueness of the auxiliary Fokker--Planck PDE follows from the equivalence between~\eqref{eq:mfg-alt-model-FP} and~\eqref{eq:mfg-alt-FPuniq2}. 
\end{proof}

\subsubsection*{The fixed point problem}

\begin{definition} \label{def:mfg-phi}
    Fix $k = \left(k_{\ell}\right)_{\ell = 1}^L \in [0,\infty)^L$. For $\ell = 1, \ldots, L$, let $V^{k_{\ell}}$ be the unique solution to the auxiliary HJB PDE~\eqref{eq:mfg-alt-model-HJB} with constant $k_{\ell}$, and let $m^{k_{\ell}}$ be the unique solution to the auxiliary Fokker--Planck PDE~\eqref{eq:mfg-alt-model-FP} with constant $k_{\ell}$. Then we define the function $\Phi:[0,\infty)^L \to [0,\infty)^L$ by
    \[ \Phi_{\ell}(k) = \sum_{\ell'=1}^L A_{\ell'} p(\ell,\ell') \int_0^{\bar{z}} z m^{k_{\ell'}}(z)~dz \, , \quad \ell = 1, \ldots, L \, . \]
\end{definition}

\begin{proposition} \label{prop:mfg-phi-bound}
    The function $\Phi$ defined in Definition~\ref{def:mfg-phi} is bounded. Furthermore, defining $P$ as the $L \times L$ matrix with entries $P_{\ell,\ell'} = p(\ell,\ell')$ and $A$ as the column vector $(A_1,\ldots, A_L)^T$, then
    \[ 0 \leq \left\|\Phi(k)\right\|_1 \leq \bar{z} \left\| PA \right\|_1 \, , \]
    where the 1--norm $\|\cdot\|_1$ is defined as $\|x\|_1 = \sum_{\ell = 1}^L |x_{\ell}|$ for any $x \in \R^L$.
\end{proposition}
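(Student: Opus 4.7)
The plan is to bound each component $\Phi_\ell(k)$ pointwise in $\bar\Omega$ using the fact that $m^{k_{\ell'}}$ is a probability density on $[0,\bar z]$, and then sum over $\ell$. The first step is to observe that non-negativity comes for free: by Definition~\ref{def:mfg-alt-FPsol}, $\bar m^{k_{\ell'}}$ is an exponential and therefore strictly positive, so $m^{k_{\ell'}} > 0$, which together with $z \geq 0$ on $\Omega$ and $A_{\ell'}, p(\ell,\ell') \geq 0$ gives $\Phi_\ell(k) \geq 0$ for each $\ell$.

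For the upper bound I would use the trivial estimate $z \leq \bar z$ on $\Omega$, combined with the normalisation~\eqref{eq:mfg-alt-FP-ic} guaranteed by Theorem~\ref{thm:mfg-alt-FP-xu}, namely
\begin{equation*}
\int_0^{\bar z} z\, m^{k_{\ell'}}(z)\, dz \;\leq\; \bar z \int_0^{\bar z} m^{k_{\ell'}}(z)\, dz \;=\; \bar z.
\end{equation*}
Substituting this into Definition~\ref{def:mfg-phi} yields
\begin{equation*}
0 \;\leq\; \Phi_\ell(k) \;\leq\; \bar z \sum_{\ell'=1}^L A_{\ell'}\, p(\ell,\ell') \;=\; \bar z\, (PA)_\ell,
\end{equation*}
where in the last identity I use the definition of the matrix $P$ with $P_{\ell,\ell'} = p(\ell,\ell')$ and the column vector $A$.

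Finally I would sum over $\ell$. Since $(PA)_\ell \geq 0$ (because $P$ and $A$ have non-negative entries), the $1$-norm removes the absolute values and one obtains
\begin{equation*}
\|\Phi(k)\|_1 \;=\; \sum_{\ell=1}^L \Phi_\ell(k) \;\leq\; \bar z \sum_{\ell=1}^L (PA)_\ell \;=\; \bar z\, \|PA\|_1,
\end{equation*}
which is the desired bound. There is no serious obstacle here: the statement is essentially a direct consequence of the fact that $m^{k_{\ell'}}$ has been normalised to a probability density on a bounded interval; the only mild subtlety is keeping track of signs so that the $1$-norms can be identified with plain sums, which is handled by the non-negativity of $p(\ell,\ell')$, $A_{\ell'}$, and $m^{k_{\ell'}}$ established in the preceding results.
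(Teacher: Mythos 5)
Your proof is correct and takes essentially the same approach as the paper: bound each component $\Phi_\ell(k)$ using $z \leq \bar z$ and the normalisation $\int_\Omega m^{k_{\ell'}}\,dz = 1$, then sum over $\ell$ and identify the double sum $\sum_{\ell,\ell'} A_{\ell'} p(\ell,\ell')$ with $\|PA\|_1$.
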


\begin{remark}
    Due to this proposition, we can define $\zeta = \bar{z} \left\| PA \right\|_1$ and consider only the restriction of $\Phi$ to $[0,\zeta]^L$, which we will still denote by $\Phi$ for convenience.
\end{remark}

\begin{proof}
    Take $\ell = 1, \ldots, L$. Then $\Phi_{\ell}(k) = \sum_{\ell'=1}^L A_{\ell'} p(\ell,\ell') \int_0^{\bar{z}} z m^{k_{\ell'}}(z)~dz \geq 0$, since $p(\ell,\ell') \geq 0$ and $m^{k_{\ell'}} \geq 0$. Similarly, since $m^{k_{\ell'}}$ is a probability distribution, $\Phi_{\ell}(k) \leq \bar{z} \sum_{\ell'=1}^L A_{\ell'} p(\ell,\ell') \int_0^{\bar{z}} m^{k_{\ell'}}(z)~dz = \bar{z} \sum_{\ell'=1}^L A_{\ell'} p(\ell,\ell')$. Therefore,
    \[ 0 \leq \sum_{\ell = 1}^L \Phi_{\ell}(k) \leq \bar{z} \sum_{\ell = 1}^L \sum_{\ell' = 1}^L A_{\ell'} p(\ell,\ell') = \bar{z}\left\| PA \right\|_1 \, . \]
\end{proof}

\begin{theorem} \label{thm:mfg-phi-lip}
     The function $\Phi:[0,\zeta]^L \to [0,\zeta]^L$ defined in Definition~\ref{def:mfg-phi} is Lipschitz in the 1--norm on $\R^L$. The Lipschitz constant is given by $\underaccent{\bar}{C}~\max_{\ell = 1, \ldots, L} A_{\ell} P_{\ell}$, where $P_{\ell} = \sum_{\ell' = 1}^L p(\ell',\ell)$ and $\underaccent{\bar}{C}$ depends on $\left\|PA\right\|_1$, but not explicitly on $P$ or $A$.
\end{theorem}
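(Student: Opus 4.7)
The plan is first to reduce the $L$–variable Lipschitz estimate for $\Phi$ to a scalar Lipschitz estimate for the map $k\mapsto m^k$ in $L^1(\Omega;\,z\,dz)$, and then to obtain this scalar estimate from the explicit representation $m^k = \bar m^k/\|\bar m^k\|_1$ of Definition~\ref{def:mfg-alt-FPsol} together with the a priori bounds of Proposition~\ref{prop:mfg-alt-hjb-props}. The main obstacle will be handling the nonlinearity $x\mapsto x^{\gamma/(1-\gamma)}$ in the exponent of $\bar m^k$ near the zeros of $(V^k)'$.

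For the reduction, fix $k,\tilde k\in[0,\zeta]^L$. From Definition~\ref{def:mfg-phi} and the triangle inequality, after exchanging the order of summation over $\ell$ and $\ell'$,
\[
\|\Phi(k) - \Phi(\tilde k)\|_1 \;\le\; \sum_{\ell'=1}^L A_{\ell'}P_{\ell'}\!\int_0^{\bar z}\! z\bigl|m^{k_{\ell'}} - m^{\tilde k_{\ell'}}\bigr|\,dz \;\le\; \max_{\ell}(A_\ell P_\ell)\sum_{\ell'=1}^L \int_0^{\bar z}\! z\bigl|m^{k_{\ell'}} - m^{\tilde k_{\ell'}}\bigr|\,dz,
\]
so it suffices to produce a constant $\underaccent{\bar}{C}$ depending only on $\zeta$ and the fixed model data with $\int_0^{\bar z} z|m^k - m^{\tilde k}|\,dz \le \underaccent{\bar}{C}|k-\tilde k|$ for every $k,\tilde k\in[0,\zeta]$. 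To obtain this I would write $\bar m^k = \exp(u^k)$ with $u^k(z) = \tfrac{2}{\sigma^2}(kz + \int_0^z(\tfrac{\gamma}{w}(V^k)')^{\gamma/(1-\gamma)}\,dy)$, note that Property~(3) yields a $k$–independent two–sided bound $1\le\bar m^k\le C_\ast$ on $[0,\zeta]$ and hence a uniform two–sided bound for $\|\bar m^k\|_1$, and use the splitting
\[
m^k - m^{\tilde k} \;=\; \frac{\bar m^k - \bar m^{\tilde k}}{\|\bar m^k\|_1} + \bar m^{\tilde k}\,\frac{\|\bar m^{\tilde k}\|_1 - \|\bar m^k\|_1}{\|\bar m^k\|_1\,\|\bar m^{\tilde k}\|_1}
\]
to reduce the estimate to a uniform bound on $|\bar m^k - \bar m^{\tilde k}|$. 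The mean value theorem for the exponential then reduces this further to a uniform bound on $|u^k(z) - u^{\tilde k}(z)|$, whose linear part in $k$ is immediate.

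The technical crux is the remaining integral $\int_0^{\bar z}|(\tfrac{\gamma}{w}(V^k)')^{\gamma/(1-\gamma)} - (\tfrac{\gamma}{w}(V^{\tilde k})')^{\gamma/(1-\gamma)}|\,dy$. When $\gamma\ge 1/2$ the map $x\mapsto x^{\gamma/(1-\gamma)}$ is Lipschitz on the bounded range $[0,\|(V^k)'\|_\infty]$ supplied by Property~(3), so Property~(6) immediately gives a pointwise Lipschitz estimate of order $|k-\tilde k|$. When $\gamma<1/2$ the derivative of this map is singular at $0$ and $(V^k)'$ vanishes at the endpoints; here I would upgrade Property~(7) to a $k$–uniform linear vanishing $(V^k)'(z)\ge c\min(z,\bar z - z)$ on $k\in[0,\zeta]$ (exploiting the continuity of $k\mapsto(V^k)''$ induced by Property~(6) and elliptic regularity), which permits splitting the integral into an interior region on which the integrand is pointwise Lipschitz and a thin boundary layer whose contribution is controlled using the Hölder bound $|a^{\gamma/(1-\gamma)} - b^{\gamma/(1-\gamma)}|\le|a-b|^{\gamma/(1-\gamma)}$. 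Absorbing all dependence on $\sigma,\gamma,w,\bar z,\alpha,B,\rho$ and $\zeta$ into a single constant yields the scalar estimate, and combining with the first reduction gives the announced Lipschitz constant $\underaccent{\bar}{C}\,\max_\ell A_\ell P_\ell$.
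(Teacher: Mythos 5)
Your reduction to the scalar estimate $\int_0^{\bar z} z\,|m^k - m^{\tilde k}|\,dz \le \underaccent{\bar}{C}\,|k-\tilde k|$ and your treatment of the exponential (two–sided bound on $\bar m^k$, splitting of $m^k - m^{\tilde k}$, mean value theorem reducing matters to $|u^k - u^{\tilde k}|$) match the paper. Your observation that the crux is a $k$–uniform linear lower bound $(V^k)'(z)\ge c\min(z,\bar z - z)$, obtained from Property~(5)/(7) plus continuity of $(V^k)''$ in $k$ and compactness of $[0,\zeta]$, is also exactly the paper's key preliminary step.

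However, your final estimation of $\int_0^{\bar z}\bigl|((V^k)')^{\gamma/(1-\gamma)} - ((V^{\tilde k})')^{\gamma/(1-\gamma)}\bigr|\,dy$ in the case $\gamma<1/2$ does not deliver the claimed Lipschitz bound. Using the Hölder inequality $|a^{\beta}-b^{\beta}|\le|a-b|^{\beta}$ with $\beta=\gamma/(1-\gamma)<1$ on a boundary layer produces a contribution of order $|k-\tilde k|^{\beta}$ from Property~(6), which dominates $|k-\tilde k|$ for small increments; a fixed–width layer therefore yields only Hölder continuity of exponent $\beta<1$, and optimizing the layer width against the interior contribution (whose Lipschitz constant degenerates like $\delta^{\beta-1}$ as the layer width $\delta\to0$) still yields an exponent strictly less than $1$ whenever $\gamma<1/2$. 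The paper avoids this loss by never invoking the crude Hölder bound at all: writing the difference via the fundamental theorem of calculus as $\int_{(V^{\tilde k})'(y)}^{(V^{k})'(y)}\tfrac{\gamma}{1-\gamma}u^{(2\gamma-1)/(1-\gamma)}\,du$ gives the \emph{pointwise} estimate
\[
\bigl|((V^k)')^{\frac{\gamma}{1-\gamma}} - ((V^{\tilde k})')^{\frac{\gamma}{1-\gamma}}\bigr|
\;\le\; \tfrac{\gamma}{1-\gamma}\,\bigl\|(V^k)'-(V^{\tilde k})'\bigr\|_\infty\,
\bigl[\min\!\bigl((V^k)',(V^{\tilde k})'\bigr)\bigr]^{\frac{2\gamma-1}{1-\gamma}},
\]
and the $k$–uniform linear vanishing makes the weight $[\min(\cdot,\cdot)]^{(2\gamma-1)/(1-\gamma)}$ integrable over all of $(0,\bar z)$ because $(2\gamma-1)/(1-\gamma)>-1$ for $\gamma>0$. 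This replaces your Hölder step: the singular behaviour is absorbed into a $k$–independent $L^1$ weight rather than into a reduced power of $|k-\tilde k|$, which is what yields true Lipschitz continuity and hence the contraction property of $\Phi$ on which Theorem~\ref{thm:mfg-xu} relies.
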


\begin{proof}
    First, fix $k \in [0,\zeta]$. From Property~(5) of Proposition~\ref{prop:mfg-alt-hjb-props}, the continuity of $V^k,\left(V^k\right)',\left(V^k\right)''$ with respect to $z$ in $\bar{\Omega}$, and equations~\eqref{eq:mfg-alt-HJB},~\eqref{eq:mfg-alt-HJB-bc}, we find
    \[ \left(V^k\right)''(0) = \frac{2 \rho}{\sigma^2} V^k(0) \geq \frac{2 \rho}{\sigma^2} V^0(0) = \left(V^0\right)''(0) > 0 \, , \]
     with the middle inequality an equality if and only if $k = 0$. Similarly, $\left(V^k\right)''(\bar{z}) \leq \left(V^{\zeta}\right)'' (\bar{z}) < 0$ with the first inequality an equality if and only if $k = 0$. Moreover, $\left(V^k\right)''$ is continuous with respect to $k$ due to~\eqref{eq:mfg-alt-HJB} and continuity of $V^k,\left(V^k\right)'$ with respect to $k$, which was proven in Proposition~\ref{prop:mfg-alt-hjb-props}. Therefore, there exists $\epsilon_1,\epsilon_2 \in (0,1)$ and $C_1,C_2 > 0$, independent of $k$, such that
    \[ \begin{aligned}
        & \left(V^k\right)'(z) = \int_0^z \left(V^k\right)''(y)~dy \geq \int_0^z \left(V^0\right)''(y)~dy \geq C_1 z \, , & \quad &\text{if} \, z \in [0,\epsilon_1] \\
        & \left(V^k\right)'(z) = - \int_z^{\bar{z}} \left(V^k\right)''(y)~dy \geq - \int_z^{\bar{z}} \left(V^{\zeta}\right)''(y)~dy \geq C_2 z \, , & \quad &\text{if} \, z \in [\bar{z} - \epsilon_2,\bar{z}] \, .
    \end{aligned} \]
    Furthermore, by continuity of $\left(V^k\right)'$ with respect to $k$ and compactness of $[0,\zeta]$, there exists $C_3 > 0$ such that $\inf_{k \in [0,\zeta]} \left(V^k\right)'(z) \geq C_3$ if $z \in [\epsilon_1, \bar{z} - \epsilon_2]$. Note that $C_j$ for $j = 1,2,3$ are all independent of $k \in [0,\zeta]$. Therefore, if $\gamma \leq \frac{1}{2}$, for any $k_1,k_2 \in [0,\zeta]$:
    \begin{equation} \label{eq:mfg-int-est-1}
        \begin{aligned}
            \int_0^{\bar{z}} \bigg[ & \min \left( \left(V^{k_1}\right)'(z),\left(V^{k_2}\right)'(z) \right) \bigg]^{\frac{2 \gamma - 1}{1 - \gamma}}~dz \\
            & \leq \int_0^{\epsilon_1} \left( C_1 z \right)^{\frac{2 \gamma - 1}{1 - \gamma}}~dz + \int_{\epsilon_1}^{\bar{z} - \epsilon_2} C_3^{\frac{2 \gamma - 1}{1 - \gamma}}~dz + \int_{\bar{z} - \epsilon_2}^{\bar{z}} \left( C_2 (\bar{z} - z) \right)^{\frac{2 \gamma - 1}{1 - \gamma}}~dz \\
            & \leq \frac{1 - \gamma}{\gamma} (C_1^{\frac{2 \gamma - 1}{1 - \gamma}} + C_2^{\frac{2 \gamma - 1}{1 - \gamma}}) + C_3^{\frac{2 \gamma - 1}{1 - \gamma}} \bar{z} \, ,
        \end{aligned}
    \end{equation}
    while, using Proposition~\ref{prop:mfg-alt-hjb-props}, if $\gamma \geq \frac{1}{2}$
    \begin{equation} \label{eq:mfg-int-est-2}
        \begin{aligned}
            \int_0^{\bar{z}} \bigg[ \max \left( \left(V^{k_1}\right)'(z),\left(V^{k_2}\right)'(z) \right) & \bigg]^{\frac{2 \gamma - 1}{1 - \gamma}}~dz \\ 
            & \leq \bar{z} \left( \frac{\bar{z}^{\alpha}}{(1 - \gamma) B^{\alpha - 1}}\right)^{2 \gamma - 1} \left(\frac{w}{\gamma}\right)^{\frac{\gamma (2 \gamma - 1)}{1 - \gamma}} \, .
        \end{aligned}
    \end{equation}
    Now, with the definition of $\bar{m}^k$ in~\eqref{eq:mfg-alt-FPsol}, for any $k_1,k_2 \in [0,\zeta]$ we have
    \[ \begin{aligned}
            |\bar{m}^{k_1} - \bar{m}^{k_2}| & = \left| e^{\frac{2}{\sigma^2} \left( k_1 z + \int_0^z \left( \frac{\gamma}{w} \left(V^{k_1}\right)'(y) \right)^{\frac{\gamma}{1 - \gamma}}~dy \right)} - e^{\frac{2}{\sigma^2} \left( k_2 z + \int_0^z \left( \frac{\gamma}{w} \left(V^{k_2}\right)'(y) \right)^{\frac{\gamma}{1 - \gamma}}~dy \right)} \right| \\
            & = \left| \frac{2}{\sigma^2} \int_{k_2 z + \int_0^z \left( \frac{\gamma}{w} \left(V^{k_2}\right)'(y) \right)^{\frac{\gamma}{1 - \gamma}}~dy}^{k_1 z + \int_0^z \left( \frac{\gamma}{w} \left(V^{k_1}\right)'(y) \right)^{\frac{\gamma}{1 - \gamma}}~dy} e^{\frac{2}{\sigma^2} u}~du \right| \, .
        \end{aligned} \]
    Then, using the uniform bound on $\left(V^k\right)'(y)$ with respect to $k$ given by Proposition~\ref{prop:mfg-alt-hjb-props}, we get
    \[ \begin{aligned}
        |\bar{m}^{k_1} - \bar{m}^{k_2}| & \leq \frac{2 \bar{C}_1}{\sigma^2} \left| (k_1 - k_2) z + \int_0^z \left[ \left( \frac{\gamma}{w} \left(V^{k_1}\right)'(y) \right)^{\frac{\gamma}{1 - \gamma}} - \left( \frac{\gamma}{w} \left(V^{k_2}\right)'(y) \right)^{\frac{\gamma}{1 - \gamma}} \right]~dy \right| \\
        & \leq \frac{2 \bar{C}_1}{\sigma^2} \left( |k_1 - k_2| z + \left( \frac{\gamma}{w} \right)^{\frac{\gamma}{1 - \gamma}} \int_0^{\bar{z}} \int_{\left(V^{k_2}\right)'(y)}^{\left(V^{k_1}\right)'(y)} \frac{\gamma}{1 - \gamma} u^{\frac{2 \gamma - 1}{1 - \gamma}}~du~dy \right) \, , 
    \end{aligned} \]
    where $\bar{C}_1 = e^{\frac{2 \bar{z}}{\sigma^2} \left( \zeta +  \left[\frac{\gamma \bar{z}^{\alpha}}{(1 - \gamma) w B^{\alpha - 1}} \right]^{\gamma} \right)}$. Then, using Proposition~\ref{prop:mfg-alt-hjb-props} and either~\eqref{eq:mfg-int-est-1} or~\eqref{eq:mfg-int-est-2}, we get
    \begin{equation} \label{eq:mfg-lip-est1}
        \begin{aligned}
            |\bar{m}^{k_1} - \bar{m}^{k_2}| \leq & \frac{2 \bar{C}_1}{\sigma^2} \Bigg( |k_1 - k_2| z + \left( \frac{\gamma}{w} \right) ^{\frac{\gamma}{1 - \gamma}} \frac{\gamma}{1 - \gamma} ||\left(V^{k_1}\right)' - \left(V^{k_2}\right)'||_{\infty} \\
            & \quad \quad \quad \quad \, \, \int_0^z \max \Big[ \left( \left(V^{k_1}\right)'(y)\right)^{\frac{2 \gamma - 1}{1 - \gamma}},\left(\left(V^{k_2}\right)'(y) \right)^{\frac{2 \gamma - 1}{1 - \gamma}}\Big]~dy \Bigg) \\
            \leq & \frac{2 \bar{C}_1}{\sigma^2} \left( z + \bar{C}_2 \right) |k_1 - k_2|\, ,
        \end{aligned}
    \end{equation}
    where $\bar{C}_2 = \frac{4 \bar{z}}{\sigma^2} \left( \frac{\bar{z}}{(1 - \gamma) B^{\alpha - 1}} \right)^{1 - \gamma} \left( \frac{\gamma}{w} \right)^{\frac{\gamma^2}{1 - \gamma}} \left( C_1^{\frac{2 \gamma - 1}{1 - \gamma}} + C_2^{\frac{2 \gamma - 1}{1 - \gamma}} + \frac{\gamma}{1 - \gamma} C_3^{\frac{2 \gamma - 1}{1 - \gamma}} \bar{z} \right)$, if $\gamma < \frac{1}{2}$. While $\bar{C}_2 = \frac{\gamma}{1 - \gamma} \frac{4 \bar{z}^2}{\sigma^2} \left( \frac{w \bar{z}^{\alpha}}{\gamma(1 - \gamma)B^{\alpha - 1}}\right)^{\gamma}$, if $\gamma \geq \frac{1}{2}$. Note that for any $k \in [0,\zeta]$, $\left\| \bar{m}^k \right\|_1$ satisfies
    \begin{equation} \label{eq:mfg-lip-est2}
            \left| \left| \bar{m}^k \right| \right|_1 = \int_0^{\bar{z}} e^{\frac{2}{\sigma^2} \left[ k z + \int_0^z \left( \frac{\gamma}{w} \left(V^k\right)'(y) \right)^{\frac{\gamma}{1 - \gamma}}~dy \right]}~dz \geq 1 \, ,
    \end{equation}
    as $\left(V^k\right)' \geq 0$. So, for any $k_1, k_2 \in [0,\zeta]$, using~\eqref{eq:mfg-lip-est1} and~\eqref{eq:mfg-lip-est2}, we have
    \begin{equation} \label{eq:mfg-lip-est3}
        \begin{aligned}
            \bigg| \int_0^{\bar{z}} z (m^{k_1} - m^{k_2})~dz \bigg| \leq & \frac{1}{\left\| \bar{m}^{k_1} \right\|_1} \left| \int_0^{\bar{z}} z (\bar{m}^{k_1} - \bar{m}^{k_2})~dz \right| \\
            & + \int_0^{\bar{z}} z \frac{\bar{m}^{k_2}}{\left\| \bar{m}^{k_1} \right\|_1 \left\| \bar{m}^{k_2} \right\|_1}~dz \Big| \left\| \bar{m}^{k_1} \right\|_1 - \left| \bar{m}^{k_2} \right\|_1 \Big| \\
            \leq & 2 \bar{z} \int_0^{\bar{z}} |\bar{m}^{k_1} - \bar{m}^{k_2}|~dz \leq \frac{4 \bar{C}_1 \bar{z}}{\sigma^2} \int_0^{\bar{z}} (z + \bar{C}_2)~dz |k_1 - k_2| \\
            = & \frac{2 \bar{C}_1 \bar{z}^2 (\bar{z} + 2 \bar{C}_2)}{\sigma^2} |k_1 - k_2| := \underaccent{\bar}{C}~|k_1 - k_2| \, .
            \end{aligned}
    \end{equation}
    Now take $k^{(1)},k^{(2)} \in [0,\zeta]^L$. Define $P_{\ell} = \sum_{\ell' = 1}^L p(\ell',\ell)$, then recalling the definition of $\Phi$ given in Definition~\ref{def:mfg-phi} and using~\eqref{eq:mfg-lip-est3}
    \begin{equation} \label{eq:mfg-lip}
        \begin{aligned}
            \left\|\Phi(k^{(1)}) - \Phi(k^{(2)})\right\|_1 = \sum_{\ell = 1}^L \sum_{\ell' = 1}^L A_{\ell'} p(\ell,\ell') \left| \int_0^{\bar{z}} z (m^{k^{(1)}_{\ell'}} - m^{k^{(2)}_{\ell'}})~dz \right| \\
            \leq \underaccent{\bar}{C}~\sum_{\ell' = 1}^L A_{\ell'} P_{\ell'} \left| k^{(1)}_{\ell'} - k^{(2)}_{\ell'} \right| \leq \underaccent{\bar}{C}~\max_{\ell = 1, \ldots, L} A_{\ell} P_{\ell} \left\|k^{(1)} - k^{(2)} \right\|_1 \, ,
        \end{aligned}
    \end{equation}
    which concludes the proof.
\end{proof}

\begin{theorem} \label{thm:mfg-xu}
    For any given data, there exists a solution to the innovation MFG~\eqref{eq:mfg-model}. Furthermore, if $\|PA\|_1$ is fixed, this solution is unique provided $A_{\ell} P_{\ell} < \frac{1}{\underaccent{\bar}{C}}$ for every $\ell = 1, \ldots, L$.
\end{theorem}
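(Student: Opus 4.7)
The plan is to cast the problem as a fixed point equation $\Phi(k)=k$ for the map $\Phi$ of Definition~\ref{def:mfg-phi}, whose range and Lipschitz continuity have already been established in Proposition~\ref{prop:mfg-phi-bound} and Theorem~\ref{thm:mfg-phi-lip}, so the full MFG reduces to a finite-dimensional fixed point problem on $[0,\zeta]^L$.

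For existence, I would first invoke Brouwer's fixed point theorem on the compact convex set $[0,\zeta]^L$. Proposition~\ref{prop:mfg-phi-bound} guarantees $\Phi([0,\zeta]^L) \subseteq [0,\zeta]^L$, and Theorem~\ref{thm:mfg-phi-lip} furnishes continuity (in fact Lipschitz continuity), so Brouwer yields some $\bar{k} \in [0,\zeta]^L$ with $\Phi(\bar{k}) = \bar{k}$. I would then define $V_\ell = V^{\bar{k}_\ell}$ and $m_\ell = m^{\bar{k}_\ell}$, taking the auxiliary solutions produced by Theorems~\ref{thm:mfg-hjb-xu} and~\ref{thm:mfg-alt-FP-xu}. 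The fixed point identity
\[ \bar{k}_\ell = \Phi_\ell(\bar{k}) = \sum_{\ell'=1}^L A_{\ell'} p(\ell,\ell') \int_\Omega z\, m_{\ell'}(z)\, dz \]
means the constant $\bar{k}_\ell$ appearing in~\eqref{eq:mfg-alt-model-HJB} and~\eqref{eq:mfg-alt-model-FP} coincides with the coupling term of the MFG, so that~\eqref{eq:mfg-hjb},~\eqref{eq:mfg-hjb-bc},~\eqref{eq:mfg-fp},~\eqref{eq:mfg-fp-bc},~\eqref{eq:mfg-fp-ic} all hold. The required regularity $V_\ell, m_\ell \in C^2(\Omega)\cap C^1(\bar{\Omega})$ is inherited from Theorem~\ref{thm:mfg-hjb-xu} and Proposition~\ref{prop:mfg-alt-FPsol}.

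For uniqueness, under the smallness hypothesis $A_\ell P_\ell < 1/\underaccent{\bar}{C}$ for every $\ell$, the Lipschitz constant from Theorem~\ref{thm:mfg-phi-lip} is strictly less than one, so $\Phi$ is a strict contraction on the complete metric space $([0,\zeta]^L, \|\cdot\|_1)$. Banach's fixed point theorem then gives a unique $\bar{k}$, and uniqueness of $V^{\bar{k}_\ell}$ and $m^{\bar{k}_\ell}$ from Theorems~\ref{thm:mfg-hjb-xu} and~\ref{thm:mfg-alt-FP-xu} propagates to uniqueness of the MFG solution. To close the loop I would verify that any MFG solution $(m,V)$ produces a fixed point of $\Phi$: defining $k_\ell := \sum_{\ell'} A_{\ell'} p(\ell,\ell') \int_\Omega z\, m_{\ell'}(z)\, dz$, the argument used in Proposition~\ref{prop:mfg-phi-bound} places $k \in [0,\zeta]^L$, and~\eqref{eq:mfg-hjb}--\eqref{eq:mfg-fp-ic} reduce precisely to~\eqref{eq:mfg-alt-model-HJB}--\eqref{eq:mfg-alt-model-FP} with parameter $k_\ell$, forcing $V_\ell = V^{k_\ell}$ and $m_\ell = m^{k_\ell}$ by uniqueness there, and hence $\Phi(k) = k$.

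The main obstacle is not any single estimate---all of the heavy analysis is already concentrated in the auxiliary theorems---but rather the bookkeeping in this last implication, which must rule out MFG solutions lying outside the range of the fixed point construction so that Banach's uniqueness of $\bar{k}$ genuinely delivers uniqueness of $(m,V)$. Once that matching is in place, existence follows from Brouwer and uniqueness from Banach applied to $\Phi$.
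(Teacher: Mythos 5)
Your proposal follows exactly the paper's argument: Brouwer's theorem on $[0,\zeta]^L$ (via Proposition~\ref{prop:mfg-phi-bound} and Theorem~\ref{thm:mfg-phi-lip}) for existence of a fixed point, Banach's contraction mapping theorem under the smallness condition for uniqueness, and the two-way identification of MFG solutions with fixed points of $\Phi$ by substituting $\Phi_\ell(k^*)$ back into the auxiliary systems and invoking their uniqueness. The "bookkeeping" converse you flag as the main obstacle is handled in the paper in precisely the way you sketch, so no gap.
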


\begin{proof}
	From Proposition~\ref{prop:mfg-phi-bound} and Theorem~\ref{thm:mfg-phi-lip}, the function $\Phi: [0,\zeta]^L \to [0,\zeta]^L$ is a continuous function from a convex compact subset of $\R^L$ to itself. Therefore, by Brouwer’s fixed point theorem, $\Phi$ has a fixed point. Furthermore, Theorem~\ref{thm:mfg-phi-lip} shows that $\Phi$ is a Lipschitz function in $\|\cdot\|_1$. The Lipschitz constant is given by $\underaccent{\bar}{C}~\max_{\ell = 1, \ldots, L} A_{\ell} P_{\ell}$, where $\underaccent{\bar}{C}$ depends on $\|PA\|_1$ but not directly on $P_{\ell}$ or $A_{\ell}$. Therefore, for fixed $\|PA\|_1$, $\Phi$ is a contraction map provided $A_{\ell} P_{\ell} < \frac{1}{\underaccent{\bar}{C}}$ for every $\ell = 1, \ldots, L$, and in this case the fixed point is unique.

    Theorems~\ref{thm:mfg-hjb-xu} and~\ref{thm:mfg-alt-FP-xu} proved existence and uniqueness of solutions to equations~\eqref{eq:mfg-alt-model-HJB} and~\eqref{eq:mfg-alt-model-FP} respectively for any $k \in [0,\zeta]$. Now, if $k^*$ is a fixed point of $\Phi$ then $(m^*,V^*) := \left(m^{k^*_{\ell}},V^{k^*_{\ell}}\right)_{\ell = 1}^L$ is a solution to~\eqref{eq:mfg-model}, which can be seen by replacing $k^*_{\ell}$ with $\Phi_{\ell}\left(k^*\right)$ in~\eqref{eq:mfg-alt-model-HJB},~\eqref{eq:mfg-alt-model-FP} for every $\ell = 1, \ldots, L$. Conversely, if $(m^*,V^*)$ is a solution to~\eqref{eq:mfg-alt-model-HJB},~\eqref{eq:mfg-alt-model-FP}, then clearly, by defining $k^*$ co--ordinate wise as $k^*_{\ell} = \sum_{\ell' = 1}^L A_{\ell'} p(\ell,\ell') \int_0^{\bar{z}} z m_{\ell'}(z)~dz$, $k^* \in [0,\zeta]^L$ is a fixed point of $\Phi$. Furthermore, by uniqueness of~\eqref{eq:mfg-alt-model-HJB},~\eqref{eq:mfg-alt-model-FP}, $\left(m^{k^*},V^{k^*}\right) = (m^*,V^*)$. So, existence and uniqueness of solutions to the innovation MFG~\eqref{eq:mfg-model} is equivalent to existence and uniqueness of fixed points of $\Phi$. Hence, there exists a solution to the innovation MFG. Furthermore, this solution is unique, provided $A_{\ell} P_{\ell} < \frac{1}{\underaccent{\bar}{C}}$ for every $\ell = 1, \ldots, L$.
\end{proof}

\begin{remark}
    In practical terms we can guarantee the condition $A_{\ell} P_{\ell} < \frac{1}{\underaccent{\bar}{C}}$ holds for every $\ell = 1, \ldots, L$ provided $L$ is large enough. This is because $\sum_{\ell = 1}^L A_{\ell} = 1$. So, for fixed $\|PA\|_1$, when $L$ is sufficiently large we can take $A_{\ell}$ to be sufficiently small so that $A_{\ell} P_{\ell} <  \frac{1}{\underaccent{\bar}{C}}$
\end{remark}

\section{Numerical simulations}
\subsection{Consumers}
In the previous analysis, we assumed that consumers play a passive role in the model. In particular, the constant $B$ has been fixed. However, in doing so we have not modelled the active nature of consumers in determining the price index $R$. To include this when implementing our numerical methods we return to~\eqref{eq:consumer-pricing} and we normalise economic output to $Y = 1$. Then, by rearranging~\eqref{eq:consumer-pricing} and using the production function $q_{\ell,i} = Z_{\ell,i}$, we get $B = \left[ \frac{1}{N} \sum_{\ell = 1}^L \sum_{i = 1}^{N_{\ell}} Z_{\ell,i}^{\alpha}\right]^{\frac{1}{\alpha - 1}}$. So, as the number of firms in each sector goes to infinity, $B = \left[ \sum_{\ell = 1}^L A_{\ell} \int_{\Omega} z^{\alpha} m_{\ell}(z)~dz \right]^{\frac{1}{\alpha - 1}}$. Note that this now needs to be solved as a fixed point, as $m_{\ell}$ itself depends on $B$.

\subsection{Simulations}
We computed simulations with synthetic data, using the numerical method outlined in Appendix~\ref{sec:appendix}. From an economics perspective it is important to understand how the model affects the sector--level productivity. The purpose of the simulations is to provide initial insights into the role of the modelling parameters and of the network configuration.

\begin{figure}[b!]
    \begin{subfigure}{0.49 \textwidth}
        \centering
        \includegraphics[width = \linewidth]{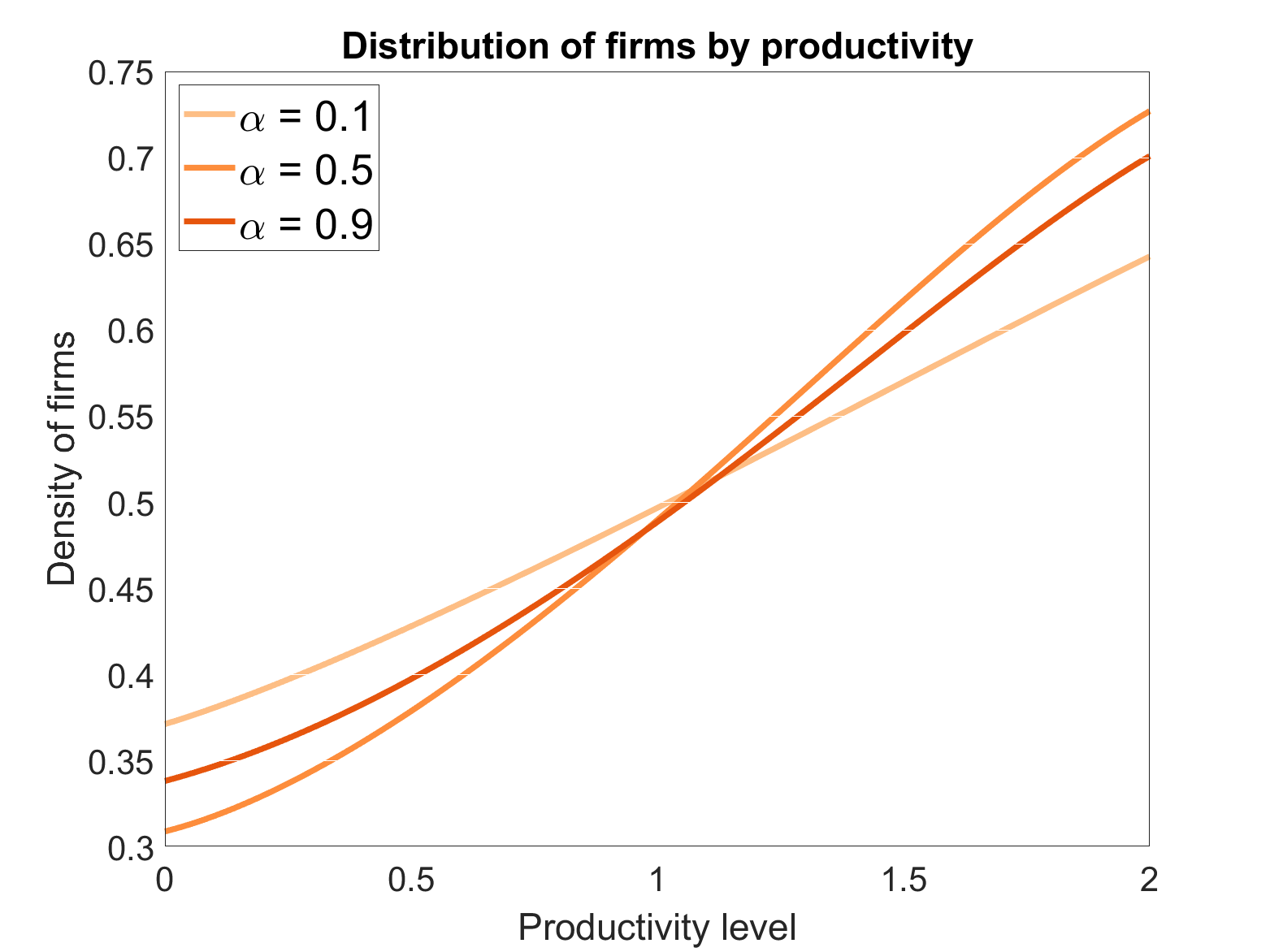}
        \caption{Plot of firm distribution with varying $\alpha$}
        \label{fig:alpha-sim}
    \end{subfigure}
    \begin{subfigure}{0.49 \textwidth}
        \centering
        \includegraphics[width = \linewidth]{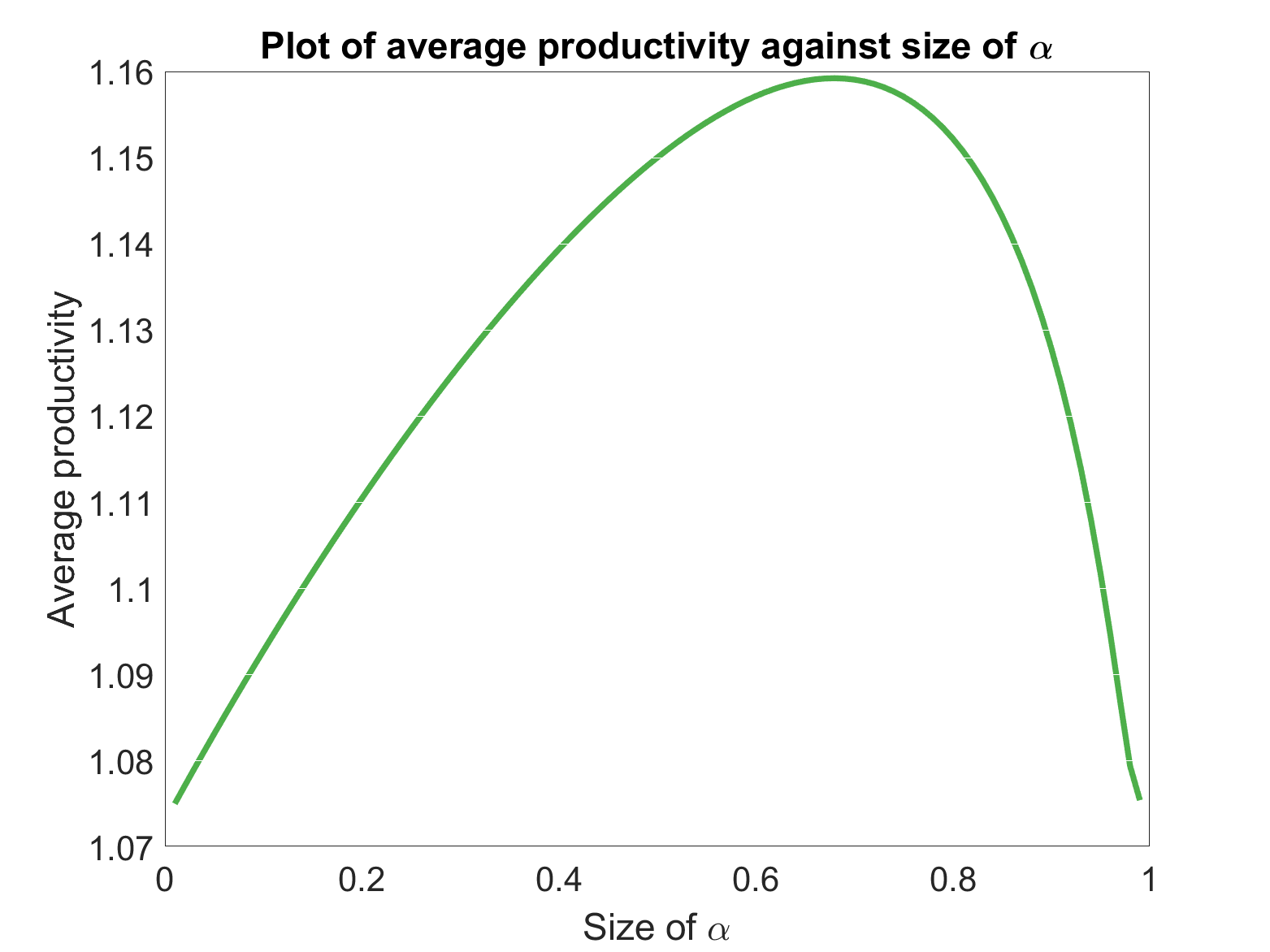}
        \caption{Plot of average productivity against $\alpha$}
        \label{fig:alpha-prod}
    \end{subfigure}
    \begin{subfigure}{0.49 \textwidth}
        \centering
        \includegraphics[width = \linewidth]{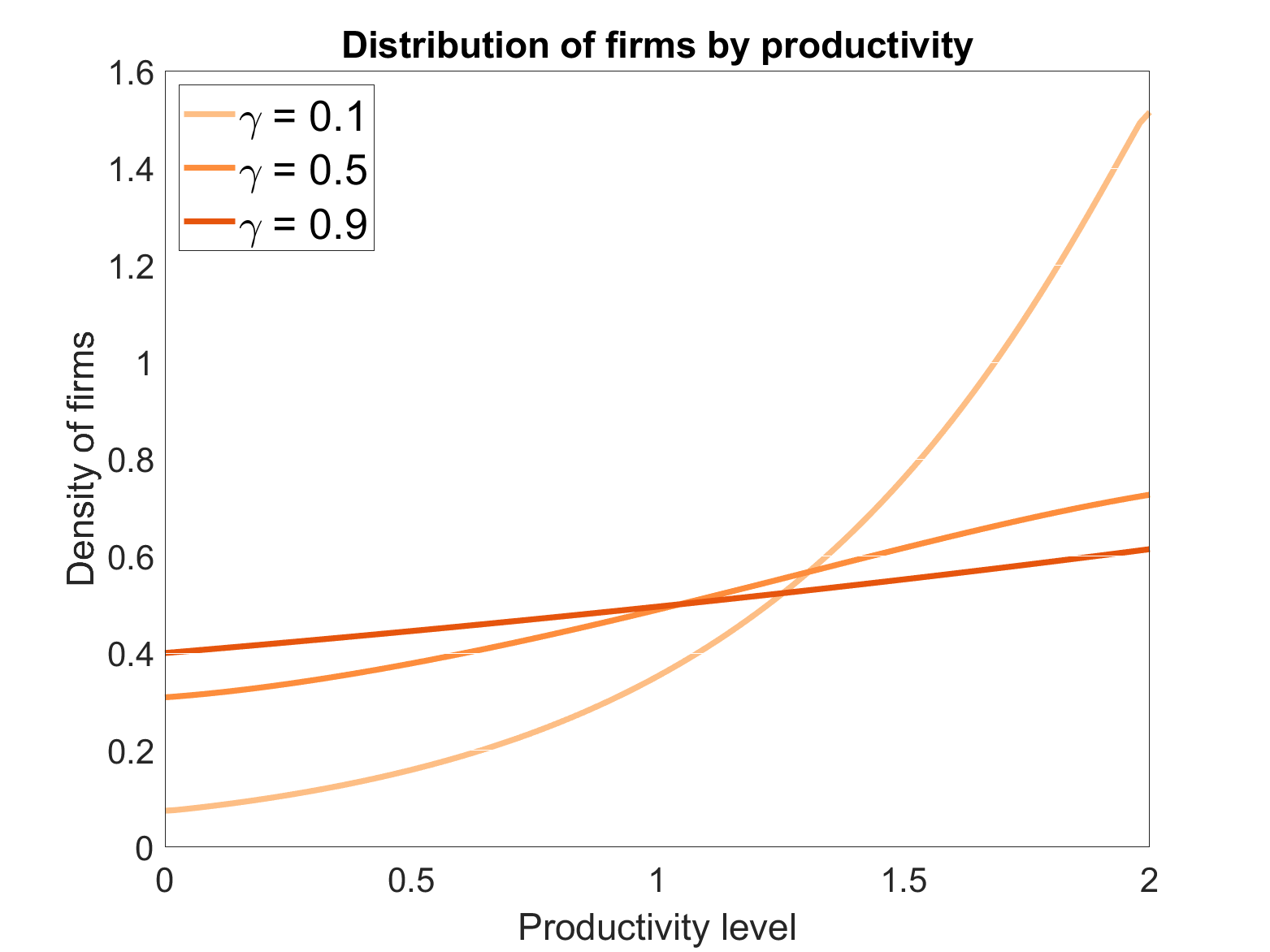}
        \caption{Plot of firm distribution with varying $\gamma$}
        \label{fig:gamma-sim}
    \end{subfigure}
    \begin{subfigure}{0.49 \textwidth}
        \centering
        \includegraphics[width = \linewidth]{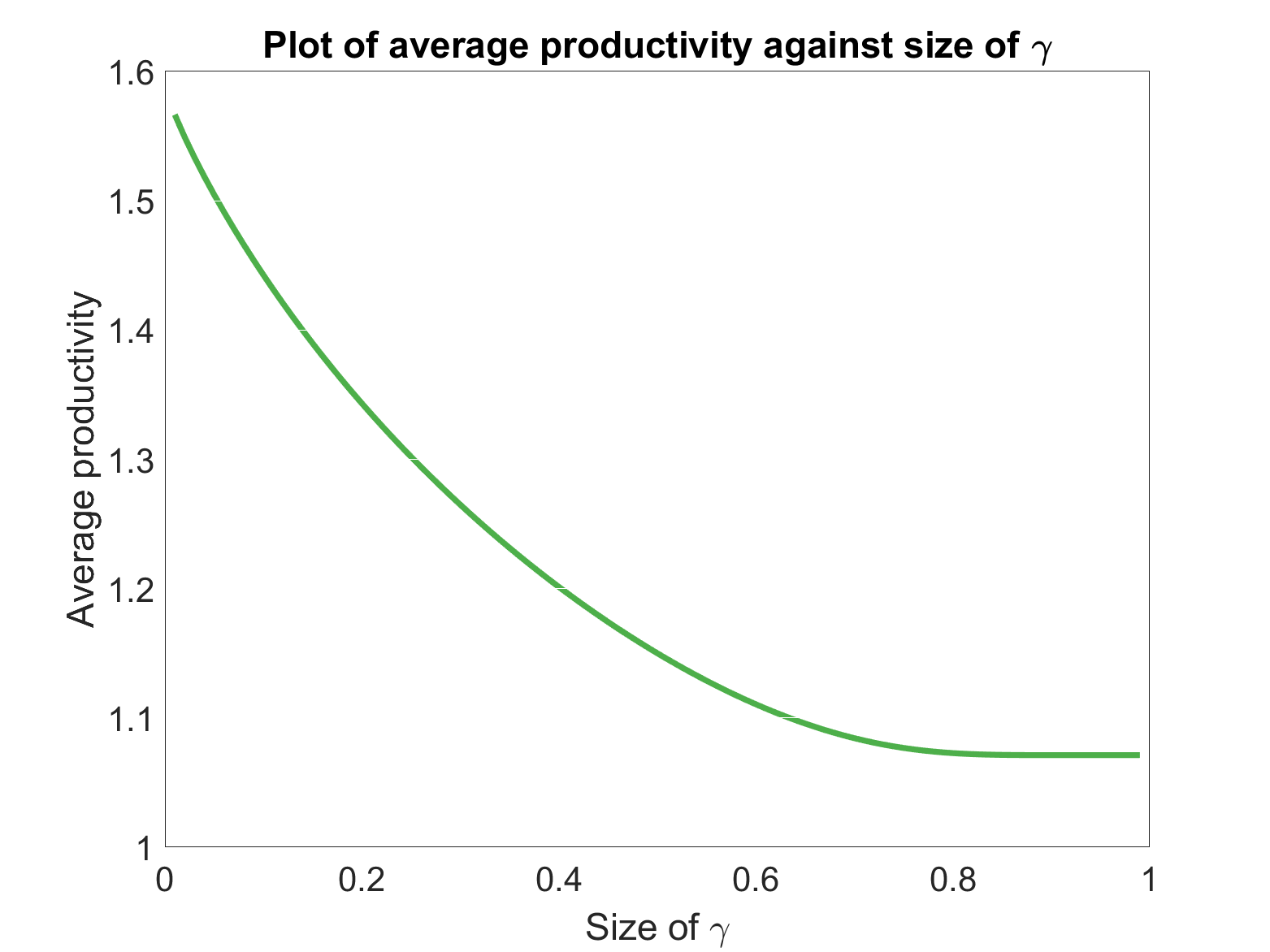}
        \caption{Plot of average productivity against $\gamma$}
        \label{fig:gamma-prod}
    \end{subfigure}
    \begin{subfigure}{0.49 \textwidth}
        \centering
        \includegraphics[width = \linewidth]{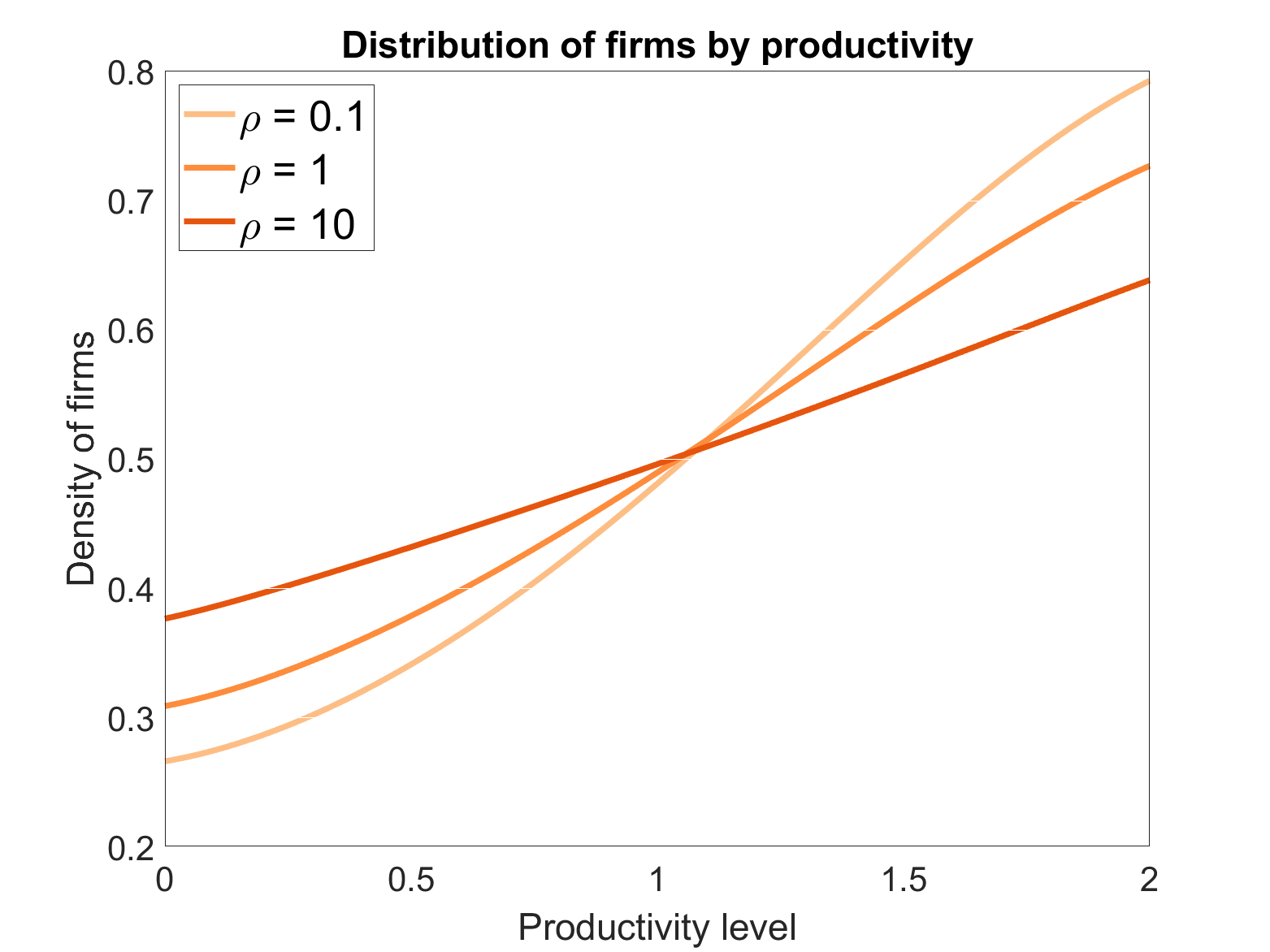}
        \caption{Plot of firm distribution with varying $\rho$}
        \label{fig:rho-sim}
    \end{subfigure}
    \begin{subfigure}{0.49 \textwidth}
        \centering
        \includegraphics[width = \linewidth]{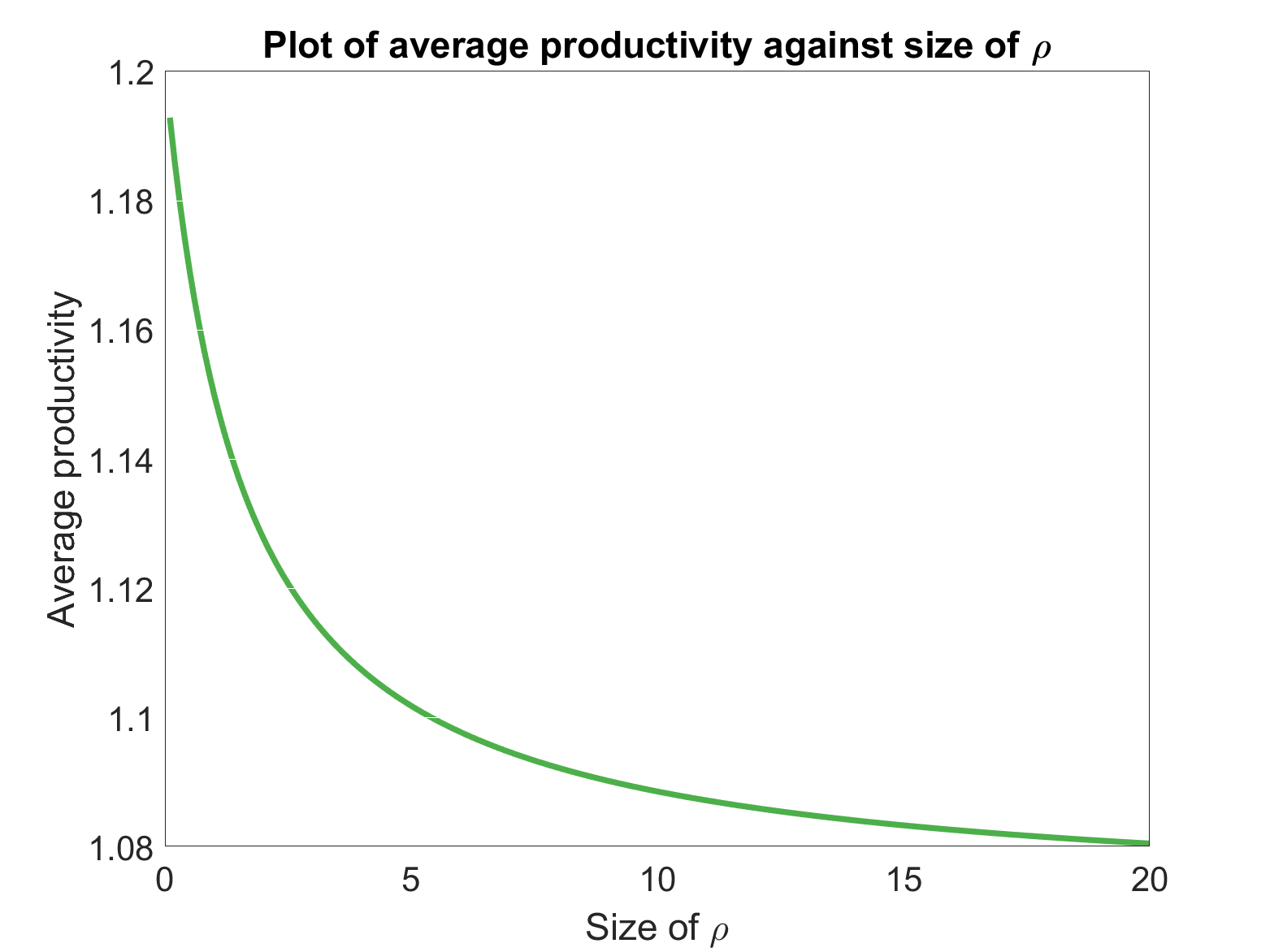}
        \caption{Plot of average productivity against $\rho$}
        \label{fig:rho-prod}
    \end{subfigure}
    \caption{Simulations of MFG with varying $\alpha$, $\gamma$, and $\rho$}
\end{figure}

\subsubsection*{Parameter effects}
The MFG depends on the parameters $\sigma$, $w$, $\alpha$, $\gamma$ and $\rho$. Recall that $\sigma > 0$ is the strength of noise in an individual's dynamics, $w > 0$ is the wage paid to employees, $\alpha \in (0,1)$ is a parameter in the consumer optimisation problem which ensures convexity, and $\gamma \in (0,1)$ is the returns to labour i.e. the inefficiency in converting one unit of labour to one unit of knowledge, it also ensures convexity of the firm--level optimisation problem.

In order to separate the parameter effects from any effects caused by the sector network, we ran simulations with just a single sector. We fixed $\bar{z} = 2$, $A = A_1 = 1$ and $P = 0.1$, where $\bar{z}$ is the maximum productivity level, $A_1$ is the proportion of firms in sector 1 and $P$ is the strength of connection from sector 1 to itself. 
For baseline values, we took $\sigma = 1$, $w = 1$, $\rho = 1$, $\gamma = 0.5$ and $\alpha = 0.5$. For each simulation, we varied one parameter while keeping all others at the baseline level. Figures~\ref{fig:alpha-sim} and~\ref{fig:alpha-prod} show that the relationship between $\alpha$ and the distribution of firms is a complex one. There is some $\alpha^* \in (0, 1)$ where the average productivity reaches a maximum, while on $(0, \alpha^*]$ average productivity is monotonically increasing, and on $[\alpha^*, 1)$ average productivity is monotonically decreasing. Note that, for fixed productivity level and firm distribution, a firm's revenue is $r_{\ell} q_{\ell} = Z_{\ell}^{\alpha} \left[\frac{1}{N} \sum_{\ell' = 1}^L A_{\ell'} \int_{\Omega} z^{\alpha} m_{\ell'}(z)~dz\right]^{-1}$, which consists of a term that increases with respect to $\alpha$ multiplied by a term that decreases with respect to $\alpha$. This results in a competing effect between $\alpha$ and a firm's revenue, which in turn affects a firm's return on investment, and therefore its level of investment in labour. Since labour investment has an increasing effect on average productivity, the competing terms in the revenue equation directly correspond to the behaviour exhibited in figure \ref{fig:alpha-prod}.

Figures~\ref{fig:gamma-sim} and~\ref{fig:gamma-prod} shows the effect of $\gamma$ on the sector--level productivity. Figure~\ref{fig:gamma-prod} shows that as $\gamma$ increases, the average productivity decreases. Since $\gamma$ relates to the inefficiency of converting one unit of labour to one unit of productive work, it seems counter--intuitive at first that average productivity would be a decreasing function of $\gamma$. Recall that the optimal level of employment is given by $h^* = \left(\frac{\gamma}{w} \max(0,V')\right)^{\frac{1}{1 - \gamma}}$, which increases productivity at a rate $(h^*)^{\gamma}$. Then, $h^*$ is increasing with respect to $\gamma$ for fixed $V'$ if and only if $V' \geq \frac{w}{\gamma} e^{\frac{\gamma - 1}{\gamma}}$ and $(h^*)^{\gamma}$ is increasing if and only if $V' \geq \frac{w}{\gamma} e^{\gamma - 1}$. Hence, the effect of $\gamma$ on the average productivity depends on $V'$ and how it changes with respect to $\gamma$.

The effects of $\rho$ and $\sigma$ on the average productivity, shown in Figures~\ref{fig:rho-sim},~\ref{fig:rho-prod} and~\ref{fig:sigma-sim},~\ref{fig:sigma-prod} respectively, show the same trend: average productivity decreases as each parameter increases. The size of $\rho$ is the extent to which a firm discounts future profits. As $\rho$ increases, firms care less about the future state of the system and so they are less willing to invest in labour; it is an investment whose effect is only on the future value of productivity. This results in reduced average productivity in the long run, which can be seen in Figure~\ref{fig:rho-prod}. As $\sigma$ increases, the randomness in productivity evolution of each firm increases. So, the impact of labour on productivity decreases with increasing $\sigma$, and this is reflected in Figure~\ref{fig:sigma-prod}.

Finally, Figures~\ref{fig:w-sim} and~\ref{fig:w-prod} shows that average productivity also decreases with increasing wage, $w$. The wage rate increases the cost of labour. So, we can directly see that as the wage increases, the optimal level of employment, and hence the average productivity, decreases.
    
\begin{figure}[t]
    \begin{subfigure}{0.49 \textwidth}
        \centering
        \includegraphics[width = \linewidth]{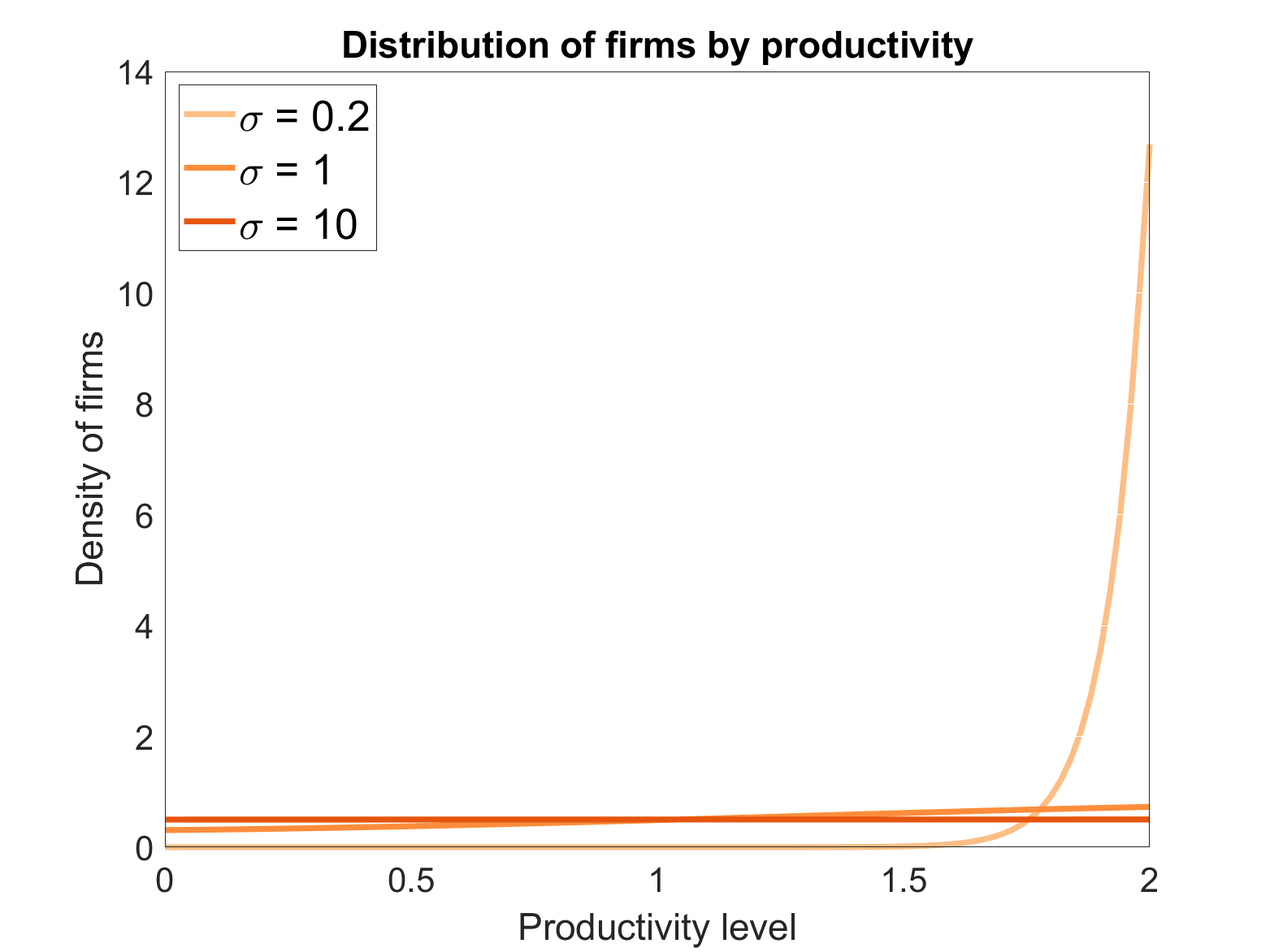}
        \caption{Plot of firm distribution with varying $\sigma$}
        \label{fig:sigma-sim}
    \end{subfigure}
    \begin{subfigure}{0.49 \textwidth}
        \centering
        \includegraphics[width = \linewidth]{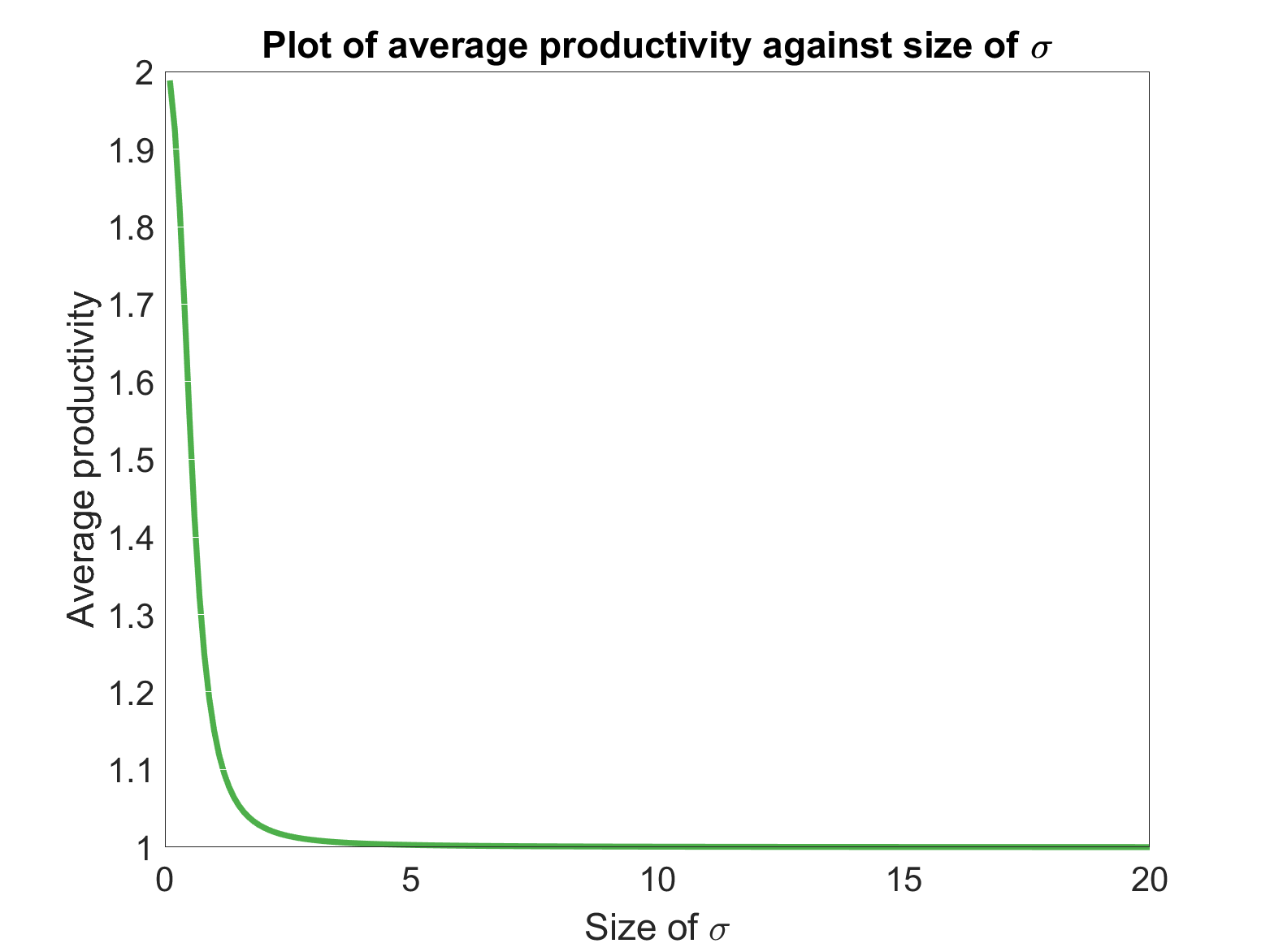}
        \caption{Plot of average productivity against $\sigma$}
        \label{fig:sigma-prod}
    \end{subfigure}
    \begin{subfigure}{0.49 \textwidth}
        \centering
        \includegraphics[width = \linewidth]{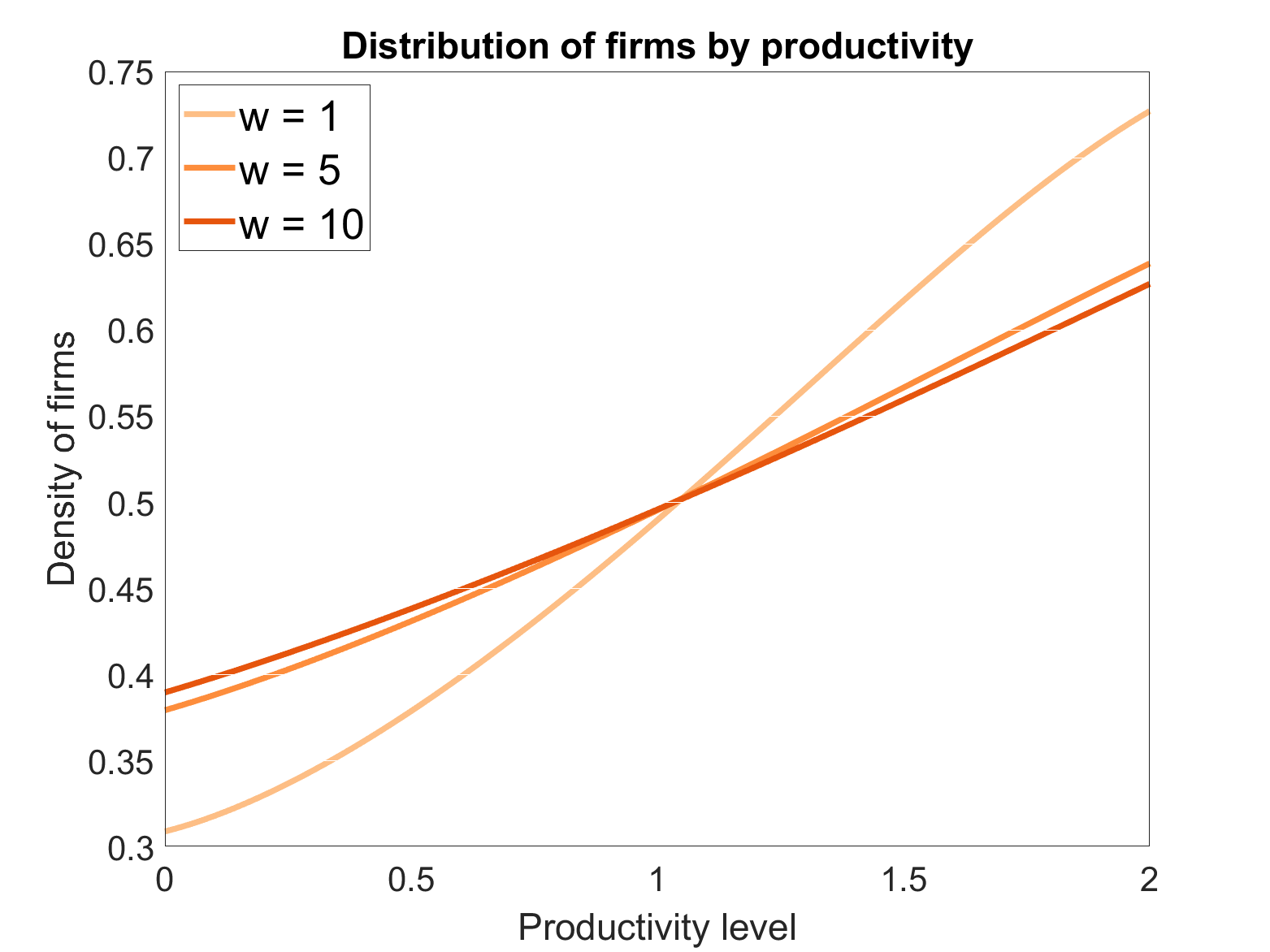}
        \caption{Plot of firm density against productivity with $w = 1,5,10$}
        \label{fig:w-sim}
    \end{subfigure}
    \begin{subfigure}{0.49 \textwidth}
        \centering
        \includegraphics[width = \linewidth]{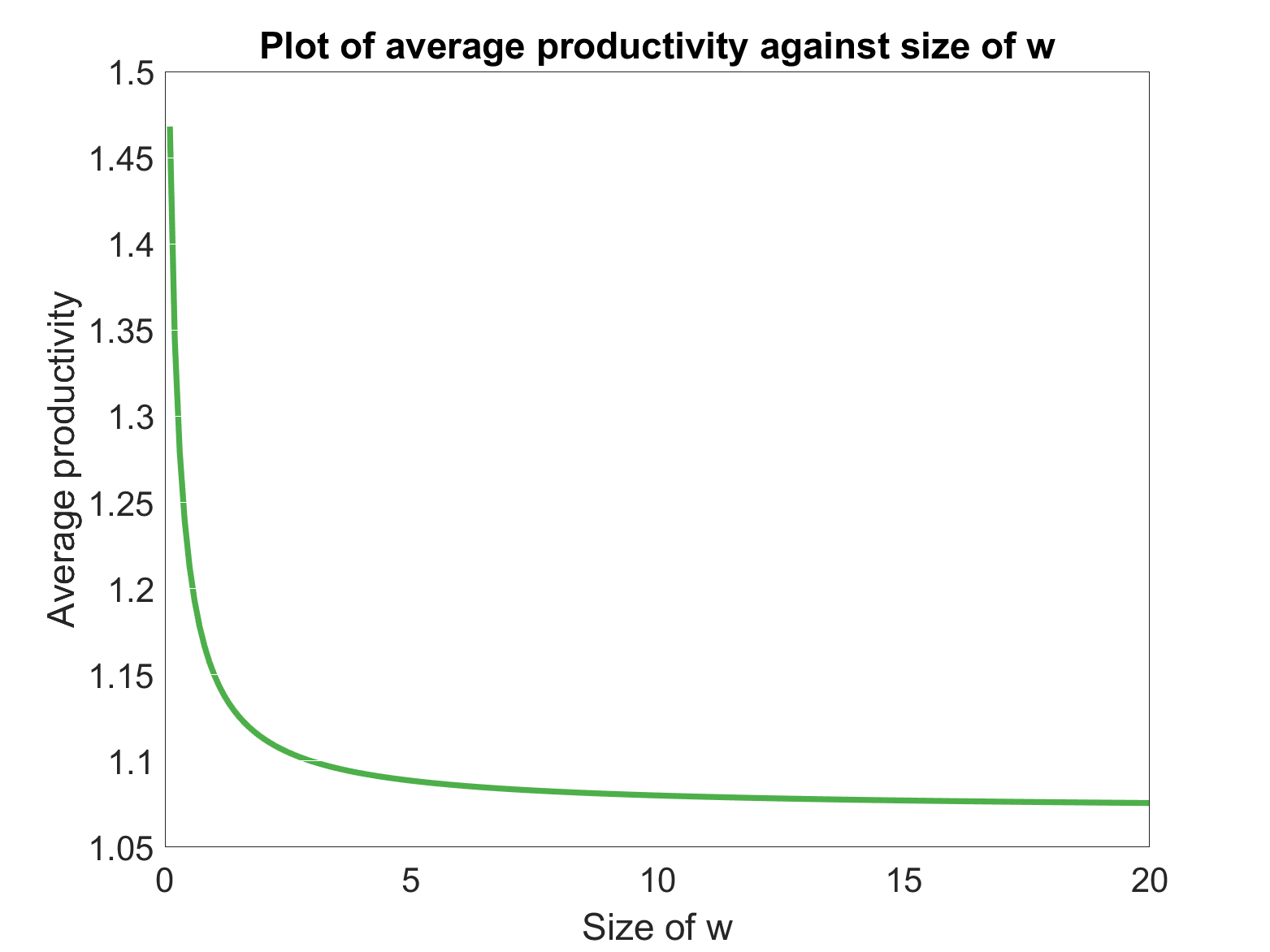}
        \caption{Plot of average productivity against value of $w$}
        \label{fig:w-prod}
    \end{subfigure}
    \caption{Simulations of MFG with varying $\sigma$ and $w$}
     \label{fig:w}
\end{figure}

\subsubsection*{Spillover size effects}
The sector--level network, encoded by the vertex weights $A_{\ell}$ for sector $\ell$, and the edge weights $p(\ell,\ell')$ for a transfer of knowledge from sector $\ell'$ to sector $\ell$, is called the spillover network as it describes how knowledge and productivity spills over from one sector to another. A path in the spillover network is called a spillover path, or just spillover if there is no ambiguity. A path of length 1 from sector $\ell'$ to sector $\ell$ is called a direct spillover, a path of length 2 or greater from sector $\ell'$ to sector $\ell$ is called an indirect spillover, and in both cases sector $\ell$ is called the receiving sector and sector $\ell'$ is the originating sector.

In almost all economic literature, only direct spillovers have been modelled and we are aware of no models that pay attention to the effect indirect spillovers have on economic productivity. In this subsection, we begin investigating how the productivity of a sector is affected by the structure of the spillover network, and in particular the effect of indirect spillovers on productivity. To undertake this investigation, we conducted three types of simulations. The first simulations were to model the six networks in Figure~\ref{fig:networks}, to provide initial insight into how indirect spillover paths  affect the distribution of firms. In the second simulations, we randomly generated spillover networks in models with three sectors and used the collected data to hypothesise a relationship between the average productivity of a sector and the size of spillovers (direct and indirect) it received. In the final simulations, we tested our hypothesis on more randomly generated spillover networks, this time for models with 10 sectors, which more closely resembles the number of sectors in the real economy. We showed that the hypothesis developed accurately describes the relationship between the spillover network and the average productivity of firms, moreover there was a $20\%$ reduction in error when direct and indirect spillovers were taken into account, compared with when only direct spillovers were considered. Therefore, our conclusion from this preliminary investigation is that indirect spillovers have a significant effect on economic productivity in our model and they should not be ignored.

The networks in Figure~\ref{fig:networks} provide insight into how indirect spillovers affect the distribution of firms, in comparison to direct spillovers. In network 1, sector $C$ has one direct spillover, in network 2 it has one direct spillover and one indirect spillover of length 2, and in network 3 it has two direct spillovers. So, the difference in productivity in sector $C$ between network 2 and network 1 will show the effect of an indirect spillover compared with having no spillover, and the difference between networks 3 and 2 will show the effect of an indirect spillover compared with a direct spillover. The differences in density of sector C are plotted in Figure~\ref{fig:network-123-diff-sim}. From the plots, it can be seen that the density of firms is larger at high productivity levels in network 2 compared with network 1 and the density is lower at low productivity levels. This means that the indirect spillover from sector $A$ to sector $C$ has a positive effect on sector C, skewing the distribution towards higher productivity levels. The same behaviour can be seen when we compare sector C in network 3 to network 2, however the effect is an order of magnitude larger. Therefore, although an indirect spillover path has some positive effect compared with no path at all, the effect is less strong than a direct spillover path.

In Figure~\ref{fig:network-456-diff-sim}, sector $D$ of networks four to six were modelled. For sector D: in network 4 there is one indirect spillover with path length 2; network 5 has one indirect spillover with path length 2 and one with path length 3; finally network 6 has an infinite number of indirect spillovers, one for every path length. We have plotted the difference in density of sector D between network 5 and network 4 and between networks 6 and 5. The difference between network 5 and network 4 shows the effect of an indirect spillover of length 3, while the difference between network 6 and network 5 shows the effect of indirect spillovers of all lengths greater than 3. For the difference between network 5 and network 4, the same qualitative result as the difference between network 2 and network 1, in Figure~\ref{fig:network-123-diff-sim}, is observed. This suggests that having spillover paths of greater length do have positive impacts on productivity, but with reduced impact for increased path lengths. Interestingly, sector D in network 6 is less productive than sector D in sector 5. Further investigation showed that if $B$ is fixed, rather than the solution of a fixed point problem, then the effect that more paths result in greater productivity returns, see figure~\ref{fig:network-456-diff-sim-alt}. The reason for this is not immediately obvious and warrants further study. Since the observed change is very small, it can't be ruled out that this result is an artefact from simplifications in the model.

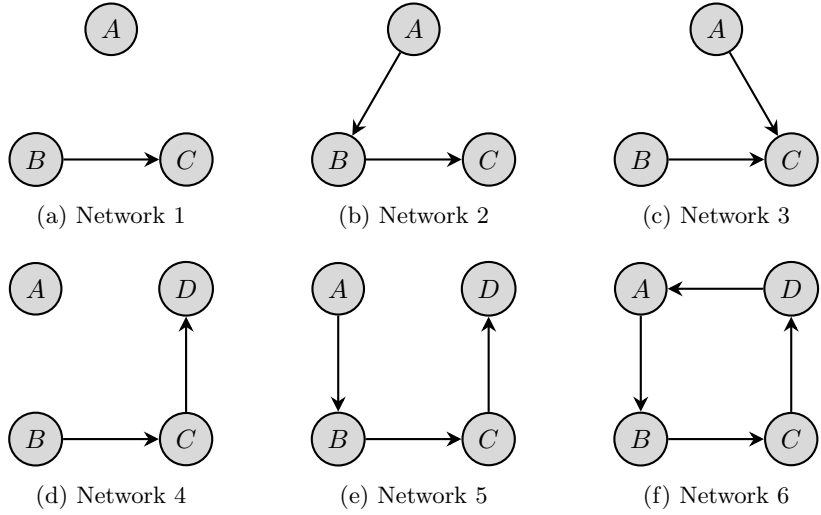
\begin{figure}[t!]
    \centering
    \begin{subfigure}{0.3 \textwidth}
        \centering
        \begin{tikzpicture} [scale = 0.5]
            \node (a) at (2,3.464) [shape = circle, draw = black, fill = gray, fill opacity = 0.3, text opacity = 1, thick] {$A$};
            \node (b) at (0,0) [shape = circle, draw = black, fill = gray, fill opacity = 0.3, text opacity = 1, thick] {$B$};
            \node (c) at (4,0) [shape = circle, draw = black, fill = gray, fill opacity = 0.3, text opacity = 1, thick] {$C$};
            \draw [->, -{Stealth [length = 2mm, width = 2mm]}, thick] (b) -- (c);
        \end{tikzpicture}
        \caption{Network 1}
    \end{subfigure}
    \begin{subfigure}{0.3 \textwidth}
        \centering
        \begin{tikzpicture} [scale = 0.5]
           \node (a) at (2,3.464) [shape = circle, draw = black, fill = gray, fill opacity = 0.3, text opacity = 1, thick] {$A$};
            \node (b) at (0,0) [shape = circle, draw = black, fill = gray, fill opacity = 0.3, text opacity = 1, thick] {$B$};
            \node (c) at (4,0) [shape = circle, draw = black, fill = gray, fill opacity = 0.3, text opacity = 1, thick] {$C$};
            \draw [->, -{Stealth [length = 2mm, width = 2mm]}, thick] (a) -- (b);
            \draw [->, -{Stealth [length = 2mm, width = 2mm]}, thick] (b) -- (c);
        \end{tikzpicture}
        \caption{Network 2}
    \end{subfigure}
    \begin{subfigure}{0.3 \textwidth}
        \centering
        \begin{tikzpicture} [scale = 0.5]
            \node (a) at (2,3.464) [shape = circle, draw = black, fill = gray, fill opacity = 0.3, text opacity = 1, thick] {$A$};
            \node (b) at (0,0) [shape = circle, draw = black, fill = gray, fill opacity = 0.3, text opacity = 1, thick] {$B$};
            \node (c) at (4,0) [shape = circle, draw = black, fill = gray, fill opacity = 0.3, text opacity = 1, thick] {$C$};
            \draw [->, -{Stealth [length = 2mm, width = 2mm]}, thick] (a) -- (c); 
            \draw [->, -{Stealth [length = 2mm, width = 2mm]}, thick] (b) -- (c);
        \end{tikzpicture}
        \caption{Network 3}
    \end{subfigure}
    \begin{subfigure}{0.3 \textwidth}
        \centering
        \begin{tikzpicture} [scale = 0.5]
            \node (a) at (0,4) [shape = circle, draw = black, fill = gray, fill opacity = 0.3, text opacity = 1, thick] {$A$};
            \node (b) at (0,0) [shape = circle, draw = black, fill = gray, fill opacity = 0.3, text opacity = 1, thick] {$B$};
            \node (c) at (4,0) [shape = circle, draw = black, fill = gray, fill opacity = 0.3, text opacity = 1, thick] {$C$};
            \node (d) at (4,4) [shape = circle, draw = black, fill = gray, fill opacity = 0.3, text opacity = 1, thick] {$D$};
            \draw [->, -{Stealth [length = 2mm, width = 2mm]}, thick] (b) -- (c);
            \draw [->, -{Stealth [length = 2mm, width = 2mm]}, thick] (c) -- (d);
        \end{tikzpicture}
        \caption{Network 4}
    \end{subfigure}
    \begin{subfigure}{0.3 \textwidth}
        \centering
        \begin{tikzpicture} [scale = 0.5]
           \node (a) at (0,4) [shape = circle, draw = black, fill = gray, fill opacity = 0.3, text opacity = 1, thick] {$A$};
            \node (b) at (0,0) [shape = circle, draw = black, fill = gray, fill opacity = 0.3, text opacity = 1, thick] {$B$};
            \node (c) at (4,0) [shape = circle, draw = black, fill = gray, fill opacity = 0.3, text opacity = 1, thick] {$C$};
            \node (d) at (4,4) [shape = circle, draw = black, fill = gray, fill opacity = 0.3, text opacity = 1, thick] {$D$};
            \draw [->, -{Stealth [length = 2mm, width = 2mm]}, thick] (a) -- (b);
            \draw [->, -{Stealth [length = 2mm, width = 2mm]}, thick] (b) -- (c);
            \draw [->, -{Stealth [length = 2mm, width = 2mm]}, thick] (c) -- (d);
        \end{tikzpicture}
        \caption{Network 5}
    \end{subfigure}
    \begin{subfigure}{0.3 \textwidth}
        \centering
        \begin{tikzpicture} [scale = 0.5]
            \node (a) at (0,4) [shape = circle, draw = black, fill = gray, fill opacity = 0.3, text opacity = 1, thick] {$A$};
            \node (b) at (0,0) [shape = circle, draw = black, fill = gray, fill opacity = 0.3, text opacity = 1, thick] {$B$};
            \node (c) at (4,0) [shape = circle, draw = black, fill = gray, fill opacity = 0.3, text opacity = 1, thick] {$C$};
            \node (d) at (4,4) [shape = circle, draw = black, fill = gray, fill opacity = 0.3, text opacity = 1, thick] {$D$};
            \draw [->, -{Stealth [length = 2mm, width = 2mm]}, thick] (a) -- (b); 
            \draw [->, -{Stealth [length = 2mm, width = 2mm]}, thick] (b) -- (c);
            \draw [->, -{Stealth [length = 2mm, width = 2mm]}, thick] (c) -- (d);
            \draw [->, -{Stealth [length = 2mm, width = 2mm]}, thick] (d) -- (a);
        \end{tikzpicture}
        \caption{Network 6}
    \end{subfigure}
    \caption{Sector--level networks for simulations in Figures~\ref{fig:network-123-diff-sim} and~\ref{fig:network-456-diff-sim}}
    \label{fig:networks}
\end{figure}

\begin{figure}[t!]
    \centering
    \begin{subfigure}{0.49 \textwidth}
        \centering
        \includegraphics[width = \linewidth]{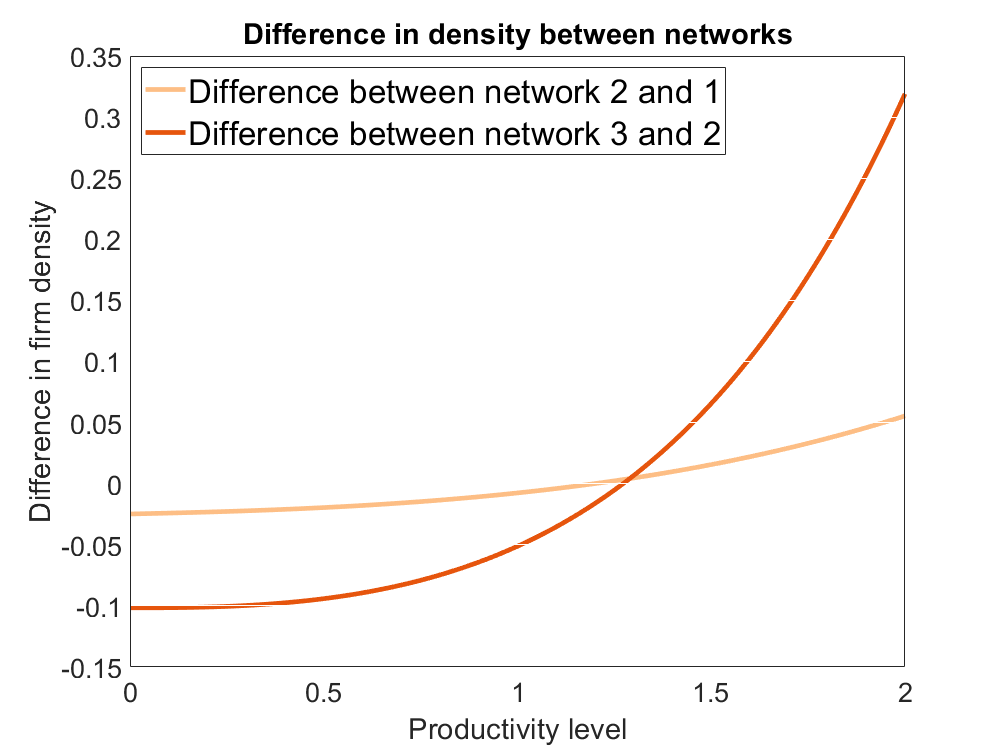}
        \caption{Difference in density of sector C between networks 2 and 1, and networks 3 and 2}
        \label{fig:network-123-diff-sim}
    \end{subfigure}
    \begin{subfigure}{0.49 \textwidth}
        \centering
        \includegraphics[width = \linewidth]{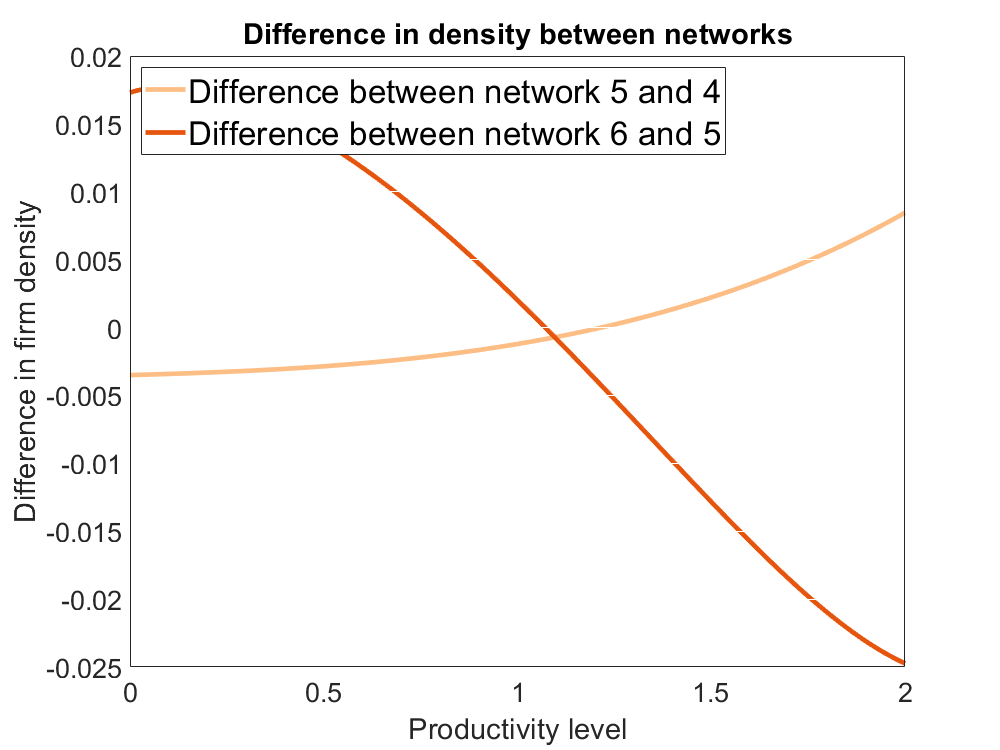}
        \caption{Difference in density of sector D between networks 5 and 4, and networks 6 and 5}
        \label{fig:network-456-diff-sim}
    \end{subfigure}
    \begin{subfigure}{0.49 \textwidth}
        \centering
        \includegraphics[width = \linewidth]{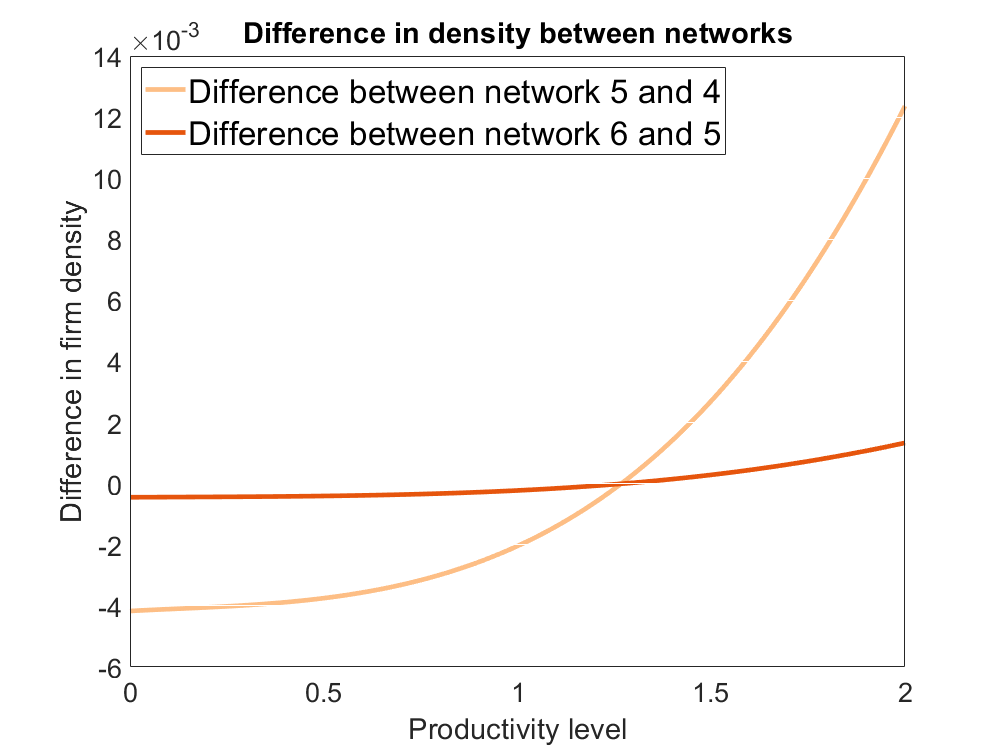}
        \caption{Difference in density of sector D between networks 5 and 4, and networks 6 and 5, with fixed $B = 1$}
        \label{fig:network-456-diff-sim-alt}
    \end{subfigure}
    \caption{Simulations of MFG comparing distribution of firms in sectors C and D with respect to productivity in networks 1, 2 and 3, and networks 4, 5 and 6 from Figure~\ref{fig:networks}}
     \label{fig:network-diff-sim}
\end{figure}

In the second set of simulations, we took a closer look at how the spillover network structure affects the average productivity within each sector. Recall that if, given a network, we know the value of the fixed point, $k^*$, of the function $\Phi$ defined in Definition \ref{def:mfg-phi}. Then the average productivity in sector $\ell$ is $\int_{\Omega} z m_{\ell}(z)~dz = \int_{\Omega} z m^{k^*_{\ell}}(z)~dz$. So, to understand the relationship between average productivity and the network, we first need to understand the relationship between $\int_{\Omega} z m^{k_{\ell}}(z)~dz$ and $k_{\ell}$, for any $k_{\ell} \geq 0$. Then, we also need to understand the relationship between $k^*_{\ell}$ and the $L \times L$ matrix $S$ with entries defined by $S_{\ell,\ell'} = A_{\ell'} p(\ell,\ell')$, because
\[
	k^* = S \left( \int_{\Omega} z m^{k^*_{\ell}}(z)~dz \right)_{\ell = 1}^L \, .
\]
In Figure~\ref{fig:k-sim}, we have plotted $\int_{\Omega} z m^{k}(z)~dz$ against $k$. The relationship appears to approximately follow
\begin{equation} \label{eq:avg-relation}
    \int_{\Omega} z m^{k}(z)~dz = \bar{z} - \frac{b_0}{k^{b_1} + b_2} \, ,
\end{equation}
for some $b_0,b_1,b_2 > 0$, as can be seen by the second line in Figure~\ref{fig:k-sim}. To understand the relationship between the fixed point of $\Phi$ and the matrix $S$, we considered networks of three vertices, with $A_{\ell} = 1/3$ for all $\ell$. We created a random network between the vertices by choosing a connection probability $p$, and making a directed edge between vertices with probability $p$. We then weighted each directed edge with a random weight, chosen from a uniform distribution on $[0,1]$. We repeated this 100 times for each connection probability, and recorded both the size of direct spillovers to each sector and the value of the fixed point of $\Phi$. Figures~\ref{fig:scat-plot-1} and~\ref{fig:scat-plot-2} shows a scatter plot of $k^*_{\ell}$, the $\ell^{th}$ co--ordinate of the fixed point of $\Phi$, against the sum of direct spillover strengths $\sum_{\ell' = 1}^L S_{\ell,\ell'}$. In the simulations with a high connection probability, Figure~\ref{fig:scat-plot-1}, there is a strong linear relationship between $k^*_{\ell}$ and $\sum_{\ell' = 1}^L S_{\ell,\ell'}$. However, with low connection probabilities, Figure~\ref{fig:scat-plot-2}, the simulations tend to follow one of two weaker linear relationships with the row sum.

To understand the relationships further, we can look at the equation that $k^*_{\ell} \in [0,\infty)$ implicitly satisfies: $k^*_{\ell} = \sum_{\ell' = 1}^L S_{\ell,\ell'} \int_{\Omega} z m^{k^*_{\ell'}}~dz$, where $m^{k_{\ell}}$ is defined by~\eqref{eq:mfg-alt-FPsol}. So, if sector $\ell$ receives no spillovers then $k^*_{\ell} = 0$. If it has only direct spillovers, then it is only connected to sectors with no spillovers. So, by defining $f(k) = \int_{\Omega} z m^k~dz$
\begin{equation} \label{eq:k-relation-1}
    k^*_{\ell} = f(0) \sum_{\ell' = 1}^L S_{\ell,\ell'} \, .
\end{equation}
We can see this linear relationship between $k^*_{\ell}$ and $\sum_{\ell' = 1}^L S_{\ell,\ell'}$ in Figures~\ref{fig:scat-plot-3} and~\ref{fig:scat-plot-4}, where we have taken the simulated points in Figure~\ref{fig:scat-plot-2}, and split the data into those points which have only direct spillovers and those that have indirect spillovers as well. In Figure~\ref{fig:scat-plot-3}, where sectors with only direct spillover paths are considered, the linear relationship described by~\eqref{eq:k-relation-1} can be clearly seen.
\begin{figure}[t!]
    \centering
    \includegraphics[width = 0.5 \linewidth]{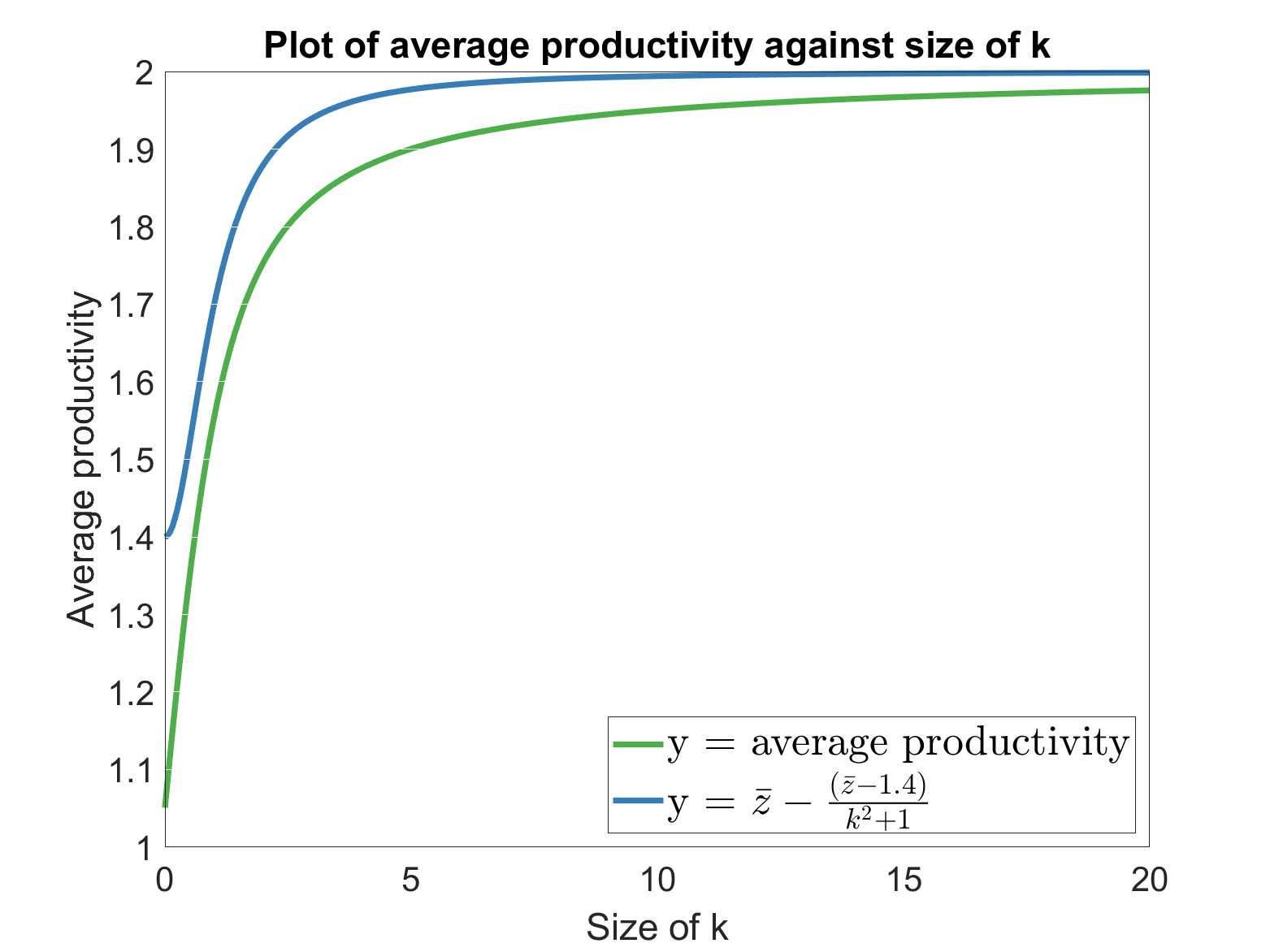}
    \caption{Plot of average productivity against size of $k$ in auxiliary Fokker--Planck equation~\eqref{eq:mfg-alt-model-FP} and plot of $y = \bar{z} - \frac{(\bar{z} - 1.4)}{k^2 + 1}$ for comparison}
    \label{fig:k-sim}
\end{figure}

\begin{figure}[t!]
    \begin{subfigure}{0.49 \textwidth}
        \centering
        \includegraphics[width = \linewidth]{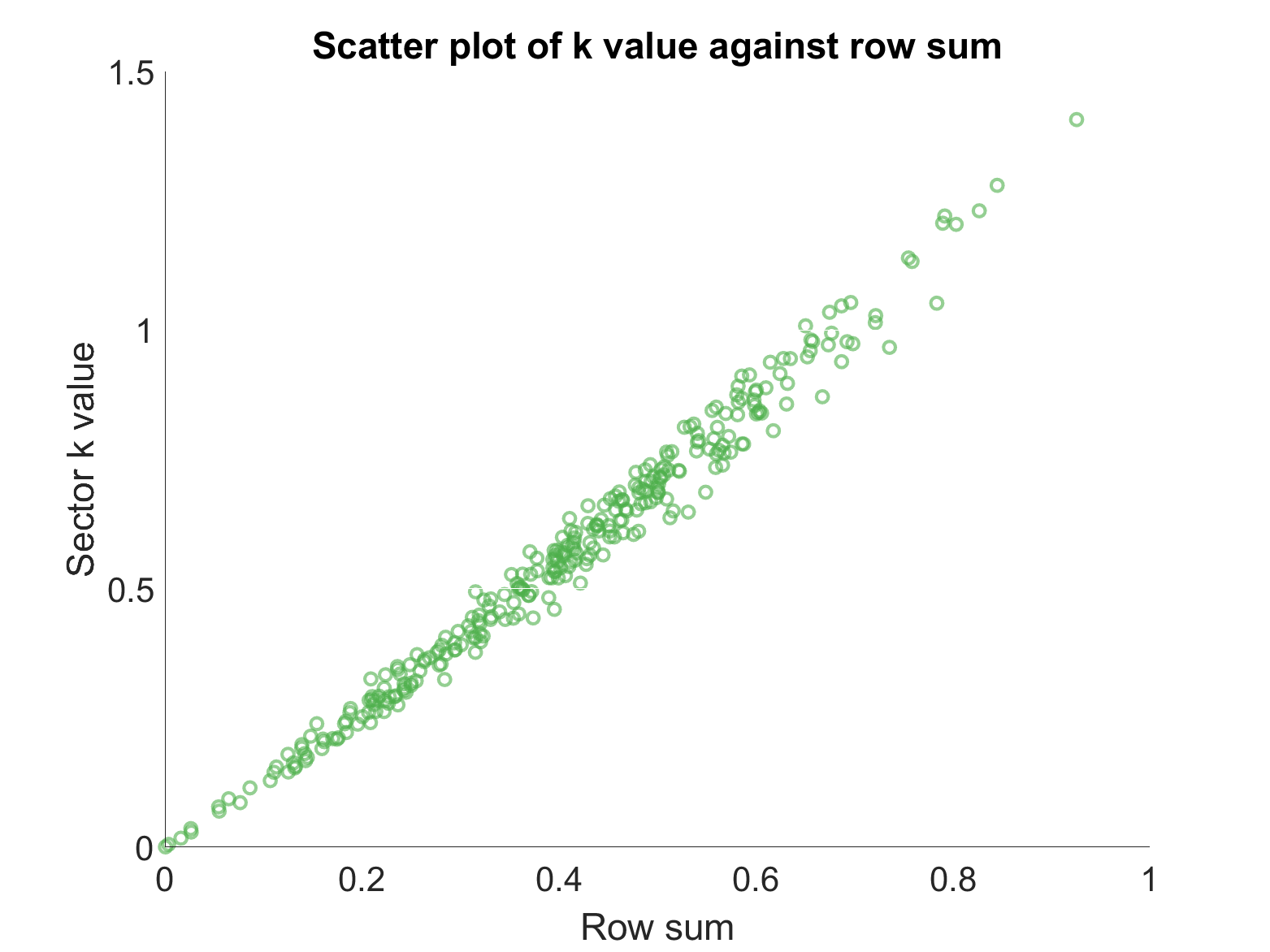}
        \caption{Probability of directed edge $= 0.8$}
        \label{fig:scat-plot-1}
    \end{subfigure}
    \begin{subfigure}{0.49 \textwidth}
        \centering
        \includegraphics[width = \linewidth]{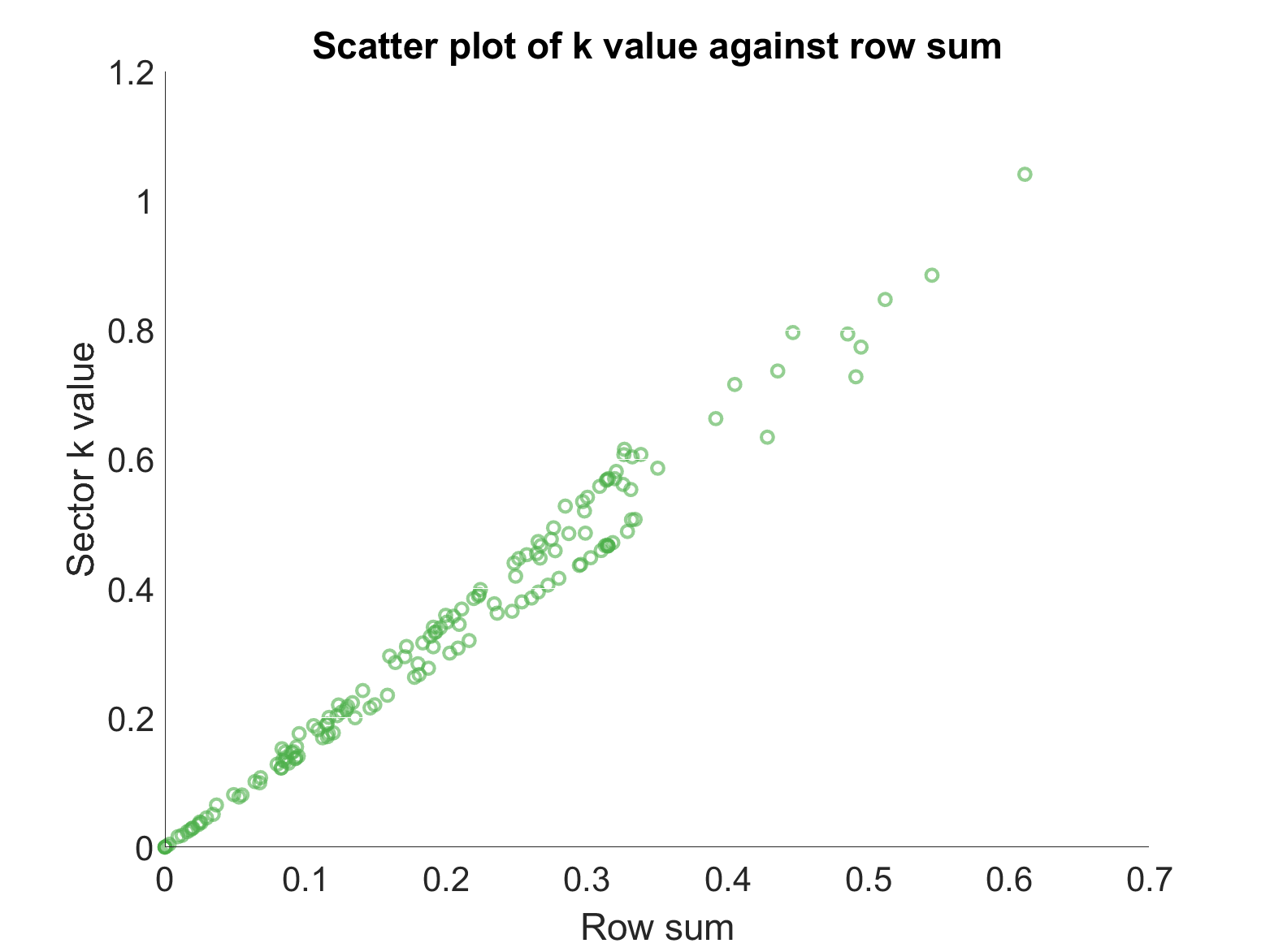}
        \caption{Probability of directed edge $= 0.2$}
        \label{fig:scat-plot-2}
    \end{subfigure}
    \begin{subfigure}{0.49 \textwidth}
        \centering
        \includegraphics[width = \linewidth]{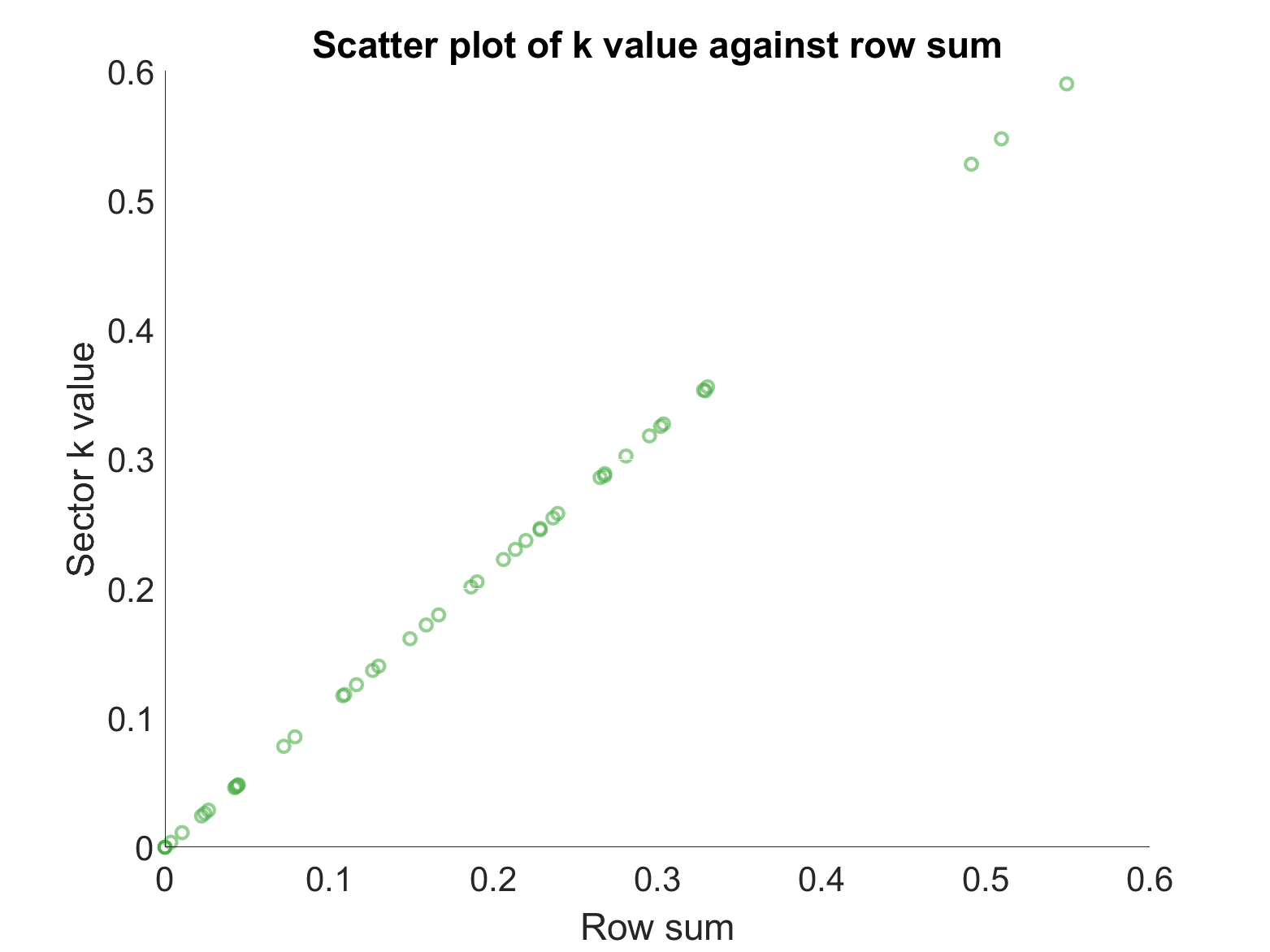}
        \caption{Probability of directed edge $= 0.2$, and length of longest path to sector $= 1$}
        \label{fig:scat-plot-3}
    \end{subfigure}
    \begin{subfigure}{0.49 \textwidth}
        \centering
        \includegraphics[width = \linewidth]{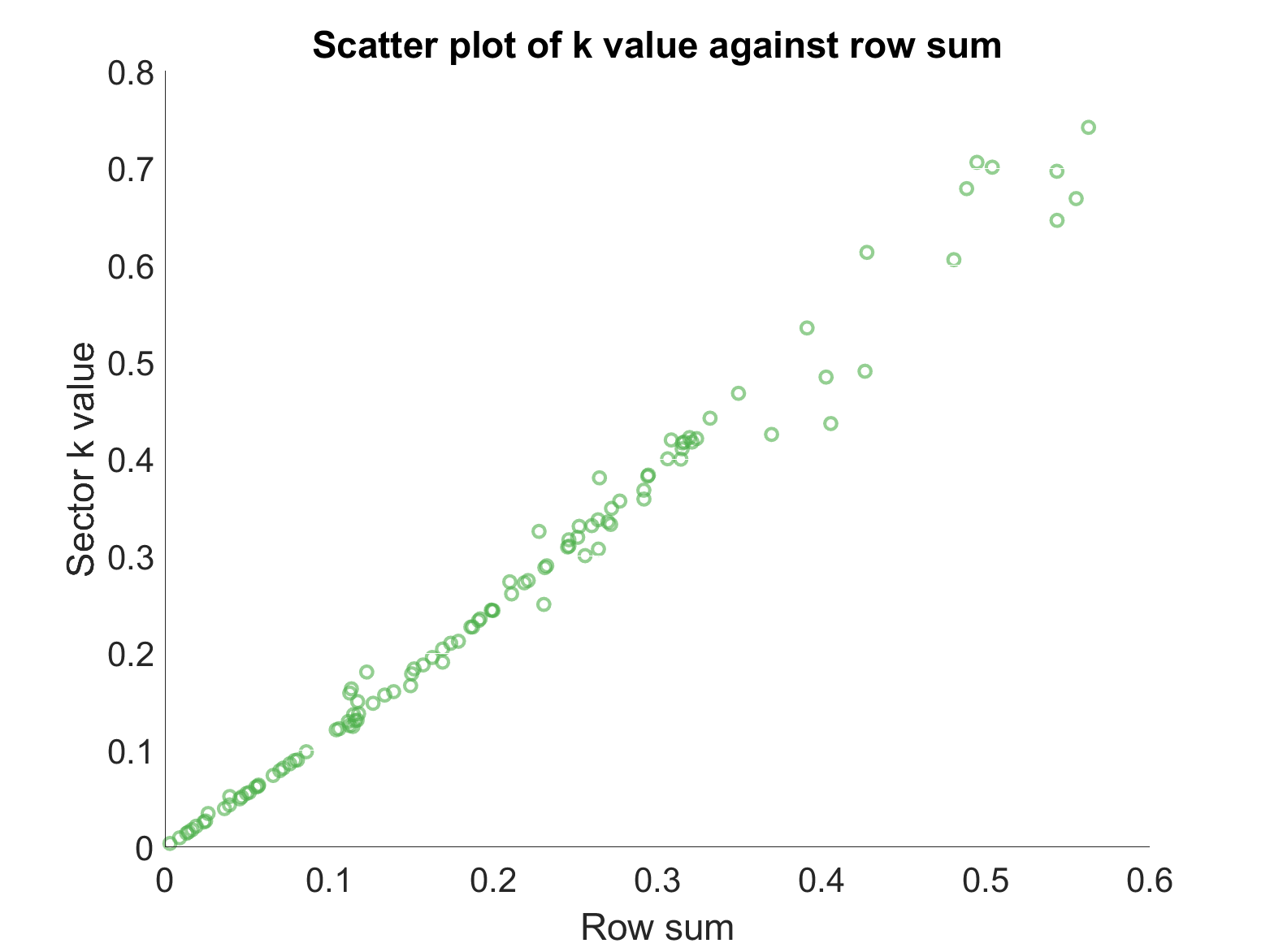}
        \caption{Probability of directed edge $= 0.2$, and length of longest path to sector $> 1$}
        \label{fig:scat-plot-4}
    \end{subfigure}
    \caption{Relationship between $k^*_{\ell}$ and sum of direct spillovers $\sum_{\ell' = 1}^L S_{\ell,\ell'}$}
    \label{fig:scat-plot-14}
\end{figure}

To understand how the value of $k^*_{\ell}$ depends on the matrix $S$ in the case of indirect spillovers, we can return to the definition of the spillover size and $f(k)$. If we assume that $f$ is approximately linear for sectors with indirect spillovers, i.e. $f(k) = f_0 + f_1 k$, then
\begin{equation} \label{eq:comp-spill-relation2}
        k^*_{\ell} = \left(S (f_0 \mathbf{1} + f_1 k^*)\right)_{\ell} \, ,
\end{equation}
where $\mathbf{1}$ is the vector of length $L$ with ones in every entry. Using the identity $(I + f_1 S)^{-1} = \sum_{n = 0}^{\infty} f_1^n S^n$, we can rearrange~\eqref{eq:comp-spill-relation2} 
\begin{equation} \label{eq:k-relation-2}
    k^*_{\ell} = f_0 \sum_{n = 0}^{\infty} f_1^n \left(S^{n + 1} \mathbf{1}\right)_{\ell} \, ,
\end{equation}
which gives a way to estimate the value $k^*_{\ell}$ directly from the initial data. Therefore, combining estimates~\eqref{eq:avg-relation} and~\eqref{eq:k-relation-2}, we can estimate the value of average productivity from the matrix $S$ by
\begin{equation} \label{eq:regression} 
    \int_{\Omega} z m_{\ell}(z)~dz = \bar{z} - \frac{b_0}{\left(f_0 \sum_{\ell'= 1}^L \sum_{n = 0}^{\infty} f_1^n \left(S^{n + 1}\right)_{\ell,\ell'}\right)^{b_1} + b_2} \, .
\end{equation}
The relationship suggests that the average productivity depends on $S^n$ for every $n$ i.e. on indirect spillovers of every path length. Moreover, if $f_1$ is small enough, the effect of a spillover path is decreasing by an order of magnitude for every increase in path length, which agrees with our initial simulations of networks 1--6.

In order to verify the hypothesis, in the final simulations we ran a regression to estimate the parameters $f_0,f_1,b_0,b_1,b_2$ and provide evidence that approximation~\eqref{eq:regression} is accurate. We performed 1000 simulations on networks of ten vertices, with connection probability chosen randomly and uniformly distributed in $[0,1]$, with connection strength chosen randomly and uniformly in $[0,3]$, and with sector sizes $A_{\ell}$ also randomly chosen. We ran a nonlinear regression, of the form~\eqref{eq:k-relation-2}, on sectors with indirect spillovers, to obtain optimal values of $f_0$ and $f_1$. Then, using the optimal values of $f_0$ and $f_1$ we ran a second nonlinear regression, of the form~\eqref{eq:regression}, to find the optimal values of $b_0$, $b_1$ and $b_2$. Table~\ref{tab:regression} gives estimates for the parameters $f_i$ and $b_i$. We found that average productivity does behave approximately according to~\eqref{eq:regression}, with table~\ref{tab:regression} suggesting a statistically significant result. Visually, this can be seen in Figure~\ref{fig:scat-plot-6}, where we plotted~\eqref{eq:regression} using the optimal values of $f_i$ and $b_i$. We also computed estimates for the model
\begin{equation} \label{eq:k-relation-false}
    \int_{\Omega} z m_{\ell}(z)~dz = \bar{z} - \frac{\bar{b}_0}{\left(\bar{f}_0 \sum_{\ell'= 1}^L S_{\ell,\ell'}\right)^{\bar{b}_1} + \bar{b}_2} \, ,
\end{equation}
which assumes average productivity depend on direct spillovers only, and plotted the result in Figure~\ref{fig:scat-plot-6}. Comparing plots~\ref{fig:scat-plot-5} and~\ref{fig:scat-plot-6} shows that the model~\eqref{eq:regression}, which includes the effects of indirect spillovers, provides a more accurate estimate for average productivity than model~\eqref{eq:k-relation-false}, which only accounts for the effect of direct spillovers. This is reconfirmed by the $20\%$ reduction in R--squared error when indirect spillover paths are included in the model. Therefore, indirect spillover paths can not be ignored as a factor determining a sector's productivity.

\section{Conclusion and future research}
We have developed an MFG model of firm--level innovation from a microscopic formulation. The model can be calibrated to fit economic data of spillovers, so its economic validity can be verified. We have been able to prove existence of solutions and, under a smallness assumption on the data, uniqueness. We have investigated numerically how the modelling parameters and the spillover network affects the sector--level productivity, through the development of a simple algorithm that takes advantage of the structure of the proof of existence and uniqueness.

In future work, we hope to compare the MFG model with the socially optimal behaviour, as described by the mean field optimal control problem. We will also use patent--level data to calibrate and test the two models for their accuracy. We hope the comparison between the social optimum and the competitive equilibrium will suggest a method for implementing socially optimal subsidy policies for R\&D.

\begin{table}[b!]
    \centering
    \begin{tabular}{|c|c|c|c|c|c|}
        \hline
       Variable & Coefficient estimate & Standard error & t stat & p value  \\
        \hline
        $f_0$ & 2.29 & $2.13 \times 10^{-3}$ & 1080 &  0 \\
        $f_1$ & $0.483$ & $3.64 \times 10^{-4}$ & 1330 & 0 \\
        $b_0$ & 0.892 & $4.37 \times 10^{-4}$ & 2040 & 0 \\
        $b_1$ & 1.27 & $9.49 \times 10^{-4}$ & 1330 & 0 \\
        $b_2$ & 0.978 & $4.71 \times 10^{-4}$ & 2080 & 0 \\
        \hline
    \end{tabular}
    \caption{Table of regression results related to linear regression~\eqref{eq:regression}}
    \label{tab:regression}
\end{table}
\begin{figure}[b!]
    \begin{subfigure}{0.49 \textwidth}
        \centering
        \includegraphics[width = \linewidth]{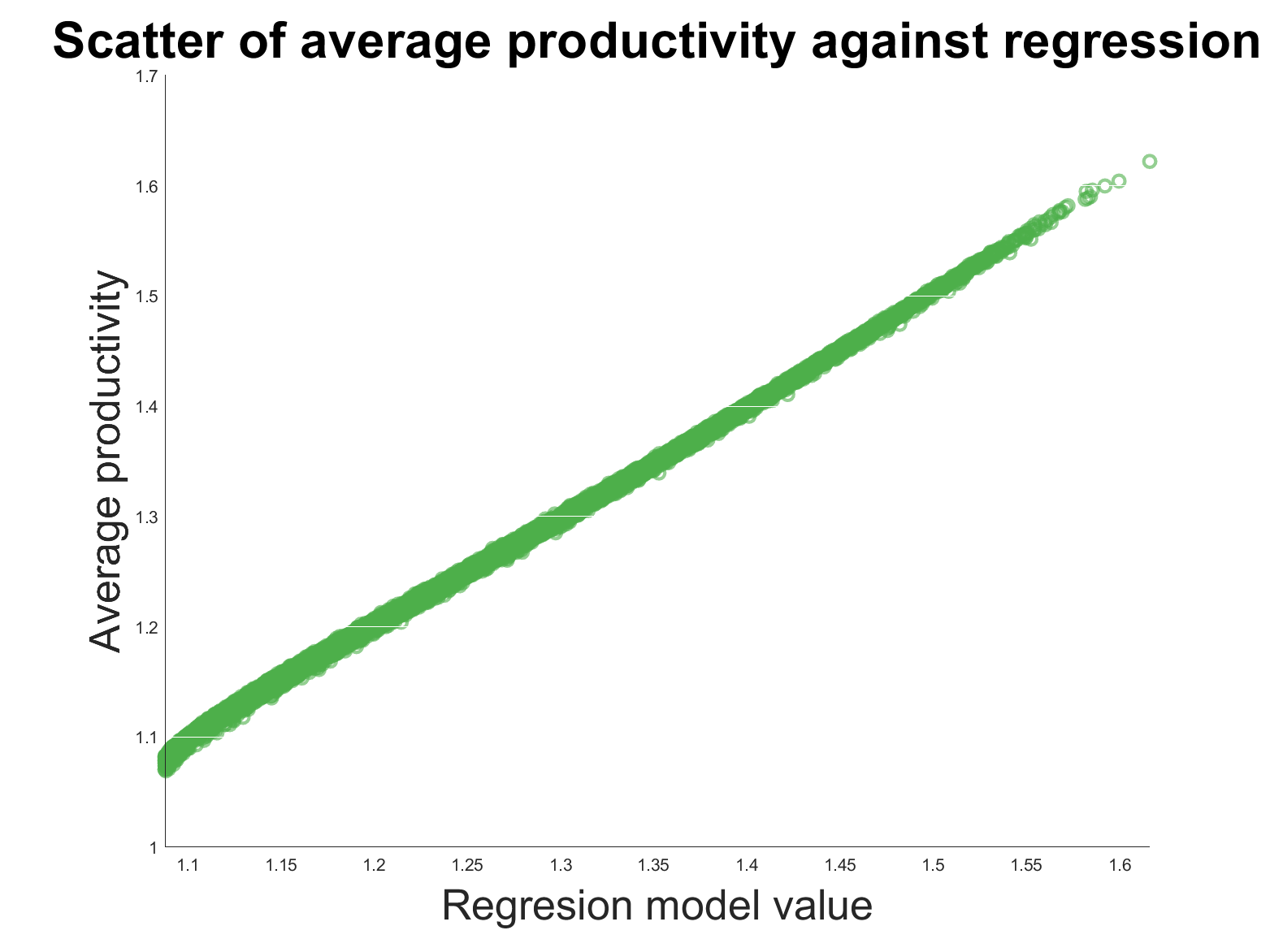}
        \caption{Plot of average productivity against right hand side of~\eqref{eq:regression}, with optimal values for $f_i,b_i$}
        \label{fig:scat-plot-5}
    \end{subfigure}
    \begin{subfigure}{0.49 \textwidth}
        \centering
        \includegraphics[width = \linewidth]{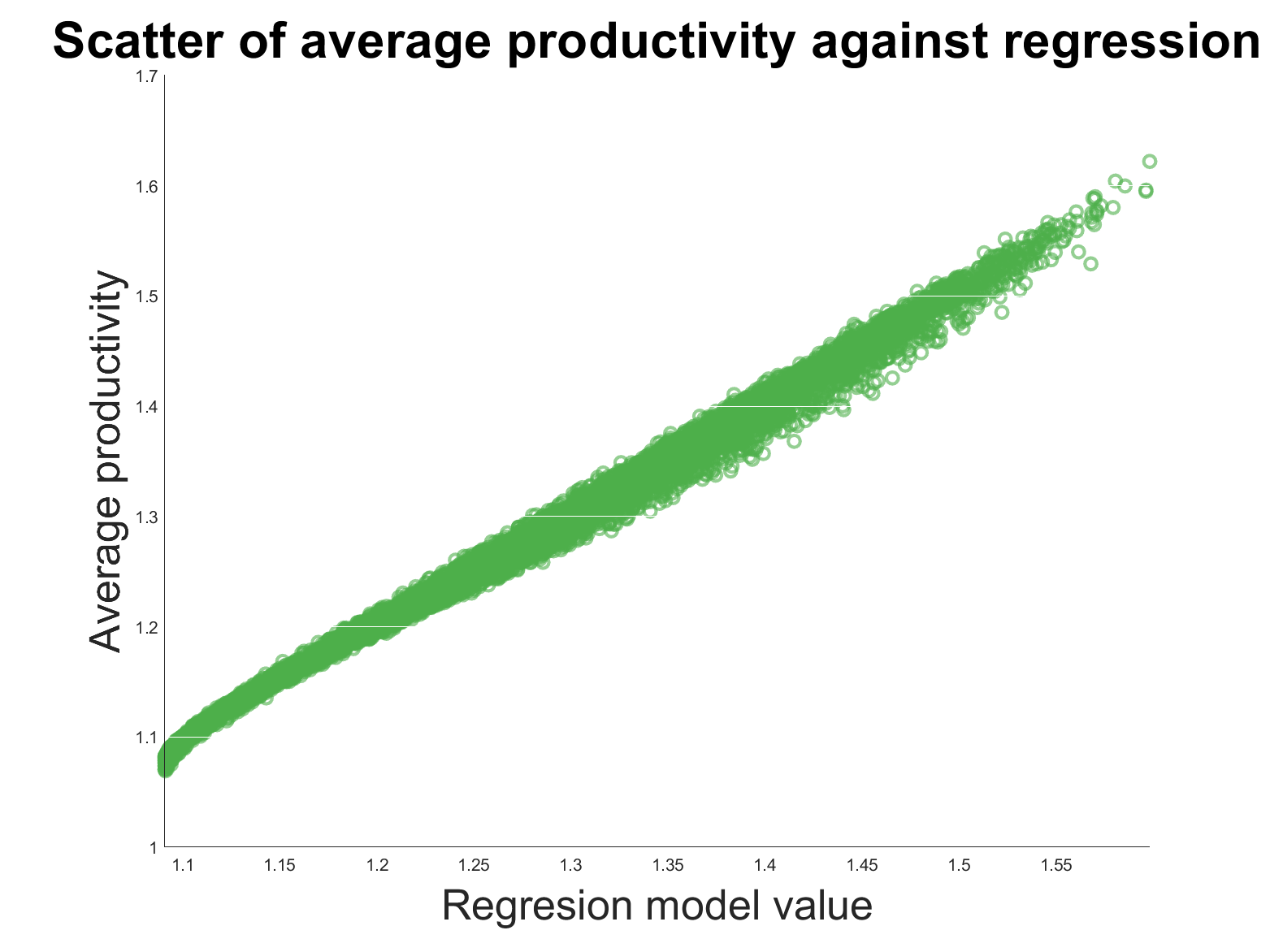}
        \caption{Plot of average productivity against right hand side of~\eqref{eq:k-relation-false}, with optimal values for $\bar{f}_0,\bar{b}_i$}
        \label{fig:scat-plot-6}
    \end{subfigure}
    \caption{Plots of average productivity against~\eqref{eq:regression} with models for $k_{\ell}$ given by considering direct and indirect spillovers~\eqref{eq:k-relation-2} or only direct spillovers~\eqref{eq:k-relation-false}}
    \label{fig:scat-plot-56}
\end{figure}
\appendix
\section{Numerical Methods} \label{sec:appendix}
The numerical method we designed to solve~\eqref{eq:mfg-model} is informed by the structure of the proof of existence and uniqueness. The method of proof relies on the contraction mapping theorem to find a fixed point of the map $\Phi$, defined in Definition~\ref{def:mfg-phi}. We are also required to solve a fixed point problem to find the value of the parameter $B$. In light of this, our numerical method proceeds as follows, after choosing an initial guess $k^0 \in [0,\infty)^L$, $B^0 \in [0,\infty)$ and tolerances $\delta_1,\delta_2$.

\begin{enumerate}
	\item Given $k^i \in [0,\infty)^L$ and $B^i \in [0,\infty)$, solve~\eqref{eq:mfg-alt-HJB},~\eqref{eq:mfg-alt-HJB-bc} using the following method, based on a Newton--Raphson method in a Banach space.
	\begin{enumerate}
		\item Define $F(v) = - \frac{\sigma^2}{2} v'' + \rho v - k v' - (1 - \gamma) \left( \frac{\gamma}{w} \right)^{\frac{\gamma}{1 - \gamma}}(v')^{\frac{1}{1 - \gamma}} - \frac{z^{\alpha}}{B^{\alpha - 1}}$. We want to find zeros of $F(v)$.
		\item We define $dF(v)(u) = - \frac{\sigma^2}{2} u'' + \rho u - k u' - \left(\frac{\gamma}{w} v'\right)^{\frac{\gamma}{1 - \gamma}} u'$, which is the Fr\'echet derivative of $F$.
		\item Denote by $V_0^{k^i_{\ell},B^i}$ the initial guess for the $\ell$th component of the solution to~\eqref{eq:mfg-alt-HJB},~\eqref{eq:mfg-alt-HJB-bc} with $k = k^i_{\ell}$ and $B = B^i$.
		\item Given $V_n^{k^i_{\ell},B^i}$, we compute the next iteration, $V_{n+1}^{k^i_{\ell},B^i}$, using a Newton--Raphson method: $V_{n+1}^{k^i_{\ell},B^i} = V_n^{k^i_{\ell},B^i} - dF\left(V_n^{k^i_{\ell},B^i}\right)^{-1}\left(F\left(V_n^{k^i_{\ell},B^i}\right)\right)$.
		\item Continue iteratively until $\left\|F\left(V_n^{k^i_{\ell},B^i}\right)\right\|_1  \leq \delta_1$ and define $V^{i,\ell} = V_n^{k^i_{\ell},B^i}$
	\end{enumerate}
	\item Given $V^{i}$, compute the solution to~\eqref{eq:mfg-alt-FP} using~\eqref{eq:mfg-alt-FPsol} and denote it by $m^i$
	\item Define $k^{i+1} = \Phi\left(k^i\right)$ and ${B^{i+1} = \left[\sum_{\ell = 1}^L A_{\ell} \int_{\Omega} z^{\alpha} m_{\ell}(z)~dz\right]^{\frac{1}{1 - \alpha}}}$
	\item If $\left\|k^{i+1} - k^i\right\|_1 + |B^{i+1} - B^i| \leq \delta_2$ then stop the iteration process and define the MFG solution $(m,V) = \left(m^i,V^i\right)$. Otherwise return to Step 1.
\end{enumerate}

\bibliographystyle{siamplain}
\bibliography{MFG innovation arxiv}

\end{document}